\numberwithin{equation}{section}
\newtheorem{theorem}{Theorem}[section]
\newtheorem{proposition}[theorem]{Proposition}
\newtheorem{lemma}[theorem]{Lemma}
\newtheorem{definition}[theorem]{Definition} 
\newtheorem{corollary}[theorem]{Corollary}
\theoremstyle{remark}
\definecolor{darkblue}{rgb}{0,0,0.7}
\definecolor{darkred}{rgb}{0.6,0,0}
\newcommand{\al}{\alpha}
\newcommand{\de}{\delta}
\newcommand{\e}{\varepsilon}
\newcommand{\ga}{{\gamma}}
\newcommand{\la}{\lambda}
\newcommand{\si}{\sigma}
\newcommand{\td}{\tilde}
\newcommand{\De}{\Delta}
\newcommand{\Bp}{{\dot B_{p,\I}^{-1+3/p}}}
\newcommand{\BMOD}{\text{BMO}^{-1}}
\newcommand{\R}{{\mathbb R }}
\newcommand{\Kp}{{\mathcal{K}_p}}
\newcommand{\N}{{\mathbb N}}
\newcommand{\Z}{{\mathbb Z}}
\newcommand{\cN}{{\mathcal N}}
\newcommand{\cK}{{\mathcal K}}
\newcommand{\pd}{{\partial}}
\newcommand{\nb}{{\nabla}}
\newcommand{\I}{\infty}
\renewcommand{\div}{\mathop{\mathrm{div}}}
\newcommand{\supp}{\mathop{\mathrm{supp}}}
\newcommand{\donothing}[1]{{}}
\newcommand{\EQ}[1]{\begin{equation}\begin{split} #1 \end{split}\end{equation}}
\newcommand{\EQN}[1]{\begin{equation*}\begin{split} #1 \end{split}\end{equation*}}
\DeclareMathOperator*{\esssup}{ess\,sup}
\newcommand{\loc}{\mathrm{loc}}
\newcommand{\uloc}{\mathrm{uloc}}
\begin{document}
\title{
Asymptotic properties of discretely self-similar Navier-Stokes solutions with rough data}
\author{Zachary Bradshaw\footnote{
Zachary Bradshaw, Department of Mathematical Sciences, University of Arkansas, Fayetteville, AR,  USA;
e-mail: \url{zb002@uark.edu}
} \and Patrick Phelps\footnote{Patrick Phelps, Department of Mathematics, Temple University, Philadelphia, PA, USA;
 e-mail: \url{patrick.phelps0001@temple.edu}}
 } 
\date{\today}
\maketitle 
 
\begin{abstract}
In this paper we explore the extent to which discretely self-similar (DSS) solutions to the 3D Navier-Stokes equations with rough data \textit{almost} have the same asymptotics as DSS flows with smoother data.
 In a previous work \cite{BP1}, we established algebraic spatial decay rates for data in $L^q_\loc(\R^3\setminus\{0\})$ for $q\in (3,\I]$. The optimal rate occurs when $q=\I$ and rates degrade as $q$ decreases. In this paper, we show that these solutions can be further decomposed into a term satisfying the optimal $q=\I$ decay rate---i.e.~have asymptotics like $(|x|+\sqrt t)^{-1}$---and a term with the $q<\I$ decay rate multiplied by a prefactor which can be taken to be arbitrarily small. This smallness property is new and implies the $q<\I$ asymptotics should be understood in a little-o sense. 
 The decay rates in \cite{BP1}   break down when $q=3$, in which case spatial asymptotics have not been explored. The second result of this paper shows that DSS solutions with data in $L^3_\loc(\R^3\setminus\{0\})$ can   be expanded into a term satisfying the $(|x|+\sqrt t)^{-1}$ decay rate and a  term that can be taken to be arbitrarily small in a scaling invariant class.  A Besov space version of this result is also included.

\end{abstract}

\tableofcontents

\section{Introduction}
In this paper, we continue work in \cite{BP1} developing  asymptotic properties of discretely self-similar Navier-Stokes flows. The  Navier-Stokes equations model the velocity $u:\R^3\times(0,\I) \to \R^3$ and pressure $p:\R^3\times(0,\I) \to \R$ of a viscous incompressible fluid and can be written as
\EQ{\label{eq:NS}
\pd_t u -\De u +u\cdot \nb u + \nb p =0, \quad \div u =0,
}
where we have taken forcing to be zero and viscosity to be unitary. This is paired with the initial condition
\EQ{
	u(\cdot, 0) = u_0, \quad \div u_0 = 0.
}
All equations are understood distributionally. 
 The Navier-Stokes equations have a parabolic scaling property:
If $u$ is a solution with associated pressure $p$, then the pair $u^\la$, $p^\la$ defined by
\EQ{u^\la(x,t):= \la u(\la x, \la^2 t), \,\, p^\la(x,t):= \la^2 p(\la x, \la^2 t),}
also solves \eqref{eq:NS}. We investigate discretely self-similar (DSS) solutions which have the property $(u,p)=(u^\la,p^\la)$ for at least some $\la>1$. Data is called DSS if the same property holds with the time variable omitted. The class of self-similar solutions is a stronger class for which this property holds for all $\la>0$. These classes are interesting cases to investigate non-uniqueness in the Leray-Hopf class \cite{Jia-uniqueness,GuillodSverak,ABC}, and to explore the possible  failure of eventual regularity of Lemari\'e-Rieusset-style solutions with data in ultracritical classes \cite{BT1}. The existence of self-similar and DSS solutions is known in a variety of cases \cite{AB,Barraza,BT1,BT3,BT5,CP,Chae-Wolf,GiMi,JS,Kato,KT-SSHS,LR2,Tsai-DSSI,FDLR}, of which we are most concerned with large DSS data in $L^{3,\I}$ \cite{BT1}, the critical Besov spaces \cite{BT3,AB} and  $L^2_\loc$ \cite{Chae-Wolf,BT5,FDLR}. The existence of DSS solutions in the Koch-Tataru space $\BMOD$ \cite{KT} is not known---this is an interesting and apparently difficult open problem.

Regularity for self-similar solutions is well-known. In \cite{Grujic}, Gruji\'c gives an elegant argument that any forward self-similar suitable weak solution is smooth. Gruji\'c's argument does not hold in general for DSS solutions. In some cases smoothness is known, e.g.,~for $u_0\in L^{3,\I}$ and $\la$ close to $1$ \cite{KMT} or for small data $u_0\in L^2_\loc$ \cite{BT8}. In \cite{KMT2}, the $\la$-close-to-$1$ argument is extended to a space marginally smaller than $L^2_\uloc$. When $\la$ is not close to $1$, using local smoothing DSS solutions in the local energy class with data in $E^2$ (see Section 2.4) are shown to be regular on the set
\[\{(x,t)\in\R^3\times (0,\I): |x|\ge R_0\sqrt t\},\]
where $R_0$, the radius of far-field regularity, depends on the data. Note that $L^{3,\I}\subset E^2$.

In \cite{BP1}, algebraic decay rates are established for DSS local energy solutions with data $u_0\in L^q_\loc(\R^3\setminus \{0\})$, $q>3$.  The optimal decay of $(|x|+\sqrt t)^{-1}$  is achieved for $q=\I$. This is optimal in the sense that, even if the data is more regular, e.g., locally H\"older continuous away from $x=0$, the leading order spatial asymptotics remain $O((|x|+\sqrt t)^{-1})$.  The main result about decay rates for $L^q_\loc(\R^3\setminus \{0\})$ in \cite{BP1} is as follows.

\begin{theorem}[Bradshaw \& Phelps \cite{BP1}]  \label{theorem:Lpdecay1} Let $q\in (3,\I]$ and assume ${u_0\in L^q_\loc (\R^3\setminus \{0\})}$ is divergence free and $\la$-DSS. Assume $u$ is a DSS local energy solution with initial data $u_0$.
 It follows that:
\begin{enumerate}
\item For  $|x|\geq R_0\sqrt t$,
\EQ{\label{ineq:pointwiseBound.q}|u(x,t)|\lesssim_{u_0,q,\la} \frac{1}{\sqrt{t}^{\frac{3}{q}} \left( |x|+\sqrt{t}\right) ^{1-\frac{3}{q}}}.} 
\item For the same selection of $x$ and $t$, the difference $\td u := u-e^{t\De}u_0$ satisfies,  
\EQ{\label{ineq:improved.decay.for.difference}
|\td u(x,t)| \lesssim_{u_0,q,\la} \frac{1}{\sqrt t^{\frac{6}{q}-1} (|x|+\sqrt{t})^{2-\frac{6}{q}}}.}  
\end{enumerate}
\end{theorem}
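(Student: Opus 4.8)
The plan is to pass to a compact time interval using the discrete scaling symmetry, estimate the linear evolution $e^{t\De}u_0$ by hand, and then bootstrap the Duhamel formula for $\td u=u-e^{t\De}u_0$, splitting the nonlinearity into a ``far'' piece where $u$ is classical and decays pointwise and a ``core'' piece near the space--time origin where only the local energy bound is available.

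\textbf{Reductions and the linear term.} Writing $A_k=\{\la^k\le |x|<\la^{k+1}\}$, the relation $u_0(x)=\la u_0(\la x)$ gives $\|u_0\|_{L^q(A_k)}\lec \la^{k(\frac3q-1)}$, hence (using $q>3$) $u_0\in E^2$ with $\|u_0\|_{L^2(B_r)}^2\lec r$ as $r\to0$; in particular $u$ is a genuine DSS local energy solution, classical on $\{|x|\ge R_0\sqrt t\}$, and the local energy inequality together with the DSS scaling yields $\|u(\cdot,t)\|_{L^2(B_{R_0\sqrt t})}^2\lec \sqrt t$. Since $u$ and $e^{t\De}u_0$ are both $\la$-DSS, so is $\td u$, and scaling by an appropriate power of $\la$ reduces \eqref{ineq:pointwiseBound.q}--\eqref{ineq:improved.decay.for.difference} to showing $|u(y,s)|\lec (1+|y|)^{-(1-\frac3q)}$ and $|\td u(y,s)|\lec (1+|y|)^{-(2-\frac6q)}$ for $s$ in a fixed compact interval and $|y|\ge R_0$. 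For the linear term one splits the Gaussian convolution over the annuli $A_k$: the band $|z|\sim|y|$ is controlled by H\"older with $\|\Phi_s\|_{L^{q'}}\lec1$ against $\|u_0\|_{L^q(A_k)}$, which produces the claimed rate, and the remaining tails are exponentially small in $|y|$ because $\int_{A_k}|u_0|\lec\la^{2k}$. Undoing the scaling gives \eqref{ineq:pointwiseBound.q} for $e^{t\De}u_0$.

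\textbf{The bootstrap.} From the representation formula valid for DSS local energy solutions, $\td u(t)=-\int_0^t e^{(t-s)\De}\,\mathbb P\nb\cdot(u\otimes u)(s)\,\ind s$, with the Oseen kernel satisfying $|(e^{\si\De}\mathbb P\nb\cdot)(x)|\lec(|x|+\sqrt\si)^{-4}$. Fix $|x|\ge 2R_0\sqrt t$ and split the $y$-integral at $|y|=R_0\sqrt s$. On $|y|<R_0\sqrt s$ one has $|x-y|\gtrsim|x|$, so this piece is bounded by $|x|^{-4}\int_0^t\|u(\cdot,s)\|_{L^2(B_{R_0\sqrt s})}^2\,\ind s\lec |x|^{-4}t^{3/2}$, already stronger than the target. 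On $|y|\ge R_0\sqrt s$ one inserts an a priori far-field bound $|u(y,s)|\lec s^{-\frac{1-a}{2}}(|y|+\sqrt s)^{-a}$ — the DSS-homogeneous profile $g_a$ of spatial order $a$; the dominant contribution comes from $|y|\sim|x|$ and reduces to $|x|^{-2a}\int_0^t(t-s)^{-1/2}s^{-(1-a)}\,\ind s$, which converges since $a>0$ and equals a constant times $t^{a-\frac12}|x|^{-2a}=g_{2a}(x,t)$, while $|y|\ll|x|$ and $|y|\gg|x|$ contribute a smaller power of $|x|$ (by a factor $\sqrt t/|x|$). Thus $|u|\lec g_a$ implies $|\td u|\lec g_{2a}$, hence $|u|\le |e^{t\De}u_0|+|\td u|\lec g_{\min(1-\frac3q,\,2a)}$. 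Starting from any positive $a^{(1)}$ — obtained from the decay of $u_0$: the local energy of $u$ inherits, via the pressure decomposition and local regularity, the annular decay of the data, and $\e$-regularity converts this into a pointwise far-field rate — and iterating $a\mapsto \min(1-\frac3q,2a)$ reaches $a=1-\frac3q$ after finitely many steps, which is \eqref{ineq:pointwiseBound.q}; feeding this back once more gives $|\td u|\lec g_{2-\frac6q}$, which is \eqref{ineq:improved.decay.for.difference}. (The borderline range $R_0\sqrt t\le|x|\le 2R_0\sqrt t$ follows from $|\td u|\le|u|+|e^{t\De}u_0|$ and the unit-scale bound $|u|\lec1$ in the far field.)

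\textbf{Main obstacle.} The crux is the interaction between the core and the far-field in the bilinear term, combined with the time integration near $s=0$. In $B_{R_0\sqrt s}$ only the $L^2$ local energy is available, so one must absorb it entirely through the decay and smoothing of the Oseen kernel; and the far-field piece must be integrated down to $s=0$, where the only bound on $u$ — coming from $e^{s\De}u_0$, which genuinely blows up as $s\to0$ since $u_0\notin L^\infty_\loc$ — behaves like $s^{-(1-a)/2}$. The integral $\int_0^t(t-s)^{-1/2}s^{-(1-a)}\,\ind s$ is precisely what requires $a>0$, i.e.\ $q>3$, and it diverges at $q=3$; this is the exact mechanism by which the rates of \cref{theorem:Lpdecay1} degrade and finally break down as $q\downarrow3$. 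A further technical layer is justifying the Duhamel representation and the pressure bounds for DSS local energy solutions and verifying that the off-diagonal spatial regions in the bilinear integral are genuinely lower order.
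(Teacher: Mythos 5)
This theorem is quoted from \cite{BP1}; the present paper contains no proof of it, so the comparison is against the toolkit the paper recalls from that reference: the heat-flow decay estimate of Lemma \ref{lemma:heat.equation.pointwise.decay} (= \cite[Lemma 3.1]{BP1}), the convolution estimate of Lemma \ref{lemma.intbound1} (= \cite[Lemma 2.8]{BP1}), the far-field regularity radius $R_0$ for local energy solutions with $E^2$ data, and the fact that local energy solutions are mild \cite{BT7}. Your architecture---DSS reduction to $t\in[1,\la^2]$, annular estimation of $e^{t\De}u_0$, Duhamel for $\td u$ with a core/far splitting, the local energy bound on $B_{R_0\sqrt{s}}$, and Lemma \ref{lemma.intbound1} on the far region---is exactly this toolkit, and your identification of $\int_0^t (t-s)^{-1/2}s^{-(1-a)}\,ds$ as the mechanism that forces $q>3$ is correct.

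The genuine gap is the initialization of your bootstrap $a\mapsto\min\bigl(1-\tfrac3q,\,2a\bigr)$. What far-field regularity plus DSS scaling actually hands you is $|u(x,t)|\lesssim 1/\sqrt t$ on $\{|x|\ge R_0\sqrt t\}$, i.e.\ the profile with $a=0$, and your own framework shows that the very first Duhamel step then diverges: $\int_0^t(t-s)^{-1/2}s^{-1}\,ds=\I$. The entire nonlinear estimate therefore rests on producing some $a^{(1)}>0$, and you dispose of this in a parenthesis. That step is where the real work lives: one must propagate the annular decay $\|u_0\|_{L^2(B_1(x_0))}\lesssim \la^{k(3/q-1)}$ for $x_0\in A_k$ to the local energy of $u$ on far-field unit cylinders (which requires controlling the nonlocal pressure there), and then convert the resulting smallness of the scale-invariant quantity $\iint_{Q_1(x_0,t)}|u|^3+|p|^{3/2}$ into a pointwise bound via the \emph{quantitative} form of $\e$-regularity, i.e.\ the version whose output is proportional to the input rather than merely $O(1)$. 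Carried out properly, this delivers $|u|\lesssim \sqrt t^{-3/q}(|x|+\sqrt t)^{-(1-3/q)}$ on the far field in one stroke, so that part 1 needs no bootstrap at all and part 2 follows from a single application of Lemma \ref{lemma.intbound1} with exponents $a=2-\tfrac6q$ and $b=\tfrac6q$, which is precisely \eqref{ineq:improved.decay.for.difference}. Your iteration scheme is a legitimate alternative once $a^{(1)}>0$ is in hand, but as written the proposal assumes the hardest estimate rather than proving it.
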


The estimate for $\td u$ was then pushed to higher Picard iterates. {Stronger bounds were also obtained for smoother initial data but these are not relevant to the present work.} Picard iterates are defined as follows: $P_0=e^{t\Delta}u_0$; $P_{k+1}= P_0+B(P_k,P_k)$, where 
\[
B(f,g)= -\frac 1 2 \int_0^t e^{(t-s)\Delta}\mathbb P \nb \cdot (f\otimes g + g\otimes f)\,ds.
\] 
This bilinear operator is further discussed  in Section 2. 
The second result in \cite{BP1} is the following theorem. 
\begin{theorem}[Bradshaw \& Phelps \cite{BP1}] \label{theorem:pwdecay}
Let $q\in (3,\I]$ and assume ${u_0\in L^q_\loc (\R^3\setminus \{0\})}$ is divergence free and $\la$-DSS for some $\la>1$. Assume $u$ is a $\la$-DSS local energy solution with initial data $u_0$. 
Define for $k\in \N_0$,
\[
a_k = (k+2)\bigg(1-\frac{3}{q}\bigg) = a_{k-1}+1-\frac 3 q;   \quad k_q=\left\lceil \frac {4q} {q-3}-2  \right\rceil.
\]
   The following hold:
\begin{enumerate}
\item 
We have  for $|x|\geq R_0\sqrt t$ and $k< k_q$ that 
	\EQ{|u-P_k|(x,t) \lesssim_{k,\la,R_0,u_0}\frac{\sqrt{t}^{a_k}}{\sqrt{t}(|x|+\sqrt{t})^{a_k}}.}
\item We have for $|x|\geq R_0\sqrt t$ and $k\geq k_q$ that 
\EQ{ | u-P_{k}|(x,t) \lesssim_{k,\la,R_0,u_0} \frac{\sqrt{t}^3}{(|x|+\sqrt{t})^4}.} 
\end{enumerate}
\end{theorem}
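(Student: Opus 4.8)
The plan is to induct on $k$ using the Picard recursion together with a pointwise bilinear decay estimate. On the far-field region $\{|x|\ge R_0\sqrt t\}$, where $u$ is smooth and obeys the mild formulation $u=P_0+B(u,u)$, subtracting $P_{k+1}=P_0+B(P_k,P_k)$ and using the symmetry of $B$ gives the telescoping identity
\EQN{
u-P_{k+1}=B(u,u)-B(P_k,P_k)=B(u,\,u-P_k)+B(u-P_k,\,P_k).
}
The base case $k=0$ is exactly \eqref{ineq:improved.decay.for.difference} (equivalently, the identity above with $P_{-1}:=0$ fed into the bilinear lemma below using the a priori bound \eqref{ineq:pointwiseBound.q}), so it suffices to run the inductive step: assuming the stated bound for $u-P_k$, estimate the two bilinear terms.

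The engine is an estimate of the following shape: if $|f(x,t)|\lesssim\sqrt{t}^{\,a-1}(|x|+\sqrt{t})^{-a}$ and $|g(x,t)|\lesssim\sqrt{t}^{\,b-1}(|x|+\sqrt{t})^{-b}$ on $\R^3\times(0,\I)$ (these are scaling-invariant profiles decaying like $(|x|+\sqrt t)^{-a}$, $(|x|+\sqrt t)^{-b}$ in the far field), then for $|x|\ge R_0\sqrt t$
\EQN{
|B(f,g)(x,t)|\lesssim\frac{\sqrt{t}^{\,c-1}}{(|x|+\sqrt{t})^{c}},\qquad c=\min(a+b,\,4),
}
the ceiling $c=4$ being forced by the parabolic homogeneity $-4$ of the kernel of $e^{t\De}\mathbb P\nb\cdot\,$. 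One proves this by inserting the kernel bound $|e^{(t-s)\De}\mathbb P\nb\cdot|(x-y,t-s)\lesssim(|x-y|+\sqrt{t-s})^{-4}$ and splitting the space-time integral according to $\{|y|\le|x|/2\}$, $\{|x-y|\le|x|/2\}$, $\{|y|,|x-y|\gtrsim|x|\}$ and $\{\sqrt s\le|x|\}$, $\{\sqrt s>|x|\}$; in each piece one factor is comparable to a constant and the rest is an elementary power computation (with the usual care near $s=t$, using a little better than $(|x-y|+\sqrt{t-s})^{-4}$ there). Securing the exponent addition $a+b$ \emph{together with} the hard ceiling at $4$ is, I expect, the main technical obstacle: essentially all the work lives here.

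Before the lemma applies one must deal with the nonlocality of $B$, namely the near-field $\{|y|<R_0\sqrt s\}$ where only local-energy rather than pointwise control on $u,P_k$ is available. There the DSS scaling turns the energy bound on a fixed compact space-time region into $\|u(\cdot,s)\|_{L^2(B_{R_0\sqrt{s}})}\lesssim s^{1/4}$, and for $|x|\ge R_0\sqrt t\ge R_0\sqrt s$ the kernel $(|x-y|+\sqrt{t-s})^{-4}$ is comparable to $(|x|+\sqrt t)^{-4}$ uniformly over $y\in B_{R_0\sqrt{s}}$; Cauchy--Schwarz then bounds the near-field contribution by $\int_0^t(|x|+\sqrt t)^{-4}\|u(\cdot,s)\|_{L^2(B_{R_0\sqrt{s}})}^2\ind s\lesssim\sqrt{t}^{\,3}(|x|+\sqrt t)^{-4}$, which is consistent with both conclusions. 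Control of $P_k$ near the origin comes inductively from $|P_k|\le|u|+|u-P_k|$ and the bounds already in hand. On the complementary region $\{|y|\ge R_0\sqrt s\}$ the inductive hypothesis gives $|u-P_k|\lesssim\sqrt{s}^{\,a_k-1}(|y|+\sqrt s)^{-a_k}$, while \eqref{ineq:pointwiseBound.q} gives $|u|\lesssim\sqrt{s}^{\,\beta-1}(|y|+\sqrt s)^{-\beta}$ with $\beta:=1-3/q$, and hence also $|P_k|\lesssim\sqrt{s}^{\,\beta-1}(|y|+\sqrt s)^{-\beta}$ since $a_k\ge\beta$.

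Feeding these into the bilinear lemma (after splitting off the near-field integral and majorizing), each of $B(u,u-P_k)$ and $B(u-P_k,P_k)$ is $\lesssim\sqrt{t}^{\,c-1}(|x|+\sqrt t)^{-c}$ with $c=\min(a_k+\beta,4)=\min(a_{k+1},4)$, and adding the near-field term $\sqrt{t}^{\,3}(|x|+\sqrt t)^{-4}$ changes nothing because $\sqrt{t}^{\,3}(|x|+\sqrt t)^{-4}\lesssim\sqrt{t}^{\,a-1}(|x|+\sqrt t)^{-a}$ whenever $a\le4$. Hence $|u-P_{k+1}|\lesssim\sqrt{t}^{\,a_{k+1}-1}(|x|+\sqrt t)^{-a_{k+1}}$ when $a_{k+1}\le4$, i.e. $k+1<k_q$, and $|u-P_{k+1}|\lesssim\sqrt{t}^{\,3}(|x|+\sqrt t)^{-4}$ once $a_{k+1}\ge4$, i.e. $k+1\ge k_q$ --- the definition $k_q=\lceil 4q/(q-3)-2\rceil$ being precisely the threshold $a_k\ge4$. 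This closes the induction and yields both statements; the implied constant grows with $k$ at each step, matching the $k$-dependence in the theorem.
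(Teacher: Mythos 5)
This statement is imported verbatim from \cite{BP1} and the present paper gives no proof of it, so the only basis for comparison is the cited source and the analogous machinery the paper does develop (Lemma \ref{lemma.intbound1} and the induction in Proposition \ref{proposition:PIdecomp2}). Your proposal is correct and follows essentially the same route as \cite{BP1}: the telescoping identity $u-P_{k+1}=B(u-P_k,u+P_k)$, the Oseen kernel bound fed into the convolution estimate with the hard ceiling at exponent $4$ (which is exactly Lemma \ref{lemma.intbound1}), and the near-field region $|y|<R_0\sqrt s$ handled by the DSS-scaled local energy bound --- the only cosmetic omission being that the kernel comparison $|x-y|\gtrsim|x|$ requires first working on $|x|\ge 2R_0\sqrt t$ and then absorbing the intermediate annulus, where the claimed bounds hold trivially.
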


As an application, the separation of hypothetical non-unique DSS solutions was bounded in \cite{BP1}. In particular, if $v$  is another DSS local energy solution with data $u_0$, then  for  $|x|\geq R_0\sqrt t$, 
\EQ{\label{lim:seperation.scale}
 | u-v |(x,t) \lesssim_{{q},\la,R_0,u_0} \frac{\sqrt{t}^3}{(|x|+\sqrt{t})^4}, 
} 
which is a sort of local stability away from $x=0$. A similar result was proven for general flows in \cite{BP2}. 

Our goals in this paper are to   to refine the asymptotics developed in \cite{BP1} when $q\in (3,\I)$ and explore what asymptotics are available when $q=3$.
Our first new result states that, for any $q>3$, the optimal $(|x|+\sqrt t)^{-1}$ decay rate holds up to an arbitrarily small multiple of $\sqrt t^{-\frac 3 q}(|x|+\sqrt t )^{\frac 3 q -1}$. 
   
\begin{theorem}[Little-o algebraic decay]  \label{thrm:Lpdecay1} Let $q\in (3,\I)$ and assume ${u_0\in L^q_\loc (\R^3\setminus \{0\})}$ is divergence free and $\la$-DSS for some $\la>1$. Assume $u$ is a $\la$-DSS local energy solution with initial data $u_0$.  {Let $R_0>0$ be the smallest number so that  $u$ is smooth on $|x|\geq R_0\sqrt t$.}  For any $\e>0$ there exists $C_{*}(\e,u_0)>0$ and  vector fields $a$ and $b$ so that  \[
u(x,t) = a(x,t)+b(x,t),
\]
and, for $|x|\ge R_0 \sqrt{t}$, 
\EQ{
|a(x,t)| \leq \frac {2\e} {\sqrt t^{\frac 3 q} (|x|+\sqrt t)^{1-\frac 3 q} };\quad |b(x,t)|  \leq \frac   {C_{*} (\e,u_0)  } {|x|+\sqrt t}.
}
Consequently,
\EQ{\label{lim:vanishing}
\lim_{r\to \I} \sup_{|x|\geq r\sqrt t} |u(x,t)|\sqrt t^{\frac 3 q} (|x|+\sqrt t)^{1-\frac 3 q} = 0.
}
Furthermore, the leading order asymptotic properties are captured by the Picard iterates $P_k$ in the sense that, letting $a_k =\min \{(k+1)(1-\frac 3q),4\}$, if  $k\in \N_0$  and $a_k<4$, then
\EQ{
\label{lim:vanishing3}\lim_{r\to \I} \sup_{|x|\geq r\sqrt t}  |u-P_k|(x,t) \sqrt t^{1-a_{k}} (|x|+\sqrt{t})^{a_{k}} = 0. 
}

\end{theorem}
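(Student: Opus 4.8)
The plan is to leverage Theorem~\ref{theorem:Lpdecay1} and Theorem~\ref{theorem:pwdecay} together with the scaling/periodicity structure of DSS solutions. First I would split $u = e^{t\De}u_0 + \td u$ using part (2) of Theorem~\ref{theorem:Lpdecay1}, so that $\td u$ already satisfies the better rate $\sqrt t^{1-6/q}(|x|+\sqrt t)^{6/q-2}$, which is $o$ relative to $\sqrt t^{-3/q}(|x|+\sqrt t)^{3/q-1}$ once $6/q-2 < 3/q-1$, i.e.\ $q>3$; absorbing $\td u$ entirely into $a$ (with room to spare) reduces matters to the linear evolution $e^{t\De}u_0$. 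The key point is then that $e^{t\De}u_0$ admits, for each fixed $\e$, a decomposition into a piece with the sharp $(|x|+\sqrt t)^{-1}$ decay and a remainder controlled by $\e\,\sqrt t^{-3/q}(|x|+\sqrt t)^{3/q-1}$. To produce this, I would exploit DSS self-similarity: on the solid torus $|x|\in[1,\la]$ the data $u_0 \in L^q_\loc(\R^3\setminus\{0\})$ has finite $L^q$ mass, and by density one can write $u_0 = u_0^{\mathrm{good}} + u_0^{\mathrm{bad}}$ on one fundamental annulus with $u_0^{\mathrm{good}}$ bounded (indeed $L^\I$, e.g.\ a truncation) and $\|u_0^{\mathrm{bad}}\|_{L^q}$ as small as we like, then extend both pieces DSS-homogeneously to all of $\R^3\setminus\{0\}$. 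Applying the known $q=\I$ pointwise heat bound to $e^{t\De}u_0^{\mathrm{good}}$ gives the $(|x|+\sqrt t)^{-1}$ term $b$, while applying the $L^q_\loc$ heat estimates from \cite{BP1} (the linear analogue behind \eqref{ineq:pointwiseBound.q}) to $e^{t\De}u_0^{\mathrm{bad}}$ yields a term bounded by $C\|u_0^{\mathrm{bad}}\|_{L^q}\,\sqrt t^{-3/q}(|x|+\sqrt t)^{3/q-1}$, which we make $\le \e\,\sqrt t^{-3/q}(|x|+\sqrt t)^{3/q-1}$ by choosing $\|u_0^{\mathrm{bad}}\|_{L^q}$ small; this forces $C_*(\e,u_0)$ to depend on $\e$ through the size of the complementary bounded piece, as claimed. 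The limit \eqref{lim:vanishing} follows because for $|x|\ge r\sqrt t$ the $b$-term contributes $O(1/r)\to 0$ while the $a$-term contributes $\le 2\e$ uniformly; sending $r\to\I$ and then $\e\to0$ gives the clean statement.

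For the Picard-iterate refinement \eqref{lim:vanishing3}, I would argue by induction on $k$ exactly as the iteration in Theorem~\ref{theorem:pwdecay} was run in \cite{BP1}, but tracking the $o(1)$ gain rather than just the exponent. The base case $k=0$ is \eqref{lim:vanishing} after noting $a_0 = 1-3/q$ and $P_0 = e^{t\De}u_0$. For the inductive step write $u - P_{k+1} = B(u,u) - B(P_k,P_k) = B(u-P_k, u) + B(P_k, u-P_k)$; feeding in the inductive bound $|u-P_k| \lesssim \delta\,\sqrt t^{a_k-1}(|x|+\sqrt t)^{-a_k}$ (with $\delta$ small, coming from the $\e$-smallness of the $a$-part propagated through the bilinear estimates) and the a~priori bound $|u|\lesssim (|x|+\sqrt t)^{-1}$ for the other factor, the bilinear-operator decay estimates from \cite{BP1}---which upgrade a product of decay rates $a$ and $1$ into decay rate $\min\{a+1-3/q,\,a+1,\,4\}$ on the good region---produce rate $a_{k+1} = \min\{a_k + 1 - 3/q,\,4\}$ with a prefactor still proportional to $\delta$. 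Here one must be slightly careful that the $a$-part of $u$ (which only has the poor $L^q$ rate) is the factor carrying the smallness, and the $b$-part (sharp $(|x|+\sqrt t)^{-1}$ rate, but $O(1)$ size) is what the bilinear estimate acts on with full strength; the two together still close because $B(b,b)$ has rate $4$ (hence is harmless once $a_k$ saturates) and $B(a,\cdot), B(\cdot,a)$ inherit the $\e$-smallness.

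The main obstacle I anticipate is making the DSS-homogeneous extension of the truncated data $u_0^{\mathrm{good}}, u_0^{\mathrm{bad}}$ genuinely compatible with the divergence-free constraint and with the linear heat estimates uniformly across scales: a crude truncation of $u_0$ on one annulus need not be divergence free, so one likely needs to either truncate with a Bogovskii-type corrector on the annulus (small in $L^q$, hence harmless) or, more cleanly, perform the good/bad split at the level of the Littlewood--Paley / Besov decomposition of $u_0$ as in the Besov-space version alluded to in the abstract, where the divergence-free structure and the DSS periodicity are both easy to preserve. A secondary technical point is ensuring the heat-kernel bound on $e^{t\De}u_0^{\mathrm{bad}}$ holds on the \emph{same} far-field region $|x|\ge R_0\sqrt t$ with constants independent of the split; this is where DSS periodicity is essential, since it reduces the estimate to a fixed compact range of scales and a standard kernel estimate, and the constant then depends only on $\|u_0^{\mathrm{bad}}\|_{L^q(\text{one annulus})}$, which is exactly the quantity we have made small.
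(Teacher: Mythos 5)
Your overall strategy for the first statement --- split $u_0$ on one fundamental annulus into a bounded piece and an $L^q$-small piece, extend by DSS scaling, and run the linear heat estimate on each --- is exactly the paper's Lemma \ref{lemma:L3wDataDecomp} and Lemma \ref{lemma:decomp.heat} (including the Bogovskii/Leray-projection fix for the divergence-free constraint that you flag as an obstacle). However, there is a genuine gap in the step ``absorbing $\td u$ entirely into $a$ (with room to spare).'' The bound \eqref{ineq:improved.decay.for.difference} gives $|\td u|\lesssim_{u_0,q,\la}\sqrt t^{1-6/q}(|x|+\sqrt t)^{6/q-2}$, and the ratio of this to the target rate $\sqrt t^{-3/q}(|x|+\sqrt t)^{3/q-1}$ is $\big(\sqrt t/(|x|+\sqrt t)\big)^{1-3/q}$. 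This tends to $0$ only as $|x|/\sqrt t\to\I$; on the \emph{fixed} region $|x|\ge R_0\sqrt t$ where the estimate $|a|\le 2\e\,\sqrt t^{-3/q}(|x|+\sqrt t)^{1-3/q\,-1}$ must hold pointwise, the ratio is merely bounded by $(1+R_0)^{3/q-1}$, and the prefactor $C(u_0,q,\la)(1+R_0)^{3/q-1}$ is not controlled by $\e$. Nor can $\td u$ be dumped into $b$: for $3<q<6$ one has $\sqrt t^{1-6/q}(|x|+\sqrt t)^{6/q-2}\gg(|x|+\sqrt t)^{-1}$ as $|x|/\sqrt t\to\I$. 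The repair is either (i) a further cutoff of $\td u$ at $|x|=R\sqrt t$ with $R=R(\e)$ large, sending the near part (bounded by $C(u_0)(1+R)^{6/q-1}(|x|+\sqrt t)^{-1}$) into $b$ and the far part (now genuinely $\e$-small at the target rate) into $a$; or (ii) the paper's route, which replaces $P_0$ by $P_k$ with $k\ge \frac{q}{q-3}-2$ so that $|u-P_k|\lesssim(|x|+\sqrt t)^{-1}$ by Theorem \ref{theorem:pwdecay} and can be absorbed into $b$ outright --- at the cost of having to split every Picard iterate $P_k$, not just $e^{t\De}u_0$, into a small part and an $(|x|+\sqrt t)^{-1}$ part (Proposition \ref{proposition:PIdecomp2}). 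Your proposal contains neither step.

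The same issue undermines your induction for \eqref{lim:vanishing3}: the hypothesis $|u-P_k|\lesssim\de\,\sqrt t^{a_k-1}(|x|+\sqrt t)^{-a_k}$ with $\de$ small holds only on $|x|\ge r\sqrt t$ for $r=r(\de)$ large, but the Duhamel integral in $B(u-P_k,u)$ ranges over all $y$, including the near region $|y|<r\sqrt s$ and indeed $|y|<R_0\sqrt s$ where no pointwise control on $u-P_k$ is available at all, so the smallness cannot simply be ``propagated through the bilinear estimates.'' The paper sidesteps this entirely: it writes $u-P_k=(u-P_{k+1})+(P_{k+1}-P_k)$, quotes the strictly better rate $a_{k+1}>a_k$ for $u-P_{k+1}$ from \cite{BP1} (making that term automatically $o$ of rate $a_k$ as $r\to\I$), and applies the bilinear machinery only to the $u$-independent difference $P_{k+1}-P_k$ of Picard iterates, where the cutoff trick of (i) above closes the argument.
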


\noindent{\bf Comments on \cref{thrm:Lpdecay1}:} 
\begin{enumerate}
\item Because self-similarity implies discrete self-similarity,  these observations apply to self-similar solutions as well. In that case, $R_0$ can be taken to be $0$ because self-similar local energy solutions are known to be bounded at positive times \cite{Grujic}. 


\item  
When $a_k\geq 4$ we do not get an improvement over \cite{BP1}. The reason for the exponent $4$ can be found in Lemma \ref{lemma.intbound1}.

\item These can be viewed as both statements about asymptotics as $|x|\to \I$ or, in the case of \eqref{lim:vanishing3}, asymptotics as $t\to 0$ for $x\neq 0$.  Indeed, \eqref{lim:vanishing3} justifies an asymptotic expansion along the lines of \cite[(3.14)]{BP2}. That is, letting $k_*$ denote the largest $k$ so that $a_k<4$, we have 
\EQ{\label{asymptoticExpansion}
u(x,t) = P_0 + \sum_{k=0}^{k_*-1} o\bigg(  \frac{\sqrt{t}^{a_k}}{\sqrt{t}(|x|+\sqrt{t})^{a_k}}  \bigg)   + O \bigg( \frac{\sqrt{t}^3}{(|x|+\sqrt{t})^4}  \bigg),
}
where the little-o and big-O asymptotics apply both as $|x|\to \I$ and as $t\to 0$ (in which case we require $x\neq 0$).
Note that Brandolese provides an asymptotic expansion for self-similar solutions in \cite{Brandolese}, but it is only for \textit{small} data and it depends on $u$.  The expansion \eqref{asymptoticExpansion}  and the corresponding expansion in \cite{BP1} are for large data and, up to the last term, are independent of $u$.

When the Navier-Stokes equations can be viewed as a perturbation of the heat equation, e.g.~in the regimes of Kato \cite{Kato} or Fabes, Jones and Riviere \cite{FJR}, the Picard iterates fully capture the asymptotics of $u$ at $t=0$. The solutions considered in this paper live outside of this regime. The statement \eqref{lim:vanishing3} and the expansion \eqref{asymptoticExpansion} assert that \textit{some} leading order asymptotics are nonetheless captured by Picard iterates.

\item If $v_0\in L^q(\R^3)$ for some $q\in (3,\I)$ and $v$ is Kato's mild solution \cite{Kato}, then 
\[
\sqrt t^{\frac 3 q}\| v(\cdot,t)\|_{L^q}=0. 
\]
Note that \eqref{lim:vanishing} implies that for any $\rho>0$,
\[   \lim_{t\to 0}  \sqrt t^{\frac 3 q} \| u(\cdot,t) \|_{L^q(\{|x|\geq \rho  \}} = 0.\]
This further illustrates the locally subcritical nature of these solutions away from $x=0$. Note that this is less a product of self-similarity than it is a result of of local smoothing.

\end{enumerate}

\bigskip
The algebraic decay rates appearing in the preceding theorems are for $u\in L^{q}_\loc(\R^3\setminus \{0\})\cap DSS$ where $q\in (3,\I]$.
If $u_0$ is DSS and only locally $L^3$ away from $0$, then, in analogy with the heat equation, we do not get an explicit pointwise algebraic decay rate for an ensuing DSS local energy solution (an example illustrating this for the heat equation appears in \cite{BT1}).  This is why Theorem \ref{thrm:Lpdecay1} does not include $q=3$. However, such solutions \textit{almost} enjoy the $O(|x|^{-1})$ pointwise decay rate as the next theorem states. 

Note that $u_0\in L^{3,\I}\cap DSS$ if and only if $u_0\in L^3_\loc(\R^3\setminus \{0\})\cap DSS$ \cite{BT1}. To be more precise, in \cite{BT1} Tsai and the first author showed that, if $u_0$ is $\la$-DSS then
\[
\int_{1\leq |x|\leq \la} |u_0|^3 \,dx\leq 3(\la-1)^2 \| u_0\|_{L^{3,\I}}^3,
\]
and 
\[
\|u_0\|_{L^{3,\I}}^3 \leq \frac {\la^3}{3(\la-1)} \int_{1\leq |x|\leq \la}|u_0|^3 \,dx,
\]
see \cite[(3.5) and (3.7)]{BT1}. By DSS scaling, the finiteness of $\| u_0\|_{L^3(\{1\leq |x|\leq \la\})}$ is equivalent to being in $L^3_\loc(\R^3\setminus \{0\})$. For these reasons, the following theorem can be viewed as providing a statement about asymptotics in the  endpoint case which is excluded from Theorem \ref{thrm:Lpdecay1}. Note that $u_0\in L^{3,\I}\cap DSS$ also implies that $u_0$ is in the homogeneous Herz space $\dot K^0_{3,\I}$ where 
\[
\| u_0\|_{\dot K^s_{p,q}} := \bigg\|  \la^{sk} \| u_0\|_{L^p(A_k)}    \bigg\|_{l^{q}(\Z)},
\]
where $A_k = \{x:\la^k \leq |x|<\la^{k+1}  \}$.
In general, $\dot K^0_{3,\I}\neq L^{3,\I}$, so these can be seen as distinct properties.

\begin{theorem}[Almost algebraic decay in $L^{3}_\loc(\R^3\setminus\{0\})\cap DSS$]\label{thrm:L3decay}
Suppose $u_0$ is divergence free, $\la$-DSS and belongs to $L^{3,\I}$. Let $u$ be a DSS local energy solution with data $u_0$. Then, for any $\e>0$, there exist values $R_0>0$ and $\si\in (0,1/2)$  and  DSS vector fields $a_1 $, $a_2$  and $b$ so that  \[
u(x,t) = a_1(x,t)+a_2(x,t)+b(x,t),
\]
for $|x|\ge R_0 \sqrt{t}$ and
\EQ{ \label{Exp1}
\sup_{0<t<\I}\|a_1\|_{ {\dot K^{0}_{3,\I}}} + \|a_1\|_{\mathcal K_{\I}} <  \e;\quad |a_2(x,t)| \leq \frac   {C_{**} (\e,u_0) } {|x|+\sqrt t};\quad  |b(x,t)|  \leq \frac   {C_{**} (\e,u_0,\si) \sqrt t^{2\si} } {(|x|+\sqrt t)^{1+2\si}},
}
for a constant $C_{**}$  and  where $R_0$ is as in  Theorem \ref{thrm:Lpdecay1} and $\mathcal K_p$ denotes a Kato class which we define in Section \ref{sec:mild.sol}. 
Additionally, for a different choice of $a_1$, $a_2$ and $b$, but still
for $|x|\ge R_0 \sqrt{t}$, we have 
\EQ{\label{Exp2}
\sup_{0<t<\I}\|a_1\|_{{L^{3,\I}}} +& \|a_1\|_{\mathcal K_{\I}} <  \e;\quad |a_2(x,t)| \leq \frac   {C_{**} (\e,u_0,\si) } {|x|+\sqrt t};\quad |b(x,t)|  \leq \frac   {C_{**} (\e,u_0) \sqrt t^{2\si} } {(|x|+\sqrt t)^{1+2\si}}.
} 
Throughout the above $a_1$ and $a_2$ always depend solely on $u_0$.
\end{theorem}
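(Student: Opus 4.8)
The plan is to leverage Theorem~\ref{thrm:Lpdecay1} together with a splitting of the initial data into a "good" part that lands in an $L^q$-type space with $q>3$ and a "small" part that stays in the scaling-critical class. More precisely, since $u_0 \in L^{3,\infty} \cap DSS$, for any $\e>0$ I would write $u_0 = u_0^{\mathrm{good}} + u_0^{\mathrm{small}}$, where $u_0^{\mathrm{good}}$ is obtained by truncating $u_0$ where $|u_0|$ is large (on the annulus $\{1 \le |x| \le \la\}$ and then extending by DSS scaling) so that $u_0^{\mathrm{good}} \in L^q_\loc(\R^3\setminus\{0\}) \cap DSS$ for every $q<\infty$, while $u_0^{\mathrm{small}}$ has small norm in $\dot K^0_{3,\infty}$ (respectively $L^{3,\infty}$). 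Both pieces are divergence-free after projecting with $\mathbb{P}$; one must be a little careful that the truncation is compatible with the divergence-free constraint, but this is handled by applying the Leray projector and absorbing the resulting error, or by truncating at the level of the stream function / Bogovskii-type correction on the annulus. The key point: $\|u_0^{\mathrm{small}}\|_{\dot K^0_{3,\infty}}$ can be made $< \e$ while $u_0^{\mathrm{good}}$ retains the full strength of the decay theory.

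Next I would set $a_1 := e^{t\Delta}u_0^{\mathrm{small}}$ (or, in the second expansion, a mild-solution-type object built from $u_0^{\mathrm{small}}$). Standard heat-semigroup estimates on Herz and Lorentz spaces give $\sup_{t>0}\|e^{t\Delta}u_0^{\mathrm{small}}\|_{\dot K^0_{3,\infty}} \lesssim \|u_0^{\mathrm{small}}\|_{\dot K^0_{3,\infty}} < \e$ and, since $\dot K^0_{3,\infty} \hookrightarrow$ (or interacts appropriately with) the Kato class $\mathcal K_\infty$ under the heat flow, $\|a_1\|_{\mathcal K_\infty} < \e$; the analogous $L^{3,\infty}$ statement is classical (Kato). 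Then I would write $u - a_1$ and observe that this is a DSS local energy solution associated with the data $u_0^{\mathrm{good}}$ up to controllable bilinear cross-terms: $u - e^{t\Delta}u_0^{\mathrm{small}}$ solves a perturbed Navier-Stokes system, and its large-$|x|$ behavior is governed by Theorem~\ref{thrm:Lpdecay1} applied to the $u_0^{\mathrm{good}}$ part. Concretely, for $|x| \ge R_0\sqrt t$ (with $R_0$ the far-field regularity radius — shared between the problems since $u_0^{\mathrm{good}}$ and $u_0^{\mathrm{small}}$ are both bounded away from the origin, or one enlarges $R_0$ harmlessly), part (1) of Theorem~\ref{thrm:Lpdecay1} gives a term of size $\sqrt t^{-3/q}(|x|+\sqrt t)^{-1+3/q}$ coming from the linear evolution of $u_0^{\mathrm{good}}$, which for $q$ close to $3$ we further split via the little-o decomposition (Theorem~\ref{thrm:Lpdecay1}, the $a$-$b$ split) into $a_2$ of size $(|x|+\sqrt t)^{-1}$ plus a remainder; and part (2), or the higher-Picard-iterate estimate (Theorem~\ref{theorem:pwdecay}), provides $b$ with the improved decay $\sqrt t^{2\si}(|x|+\sqrt t)^{-1-2\si}$ for a suitable $\si = \si(q) \in (0,1/2)$ — here one chooses $q>3$ close enough to $3$ that $2(1-3/q)$ lands in $(0,1)$, which pins down $\si$. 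The two expansions \eqref{Exp1} and \eqref{Exp2} differ only in whether $u_0^{\mathrm{small}}$ is measured in $\dot K^0_{3,\infty}$ or $L^{3,\infty}$, which in turn shifts which of $a_2$, $b$ carries the $\si$-dependence in its constant; the argument is otherwise identical.

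The main obstacle I anticipate is bookkeeping the bilinear cross-terms in the decomposition cleanly. Writing $u = (u - a_1) + a_1$ and insisting that $u-a_1$ behave like "the DSS solution with data $u_0^{\mathrm{good}}$" is only true modulo the interaction terms $B(a_1, u-a_1)$, $B(u-a_1, a_1)$, and $B(a_1,a_1)$ in the Duhamel formulation. One needs these to be (i) small in the relevant critical norm so they can be folded into $a_1$ (or a redefined $a_1$), and (ii) pointwise decaying at the rate of $b$ for $|x|\ge R_0\sqrt t$ so they can be folded into $b$. Step (i) should follow from smallness of $\|a_1\|_{\mathcal K_\infty}$ via the standard Kato/Koch-Tataru bilinear estimate in $\mathcal K_\infty$-type spaces, giving a fixed-point perturbation argument around $a_1$. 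Step (ii) is more delicate: it requires pointwise bilinear estimates away from the origin, of the flavor already developed in \cite{BP1} (the machinery behind Lemma~\ref{lemma.intbound1} and the $k_q$-iterate bounds), applied now to mixed pairs where one factor is only critically small rather than algebraically decaying. I expect the correct move is to iterate the pointwise Duhamel estimate: an input of size $(|x|+\sqrt t)^{-1}$ (critical, from $a_1$ and from the $a_2$-type piece) convolved against the Oseen kernel produces, after one bilinear step restricted to $|x|\ge R_0\sqrt t$, an output gaining a factor $\sqrt t^{2\si}(|x|+\sqrt t)^{-2\si}$ — exactly the structure of $b$ — with the small prefactor preserved. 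Verifying this gain, and that the smallness multiplies through correctly so that the constants $C_{**}$ depend only on $\e$ and $u_0$ (and $\si$ where indicated), is where the real work lies; everything else is an assembly of Theorem~\ref{thrm:Lpdecay1}, Theorem~\ref{theorem:pwdecay}, and classical heat-semigroup bounds on Herz and Lorentz spaces.
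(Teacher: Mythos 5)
Your overall strategy---split $u_0$ into a critically small piece plus a piece in $L^q_\loc(\R^3\setminus\{0\})$ for $q>3$, evolve the small piece, and treat the remainder with the decay theory of Theorem \ref{theorem:Lpdecay1}---is genuinely different from the paper's proof, and the step you yourself flag as ``where the real work lies'' is precisely where it breaks down. The obstruction is that $a_1=e^{t\Delta}u_0^{\mathrm{small}}$ is small only in critical norms with \emph{no spatial decay}: $\mathcal K_\I$-smallness means $|a_1(x,t)|\le \e/\sqrt t$ uniformly in $x$. Feeding this into the Duhamel cross-term via Lemma \ref{lemma.intbound1} with one factor $\e/\sqrt s$ (i.e.\ $a=1$, $b=1$ after pairing with a critically decaying factor $(|y|+\sqrt s)^{-1}$) returns exactly the critical rate $(|x|+\sqrt t)^{-1}$ with no gain; the supercritical output $\sqrt t(|x|+\sqrt t)^{-2}$ you invoke only arises when \emph{both} factors decay in space ($a=2$, $b=0$). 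Iterating does not help: every further application of $B(a_1,\cdot)$ reinserts a spatially non-decaying factor, so the cross-terms can at best be absorbed into $a_2$, never into $b$. Moreover $u-a_1$ is not a DSS local energy solution of \eqref{eq:NS} but of a perturbed system with a large remainder and a critical drift, so Theorems \ref{theorem:Lpdecay1} and \ref{theorem:pwdecay} do not apply to it as stated; one would have to redevelop the entire pointwise decay theory for the perturbed equation. The paper explicitly notes (in the discussion following the theorem statement) that the critical drift $a_1+a_2$ is what prevents the Picard-improvement argument of \cite{BP1} from running here---which is exactly the mechanism your plan relies on.

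The paper's actual route avoids this entirely. It does not perturb globally around a heat flow; instead it exploits the fact that DSS scaling spreads the fixed $L^3(A_1)$-mass of $u_0$ over annuli $A_k$ of growing volume, so that $\|u_0\|_{L^3(B_2(x_0))}<\gamma_{\mathrm{univ}}$ automatically for $x_0$ in a sufficiently distant annulus. It then invokes the Barker--Prange localized smoothing theorem (Theorem \ref{thrm.BPmain}) on a finite cover of that annulus: each localized datum (built with the Bogovskii map) generates a local strong solution $\td a_{x_i}$, split via Kato theory and Lemma \ref{lemma:Linfty} into a small-$L^3$ part and a bounded part, and the difference $u-\td a_{x_i}$ is parabolic-H\"older continuous down to $t=0$, where it vanishes on the inner ball. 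The $\sqrt t^{2\si}(|x|+\sqrt t)^{-1-2\si}$ bound on $b$ is nothing but this H\"older modulus at $t=0$, transported to all scales by DSS rescaling---it is an $\e$-regularity/compactness gain, not a Duhamel convolution gain, and that is the idea missing from your proposal.
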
 
 
This can be viewed as an endpoint case for our pointwise decay theorem---namely when the initial datum is in $L^3_\loc(\R^3\setminus \{0\})$ in contrast to $L^q_\loc(\R^3\setminus \{0\})$.  Essentially, it says that, ignoring an error that can be made arbitrarily small in a critical space, any DSS solution has the same spatial decay as a DSS local energy solution to \eqref{eq:NS} with data bounded by a multiple of $|x|^{-1}$.  The last term takes this further and says that, away from $x=0$, the initial data uniquely determines the time-asymptotics up to order $t^{\si}$ (excluding $t^{\si}$).

In Section \ref{sec:Besov} we will state and prove a generalization of Theorem \ref{thrm:L3decay} in the critical Besov spaces $\Bp$ where $p \in(3,\I)$.

Let us note that in \cite{BP1} we did not get   decay rates in the endpoint case $q=3$, which corresponds to $u_0\in L^{3,\I}$. The idea in \cite{BP1} is to subtract a uniquely determined vector field from a solution to \eqref{eq:NS} which captures leading order asymptotics. To some extent, Theorem \ref{thrm:Lpdecay1} shows that $a_1$ and $a_2$ capture these asymptotics. An important part of \cite{BP1} is to find subsequent fields which capture higher order asymptotics. It is not clear how to find such subsequent order terms in the present context because $b$ satisfies a perturbed equation with the \textit{critical} drift term $a_1+a_2$. The presence of this critical drift term prevents us from applying the Picard improvement argument used in \cite{BP1} to obtain an expansion like \eqref{asymptoticExpansion}.  
 
Because the vector fields $a_1$ and $a_2$ in Theorem \ref{thrm:L3decay} are determined by $u_0$ and, in particular, do not depend on $u$, Theorem \ref{thrm:L3decay}  provides a new upper bound on the difference of hypothetical non-unique flows. This can be viewed as a confinement of non-uniqueness which extends   \eqref{lim:seperation.scale}.  
\begin{corollary}
    Suppose $u_0$ is divergence free, $\la$-DSS and belongs to $L^{3,\I}$. Let $u$ and $v$ be DSS local energy solutions with data $u_0$. Let $R_0$ and $\si$ be as in Theorem \ref{thrm:L3decay}.
    Then, for $|x|\ge R_0 \sqrt{t}$ we have
    \[
    |u-v|(x,t) \lesssim_{u_0}  \frac  {\sqrt t^{2\si}} {(|x|+\sqrt t)^{1+2\si}}.
\]
\end{corollary}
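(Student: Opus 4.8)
The plan is to deduce the corollary directly from Theorem~\ref{thrm:L3decay}; the essential point is the claim made in its last sentence, namely that the fields $a_1$ and $a_2$ are constructed from the data $u_0$ (and the accuracy parameter $\e$) alone and are blind to which local energy solution is being expanded.

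First I would fix a single convenient value of the accuracy parameter, say $\e=1$, and apply Theorem~\ref{thrm:L3decay} in the form \eqref{Exp1} to each of $u$ and $v$. For $u$ this yields a far-field radius $R_0^u$, an exponent $\si_u\in(0,1/2)$ and a decomposition $u=a_1+a_2+b_u$ valid on $\{|x|\ge R_0^u\sqrt t\}$, with $\sup_{0<t<\I}\|a_1\|_{\dot K^{0}_{3,\I}}+\|a_1\|_{\mathcal K_{\I}}<1$, $|a_2(x,t)|\le C_{**}(u_0)/(|x|+\sqrt t)$ and $|b_u(x,t)|\le C_{**}(u_0,\si_u)\,\sqrt t^{2\si_u}/(|x|+\sqrt t)^{1+2\si_u}$. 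Applying the theorem to $v$ with the same $\e=1$ gives $R_0^v$, $\si_v$ and $v=a_1+a_2+b_v$ with the \emph{same} $a_1$ and $a_2$ and the analogous bound on $b_v$. Subtracting and using that $a_1,a_2$ cancel, we obtain $u-v=b_u-b_v$ on the common region $\{|x|\ge R_0\sqrt t\}$ with $R_0:=\max\{R_0^u,R_0^v\}$.

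It then remains to reconcile the two decay exponents. Set $\si:=\min\{\si_u,\si_v\}\in(0,1/2)$. On the far-field region $\sqrt t\le|x|+\sqrt t$, so $\sqrt t/(|x|+\sqrt t)\in(0,1]$ and raising it to the larger power $2\si_u$ (resp.\ $2\si_v$) only makes it smaller; hence the bounds for $b_u$ and $b_v$ continue to hold with $\si$ in place of $\si_u$ and $\si_v$. Therefore, for $|x|\ge R_0\sqrt t$,
\[
|u-v|(x,t)\le|b_u(x,t)|+|b_v(x,t)|\lesssim_{u_0}\frac{\sqrt t^{2\si}}{(|x|+\sqrt t)^{1+2\si}},
\]
the implicit constant being $C_{**}(u_0,\si_u)+C_{**}(u_0,\si_v)$, which depends only on $u_0$ once $\e$ is fixed. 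This is the claimed estimate, with the $R_0$ and $\si$ in the statement being the maximum and minimum just formed.

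The only point I would treat carefully---and the one place a subtlety could hide---is the solution-independence of $a_1$ and $a_2$: the whole argument rests on these fields cancelling in $u-v$, which requires that two solutions with the same data receive literally the same $a_1$ and $a_2$. This is exactly the content of the last sentence of Theorem~\ref{thrm:L3decay}, and the construction carried out in its proof should be an explicit linear/bilinear recipe in $u_0$ (a truncated or low-frequency piece of $e^{t\Delta}u_0$ together with a correction determined by $u_0$), making the cancellation automatic. Everything else is the routine bookkeeping of taking a max of two radii and a min of two exponents.
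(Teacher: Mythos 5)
Your proposal is correct and is essentially the paper's own argument: the paper's entire proof is ``Write $u-v$ as $u-a_1-a_2+a_1+a_2-v$ and use the triangle inequality,'' relying exactly on the solution-independence of $a_1$ and $a_2$ that you identify as the crux. Your extra bookkeeping (taking the maximum of the two radii and the minimum of the two exponents $\si_u,\si_v$, using that $\sqrt t/(|x|+\sqrt t)\le 1$) is a sensible elaboration of details the paper leaves implicit.
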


\begin{proof}Write $u-v$ as $u-a_1-a_2 + a_1+a_2-v$ and use the triangle inequality.
\end{proof}

\bigskip \noindent \textbf{Organization.} In Section 2 we introduce preliminary items including definitions of terms that have appeared in the theorems. In Section 3 we establish splitting results for initial data and solutions to linear problems. In Section 4 we prove the main theorems. In Section 4 we also state and prove an extension of Theorem \ref{thrm:L3decay} in the Besov spaces $\Bp$ for $p\in (3,\I)$.

\section{Solution classes}\label{sec:solutionclasses}

\subsection{Classical function spaces}

Here we introduce function spaces which play an important role in this paper. Solutions in these spaces will then be discussed.

The $L^p$ spaces and $L^p_\loc$ classes are defined in the classical way. We also utilize the shorthand \[L^p_T X :=L^p(0,T;X).\]Uniformly local versions are denoted $L^p_\uloc$ and defined by finiteness of the norm
\[
\| f\|_{L^p_\uloc} :=\sup_{x_0\in \R^3} \| f\|_{L^p(B_1(x_0))}.
\]We denote by $E^p$ the closure of $C_c^\I$ in $L^p_\uloc$. This class is characterized by the condition
\[
\lim_{R\to \I} \| f\|_{L^p_\uloc(\R^3\setminus B_R)}=0.
\]
For $1\leq p<\I$, the endpoint Lorentz spaces $L^{p,\I}$ are defined by finiteness of the quasinorm
\[
\| f\|_{L^{p,\I}} := \sup_{\si>0} \si^p | \{x:\si<|f(x)|  \}|. 
\]
Note that these spaces correspond to the weak-$L^p$ spaces.

\subsection{Littlewood-Paley} We refer the reader to \cite{BCD} for an in-depth treatment of Littlewood-Paley and Besov spaces. Let $\lambda_j=2^j$ be an inverse length and let $B_r$ denote the ball of radius $r$ centered at the origin.  Fix a non-negative, radial cut-off function $\chi\in C_0^\infty(B_{1})$ so that $\chi(\xi)=1$ for all $\xi\in B_{1/2}$. Let $\phi(\xi)=\chi(\lambda_1^{-1}\xi)-\chi(\xi)$ and $\phi_j(\xi)=\phi(\lambda_j^{-1})(\xi)$.  Suppose that $u$ is a vector field of tempered distributions and let $\Delta_j u=\mathcal F^{-1}\phi_j*u$ for $j\geq 0$ and $\Delta_{-1}=\mathcal F^{-1}\chi*u$. Then, $u$ can be written as\[u=\sum_{j\geq -1}\Delta_j u.\]
If $\mathcal F^{-1}\phi_j*u\to 0$ as $j\to -\infty$ in the space of tempered distributions, then we define $\dot \Delta_j u = \mathcal F^{-1}\phi_j*u$ and have
\[u=\sum_{j\in \Z}\dot \Delta_j u.\]
We additionally define
\[
\Delta_{<J} f = \sum_{j<J} \dot \Delta _jf;\quad \Delta_{\geq j} f =f- \Delta_{<J} f,
\]
with the obvious modifications for $\Delta_{\leq J}$ and $\Delta_{>J}$. If we do not specify that $J$ is in integer, then we use $\chi(\la_1^{-1} 2^{J} \xi)$ in the definition of $\Delta_{\leq J}$.

Littlewood-Paley blocks interact nicely with derivatives and, by Young's inequality, $L^p$ norms. This is   illustrated by the Bernstein inequalities which read:
\EQ{\label{ineq.bern}
\| D^\al \dot \Delta _jf \|_{L^p} \lesssim_{\al,p}   2^{j|\al| } \|\dot \Delta _jf\|_{L^p}; \quad   \|   \dot \Delta _jf \|_{L^p} \lesssim_{p,q}   2^{j (\frac 3 q - \frac 3 p) } \|\dot \Delta _jf \|_{L^q},
}
provided $1\leq p\leq q\leq \I$, $\al\in \mathbb N^3$.

The Littlewood-Paley formalism is commonly used to define Besov spaces. 
We are primarily interested in Besov spaces with infinite summability index, the norms of which are
\begin{align*}
&||u||_{B^s_{p,\infty}}:= \sup_{-1\leq j<\infty } \lambda_j^s ||\Delta_j u ||_{L^p(\R^n)},
\end{align*}
and
\begin{align*}
&||u||_{\dot B^s_{p,\infty}}:= \sup_{-\infty< j<\infty } \lambda_j^s ||\dot \Delta_j u ||_{L^p(\R^n)}.
\end{align*} 
The critical scale of endpoint Besov spaces for \eqref{eq:NS} are $\Bp$. Note that $L^3\subset L^{3,\I} \subset \Bp$ for $p\in (3,\I]$. In particular, $\Bp$ contains functions $f$ satisfying $|f(x)|\lesssim |x|^{-1}$ when $p>3$. 

One can change the base $2$ to $\la$ in the preceding definitions without changing the spaces involved---in other words dyadic blocks and $\la$-adic blocks can be used in the definitions to obtain equivalent norms for the same spaces. Supporting details are worked out in \cite{BT3}.

\subsection{Mild solutions}\label{sec:mild.sol}  A mild solution is a solution to \eqref{eq:NS} with the form
\[
u(x,t)=e^{t\Delta}u_0 -\int_0^te^{(t-s)\Delta} \mathbb P \nb \cdot (u\otimes u)\,ds,
\]
which is obtained from Duhamel's formula applied to the following version of \eqref{eq:NS}
\[
\partial_t u -\Delta u = - \mathbb P ( u\cdot \nb u ).
\]
In the above, $\mathbb P$ is the Leray projection operator $\mathbb P f= f- \nabla  \Delta^{-1} (\nabla \cdot f)$. 
The Oseen tensor, $e^{(t-s)\Delta} \mathbb P$, appears in the mild solution formulation. Denoting its kernel by $K$ we have from  Solonnikov \cite{VAS} that
\[
| D^\al K |(x,t) \lesssim_\al \frac 1 { (|x|+\sqrt t)^{3+|\al|}},
\]
where $\al$ is a multi-index. 
We will need some elementary convolution estimates to bound terms which arise from the preceding estimate. The next lemma is exactly \cite[Lemma 2.8]{BP1}.
\begin{lemma} \label{lemma.intbound1}For $a\in [0,5)$ and $b\in[0,2)$ where $a+b<5$ we have 
\EQ{
\int_0^t\int \frac {1} {(|x-y| +\sqrt{t-s})^4} \frac 1 {(|y|+\sqrt s)^a} \frac 1 {\sqrt s^b} \,dy\,ds \leq   \frac C {\sqrt t^{1-a}(|x|+\sqrt t)^a}+\frac C {\sqrt t^{1-4}(|x|+\sqrt t)^4}.
}
\end{lemma}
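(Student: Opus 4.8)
\textbf{Proof proposal for Lemma \ref{lemma.intbound1}.}

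The plan is to split the $(y,s)$-integral into regions where one of the three competing length scales $|x-y|+\sqrt{t-s}$, $|y|+\sqrt{s}$, $\sqrt{s}$ dominates, and to estimate each piece by elementary scaling. First I would note the obvious but crucial fact that on the time interval $s\in(0,t)$ one always has $\sqrt{s}\le\sqrt{t}$ and $\sqrt{t-s}\le\sqrt{t}$, and that $|x|+\sqrt{t}\lesssim (|x-y|+\sqrt{t-s}) + (|y|+\sqrt{s})$ by the triangle inequality. Thus at least one of the two factors $(|x-y|+\sqrt{t-s})$, $(|y|+\sqrt{s})$ is $\gtrsim |x|+\sqrt{t}$. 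This dichotomy is the backbone of the argument.

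The key steps, in order: (i) In the region where $|y|+\sqrt{s}\gtrsim |x|+\sqrt{t}$, bound $(|y|+\sqrt{s})^{-a}\lesssim (|x|+\sqrt{t})^{-a}$, pull this factor out, and reduce to estimating $\int_0^t\int (|x-y|+\sqrt{t-s})^{-4}\,\sqrt{s}^{-b}\,dy\,ds$; doing the $y$-integral first gives $\int (|x-y|+\sqrt{t-s})^{-4}\,dy \lesssim \sqrt{t-s}^{-1}$ (since $4-3=1<2$), and then $\int_0^t \sqrt{t-s}^{-1}\sqrt{s}^{-b}\,ds \lesssim \sqrt{t}^{1-b}$ because $b<2$; multiplying back the prefactor yields a term controlled by $\sqrt{t}^{1-b}(|x|+\sqrt{t})^{-a}$, and since $\sqrt{t}\lesssim |x|+\sqrt{t}$ in the relevant range this is dominated by $\sqrt{t}^{1-a}(|x|+\sqrt{t})^{-a}$ — wait, one must be slightly more careful here: actually one keeps $\sqrt{t}^{1-b}\le \sqrt{t}^{-1}(|x|+\sqrt t)^{2-b}$ is not what we want; instead I would use $a+b<5$ together with $a<5$ to absorb the time powers, comparing directly against the stated right-hand side $\sqrt{t}^{1-a}(|x|+\sqrt{t})^{-a} + \sqrt{t}^{1-4}(|x|+\sqrt{t})^{-4}$. (ii) In the complementary region where $|x-y|+\sqrt{t-s}\gtrsim |x|+\sqrt{t}$, bound $(|x-y|+\sqrt{t-s})^{-4}\lesssim (|x|+\sqrt{t})^{-4}\,(|x-y|+\sqrt{t-s})^{0}$ — or rather keep some decay: write $(|x-y|+\sqrt{t-s})^{-4} \le (|x|+\sqrt t)^{-4+\eta}(|x-y|+\sqrt{t-s})^{-\eta}$ for small $\eta>0$ chosen so that the remaining $y$-integral of $(|x-y|+\sqrt{t-s})^{-\eta}(|y|+\sqrt s)^{-a}$ converges, then integrate in $y$ and $s$ using $a<3$ is false in general ($a$ can be up to $5$), so instead I would split further by whether $|y|+\sqrt s \le |x-y|+\sqrt{t-s}$ or not within this region. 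This is where the bookkeeping is heaviest.

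The main obstacle I anticipate is handling the full range $a\in[0,5)$ cleanly: when $a\ge 3$ the spatial integral $\int (|y|+\sqrt s)^{-a}\,dy$ over all of $\R^3$ diverges at infinity only if we forget the cutoff from the other factor, and diverges near $y=0$ when $a\ge 3$ and $\sqrt s = 0$ — so the regularization by $\sqrt s$ and by $|x-y|+\sqrt{t-s}$ must both be exploited simultaneously. The right device is a three-region decomposition of $\R^3\times(0,t)$ according to which of $|x-y|+\sqrt{t-s}$, $|y|+\sqrt s$ is larger, combined on the ``$|y|+\sqrt s$ small'' part with the substitution $y = \sqrt{t-s}\,z$ (or $y=\sqrt s\,z$) to make the integral dimensionless; the constraints $a<5$, $b<2$, $a+b<5$ are exactly what is needed for the resulting scalar integrals in the similarity variable to converge. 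Since this lemma is quoted verbatim as \cite[Lemma 2.8]{BP1}, I would in practice simply cite that reference rather than reproduce the case analysis, but the sketch above is the route one takes to verify it.
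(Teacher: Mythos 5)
The paper offers no proof of this lemma: it is imported verbatim as \cite[Lemma 2.8]{BP1}, with only the remark that its statement and proof closely resemble \cite[Lemma 2.1]{Tsai-DSSI}. So your closing decision to cite rather than reprove is literally what the paper does, and your overall instinct --- decompose $\R^3\times(0,t)$ according to which parabolic scale dominates and then rescale to make the integrals dimensionless --- is the right route (Tsai's version splits $\R^3$ into $\{|y|<R/2\}$, $\{|x-y|<R/2\}$ and the complement, with $R\sim |x|+\sqrt t$).

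As a standalone argument, however, the sketch does not close, and the point where you wrote ``wait'' is a genuine obstruction rather than a bookkeeping nuisance. Your region-(i) computation is correct and in fact sharp: it yields $\sqrt t^{\,1-b}(|x|+\sqrt t)^{-a}$, and at $a=b=0$ the full integral is exactly $c\sqrt t$. But the stated first term is $\sqrt t^{\,a-1}(|x|+\sqrt t)^{-a}$, and since the inequality must hold for all $t\in(0,\I)$, these agree only when the exponents coincide, i.e.\ when $a+b=2$; no ``absorption of time powers using $a+b<5$'' can convert $\sqrt t^{\,1-b}$ into $\sqrt t^{\,a-1}$. What you have actually uncovered is that the quoted right-hand side is calibrated to the parabolically critical case $a+b=2$, which is the only case in which the lemma is invoked in this paper (each of $A_{k+1}$, $C_{k+1}$, $P'_{k+1,1}$, $P'_{k+1,2}$ has $a+b=2$); the general-$(a,b)$ bound should carry $\sqrt t^{\,1-b}$ and $\sqrt t^{\,5-a-b}$ where the statement has $\sqrt t^{\,a-1}$ and $\sqrt t^{\,3}$. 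The second of these exponents also explains region (ii), which you left unfinished: where $|x-y|+\sqrt{t-s}\gtrsim |x|+\sqrt t$ one extracts $(|x|+\sqrt t)^{-4}$ and pays $\int_0^t\int_{|y|\lesssim\sqrt t}(|y|+\sqrt s)^{-a}\sqrt s^{-b}\,dy\,ds\sim \sqrt t^{\,5-a-b}$, with $a+b<5$ being precisely the convergence condition in parabolic polar coordinates and $b<2$ handling the $s$-integral away from $y=0$; the far tail $|y|\gtrsim|x|+\sqrt t$ folds back into the first term. If you reproduce the lemma rather than cite it, either add the hypothesis $a+b=2$ (harmless for every application here) or restate the right-hand side accordingly, after which your two-region argument does go through.
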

 The statement and proof of Lemma \ref{lemma.intbound1} closely resemble a similar integral estimate in \cite[Lemma 2.1]{Tsai-DSSI}. We will need this as well. It  reads: Let $0<a<5,\, 0<b<5$ and $a+b>3$. Then 
\EQ{\phi(x,a,b) = \int_0^1 \int_{\R^3} (|x-y|+\sqrt{1-t})^{-a}(|y|+\sqrt{t})^{-b} dydt} is well defined for $x\in \R^3$ and
\EQ{\label{ineq:Tsai.integral}\phi(x,a,b) \lesssim R^{-a} + R^{-b} + R^{3-a-b} [ 1+ (1_{a=3}+1_{b=3})\log R]}
where $R=|x|+2$. These estimates can be extended to other times by the same change of variable in our proof.

Mild solutions generally are not guaranteed to be regular. Indeed, the local energy solutions defined in the next section are also mild solutions \cite{BT7}. 
In the classical literature they were introduced in the context of strong solutions. 
An important line of research concerned what function spaces guarantee global well-posedness of mild solutions for small data. Example of spaces where a positive answer is available are 
\[
L^3 \subset L^{3,\I} \subset \Bp (3<p<\I)\subset \BMOD.
\]
The last space is the Koch-Tataru space  defined  by finiteness of the following norm:
\EQ{\|e^{t\Delta}u_0\|_{\BMOD} := \esssup_{t\in(0,\I)} t^{\frac{1}{2}}\|e^{t\Delta}u_0\|_{L^\I(\R^3)}+\sup_{x\in\R^3}\sup_{R\in(0,\I)}R^{-\frac{3}{2}}\|e^{t\Delta}u_0\|_{L^2(Q_R(x,0))},}
 where the parabolic cylinder $Q_R(x,t)=B_R(x)\times (t-R^2,t)$. The solutions that are guaranteed to exist by the above global well-posedness satisfy  many  useful properties.  For example, if $u_0$ is small in $\Bp$ for some $p\in (3,\I)$, then the unique strong solution evolving from $u_0$ satisfies
\[
\| u \|_{L^{p'} }(t)\leq t^{\frac 3 {2p'} - \frac 1 2} \| u_0\|_{\Bp},
\]
for all $p'\in [p,\I]$ and $t>0$. We give this property some notation: Let $\mathcal K_p$ be the Kato class defined by the finiteness of the norm
\[
\| u \|_{\mathcal K_p} :=\esssup_{t>0} \sqrt{t}^{1-\frac 3 p} \| u(t)\|_{L^p}.
\] 
Since $L^{3,\I}$ embeds continuously in $\Bp$ for $p\in(3,\I)$ we have, for small data $\|u_0\|_{L^{3,\I}}<\e$, that the global strong solution to \eqref{eq:NS} satisfies
\[
\| u\|_{\mathcal K_p} \lesssim \|u_0\|_{L^{3,\I}}\lesssim\e,
\]
for all $p\in (3,\I]$.

Note that self-similar and discretely self-similar initial data can belong to $L^{3,\I}$ but cannot belong to the smaller space $L^3$. Some relations between these functions spaces intersected with the class of DSS vector fields are contained in \cite{BT3}. For example, $L^{3,\I}\subset L^2_\uloc$ but this embedding fails for $\Bp$ when $p>3$. This last fact complicates the proof of a Besov space version of Theorem \ref{thrm:L3decay}. We elaborate on this in Section \ref{sec:Besov}.

\subsection{Local energy solutions}\label{sec:LE.solutions}

In this subsection, we define local energy solutions and compile some known properties that will be needed in what follows. These solutions were introduced by Lemari\'e-Rieusset \cite{LR} and played an important role in the proof of local smoothing in \cite{JS}. Important details of these solutions were worked out by Kikuchi and Seregin in \cite{KS}. Our definition is taken from \cite{BT8}. Because $L^{3,\I}\subset L^2_\uloc$, it is  a natural class in which to  consider some self-similar and DSS solutions \cite{JS,BT1}.

\begin{definition}[Local energy solutions]\label{def:localEnergy} A vector field $u\in L^2_{\loc}(\R^3\times [0,T))$, $0<T\leq \I$, is a local energy solution to \eqref{eq:NS} with divergence free initial data $u_0\in L^2_{\uloc}(\R^3)$, denoted as $u \in \cN(u_0)$, if:
\begin{enumerate}
\item for some $p\in L^{\frac{3}{2}}_{\loc}(\R^3\times [0,T))$, the pair $(u,p)$ is a distributional solution to \eqref{eq:NS},
\item for any $R>0$, $u$ satisfies
\begin{equation}\notag
\esssup_{0\leq t<R^2\wedge T}\,\sup_{x_0\in \R^3}\, \int_{B_R(x_0 )}\frac 1 2 |u(x,t)|^2\,dx + \sup_{x_0\in \R^3}\int_0^{R^2\wedge T}\int_{B_R(x_0)} |\nb u(x,t)|^2\,dx \,dt<\I,\end{equation}
\item for any $R>0$, $x_0\in \R^3$, and $0<T'< T $, there exists a function of time $c_{x_0,R}\in L^{\frac{3}{2}}_{T'}$\footnote{The constant $c_{x_0,R}(t)$ can depend on $T'$ in principle. This does not matter in practice and we omit this dependence.} so that, for every $0<t<T'$  and $x \in B_{2R}(x_0)$  
\EQ{ \label{eq:pressure.dec}
p(x,t)&= c_{x_0,R}(t)-\De^{-1}\div \div [(u\otimes u )\chi_{4R} (x-x_0)]
\\&\quad - \int_{\R^3} (K(x-y) - K(x_0 -y)) (u\otimes u)(y,t)(1-\chi_{4R}(y-x_0))\,dy 
,
}
in $L^{\frac{3}{2}}(B_{2R}(x_0)\times (0,T'))$
where  $K(x)$ is the kernel of $\De^{-1}\div \div$,
 $K_{ij}(x) = \pd_i \pd_j \frac {-1}{4\pi|x|}$, and $\chi_{4R} (x)$ is the characteristic function for $B_{4R}$, 
\item for all compact subsets $K$ of $\R^3$,  $u(t)\to u_0$ in $L^2(K)$ as $t\to 0^+$,
\item $u$ is suitable in the sense of Caffarelli-Kohn-Nirenberg, i.e., for all cylinders $Q$ compactly supported in  $\R^3\times (0,\I)$ and all non-negative $\phi\in C_c^\I (Q)$, we have  the \emph{local energy inequality}
\EQ{\label{ineq:CKN-LEI}
2\iint |\nb u|^2\phi\,dx\,dt \leq \iint |u|^2(\pd_t \phi + \De\phi )\,dx\,dt +\iint (|u|^2+2p)(u\cdot \nb\phi)\,dx\,dt,
}
\item the function
\EQ{
t\mapsto \int_{\R^3} u(x,t)\cdot {w(x)}\,dx
}
is continuous in $t\in [0,T)$, for any compactly supported $w\in L^2(\R^3)$.
\end{enumerate}
\end{definition}

Local energy solutions with data in $E^2$, which is the closure of $C_c^\I$ under the $L^2_\uloc$ norm,  exhibit far-field regularity in the sense that, for every $t>0$ there exists $\rho$ so that $u$ is smooth in the spatial variable for $|x|\geq \rho$.

Local energy solutions are known to satisfy certain \textit{a priori} bounds \cite{LR}. For example, in \cite{JS,BT8}, the following \textit{a priori} bound is proven: 
Let $u_0\in L^2_\uloc$, $\div u_0=0$, and assume $u\in \mathcal N (u_0)$.  For all $r>0$ we have
\begin{equation}\label{ineq.apriorilocal}
\esssup_{0\leq t \leq \sigma r^2}\sup_{x_0\in \R^3} \int_{B_r(x_0)}\frac {|u|^2} 2 \,dx\,dt + \sup_{x_0\in \R^3}\int_0^{\sigma r^2}\int_{B_r(x_0)} |\nabla u|^2\,dx\,dt <CA_0(r) ,
\end{equation}
where
\[
A_0(r)=rN^0_r= \sup_{x_0\in \R^3} \int_{B_r(x_0)} |u_0|^2 \,dx,
\] 
and
\begin{equation}\label{def.sigma}
\si=\sigma(r) =c_0\, \min\big\{(N^0_r)^{-2} , 1  \big\},
\end{equation}
for a small universal constant $c_0>0$.  Additionally, local energy solutions are mild \cite{BT7}.

The pressure expansion \eqref{eq:pressure.dec} is used when there is insufficient decay to define Calderon-Zygmund operators in the standard fashion. In the present paper this issue does not come up,  but we include the expansion in our definition to be consistent with the existing literature.\footnote{In fact, the pressure can be very simply defined for DSS solutions even in the absence of decay by re-scaling the far-field information to the compact cylinder $B(0,1)\times (0,1]$ \cite{BT3}.}

Local energy solutions can also be defined for generalizations of the Navier-Stokes equations, the only difference being that \eqref{ineq:CKN-LEI} needs to be modified appropriately. This comes up in Section \ref{sec:Besov}.

\subsection{Weak Besov space solutions}\label{sec:weakBesovSolutions}

Since the introduction of local energy solutions,  a class of weak solutions  has been developed by Seregin and \v Sver\' ak which achieves many of the same things as local energy solutions but is more tailored to initial data in critical classes \cite{SeSv,BaSeSv,AB}. The original paper of Seregin and \v Sver\' ak dealt with data in $L^2$ \cite{SeSv}. Barker, Seregin and \v Sver\' ak then extended the construction to $L^{3,\I}$ \cite{BaSeSv}.  Albritton and Barker addressed the Besov space case $\Bp$ for $p\in(3,\I)$ \cite{AB}.  In the case of the critical spaces $\Bp$, we do not have $\Bp\subset L^2_\uloc$---see an example in \cite{BT3}---and, therefore, when studying DSS solutions with $\Bp$ data, we cannot use local energy solutions. In order to generalize Theorem \ref{thrm:L3decay} to Besov spaces, which we do in Section \ref{sec:Besov}, we therefore work with the $\Bp$-weak solutions from \cite{AB} instead of local energy solutions.

\begin{definition}[Weak Besov Solutions]\label{defn:BesovSoln} Let $T>0$, $u_0 \in \Bp$ be a divergence-free vector field where $p\in (3,\I)$. We say that a distributional vector field on $\R^4_+$ is a \textit{weak Besov solution}, also written as ``$\Bp$-weak solution,'' to the Navier-Stokes equations with initial data $u_0$ if there exists an integer $k\ge 0$ such that the following conditions  are satisfied
\begin{enumerate}
\item there exists a pressure $q \in L^{3/2}_\loc(\R^4_+)$ such that $u$ satisfies the Navier-Stokes equations in the sense of distributions
\item $u$ may be decomposed as $u=v+P_k(u_0)$ for $v\in L^\I_T L^2 \cap L^2_T \dot H^1$ and $v(\cdot, t)$ is weakly $L^2$ continuous in time and converges to $0$ in $L^2$ as $t\to 0^+$.
\item $(u,q)$ satisfy the local energy inequality for all $t$ and every non-negative test function $\phi \in C^\I_0(\R^4_+)$
\EQ{
&\int_{\R^3} \phi(x,t) |u(x,t)|^2 \,dx + 2 \int_0^t \int_{\R^3} \phi |\nb u|^2 \,dx\,dt' \\
&\le \int_0^t \int_{\R^3} |u|^2 (\pd_t \phi + \De \phi)+ (|u|^2+2q)(u\cdot\nb \phi)\,dx\,dt'.}
\end{enumerate}
\end{definition}

A useful property of $\Bp$-weak solutions is that energy methods can be applied to $v=u-P_k(u_0)$. Based on this one has a decay property as $t\to 0$,
\[
\sup_{0<s<t} \| v\|_{L^2}^2(t)+\int_0^t \| v\|_{\dot H^1}^2\,ds \lesssim_{u_0} t^{1/2}.
\]

\section{Splittings of DSS data and Picard iterates} \label{sec.approximations}

An important fact used in this paper is that  discretely self-similar vector fields can often be approximated by elements of  classes which are not dense in the ambient space.    In this section, we investigate this theme further to develop tools to analyze far-field regularity and spatial decay of DSS solutions. 

Our first lemma is an approximation property for initial data in $L^{3,\I}\cap DSS$.  This type of result  is similar to \cite[Lemma 2.2]{BT3}.

Let $A_k = \{x:\la^k \leq |x|<\la^{k+1}  \}$ and $A_k^* = \{x:\la^{k-1} \leq |x|<\la^{k+2}  \}$. 

\begin{lemma}\label{lemma:L3wDataDecomp} 
Assume $u_0\in L^{q}_\loc(\R^3\setminus \{0\})$ for some $q\in[3,\I)$, is divergence free, and is $\la$-DSS for a fixed $\la>1$. Let $\e>0$ be given.  Then, there exist divergence free, DSS  vector fields $a_0$ and $b_0$ so that $u_0=a_0 + b_0$  and \[
\| a_0\|_{L^{3,\I}},\|a_0\|_{L^q(A_0)}<\e,\qquad |b_0(x)|\leq \frac {C_i(\e,u_0)} {|x|}. 
\]
The same conclusion follows with the divergence free property removed throughout.
\end{lemma}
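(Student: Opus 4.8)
My plan is a truncation argument for the non-divergence-free statement and a periodization-plus-Leray-projection argument for the divergence-free one; the latter is the substantive case.

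\emph{The non-divergence-free case.} Because $u_0$ is $\la$-DSS, the function $x\mapsto|u_0(x)|\,|x|$ is invariant under $x\mapsto\la x$, so the superlevel set $S_M:=\{x:|u_0(x)|\,|x|>M\}$ satisfies $\la S_M=S_M$; moreover $u_0\in L^q(A_0)$ forces $|S_M\cap A_0|\to0$ as $M\to\I$. Put $a_0:=u_0\,\mathbf 1_{S_M}$, $b_0:=u_0-a_0$. These are $\la$-DSS, $|b_0(x)|\le M/|x|$, and by dominated convergence $\|a_0\|_{L^q(A_0)}\to0$; since $q\ge3$ this forces $\|a_0\|_{L^3(A_0)}\to0$ and hence, by the DSS inequality $\|f\|_{L^{3,\I}}^3\le\tfrac{\la^3}{3(\la-1)}\|f\|_{L^3(A_0)}^3$ of \cite{BT1}, $\|a_0\|_{L^{3,\I}}\to0$. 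Fix $M=M(\e,u_0)$ large and set $C_i:=M$.

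\emph{Setup for the divergence-free case.} The obstacle is that multiplying or mollifying $u_0$ destroys $\div u_0=0$; I handle this by projecting at the very end. Fix a radial $\eta\in C_c^\I(U)$, $U:=\{\la^{-1}<|x|<\la^2\}$, with $\sum_{k\in\Z}\eta(\la^k x)=1$ for $x\ne0$ (such $\eta$ is built from any radial bump positive on $\overline{A_0}$ by dividing by its $\la$-periodization). The DSS relation gives $\eta(\la^k\cdot)u_0=\la^k G(\la^k\cdot)$ with $G:=\eta u_0\in L^q$, $\supp G\Subset U$, so $u_0=\sum_k\la^k G(\la^k\cdot)$, a locally finite sum. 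Given a small $\e'>0$, mollify: $G_\de:=G*\rho_\de\in C_c^\I(U)$ with $\|G-G_\de\|_{L^q}<\e'$; since $\int\rho_\de=1$, $\int G_\de=\int G$, so $G_1:=G-G_\de$ has $\int G_1=0$ and $\supp G_1\Subset U$. Set
\[
V:=\sum_k\la^k G_\de(\la^k\cdot),\qquad R:=u_0-V=\sum_k\la^k G_1(\la^k\cdot),
\]
both $\la$-DSS, and define $a_0:=\mathbb P R$, $b_0:=\mathbb P V$. As $\mathbb P$ commutes with the DSS scaling $f\mapsto\la f(\la\cdot)$ and $u_0=\mathbb P u_0$, these are divergence free, $\la$-DSS, $a_0+b_0=u_0$, and depend only on $u_0$ (and the fixed $\eta,\rho,\de$).

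\emph{Estimates and the main difficulty.} For $b_0=V-\nb\Delta^{-1}\div V$, a periodization estimate gives $|V(x)|\lesssim\|G_\de\|_{L^\I}|x|^{-1}$, while $\nb\Delta^{-1}\div V=\sum_k\la^k(\nb\Delta^{-1}\div G_\de)(\la^k\cdot)$ is the DSS extension of a smooth field that is bounded near $0$ and decays like $|x|^{-3}$ at $\I$, hence also $\lesssim C(\de,u_0)|x|^{-1}$; thus $|b_0(x)|\le C_i(\e,u_0)|x|^{-1}$. For $a_0=R-\nb\Delta^{-1}\div R$, the term $\|R\|_{L^q(A_0)}\lesssim_\la\e'$ is immediate from $\|G_1\|_{L^q}<\e'$ and the overlapping-dyadic structure of the sum. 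The crux—and the place where care is needed—is that $R$ is DSS and therefore \emph{not} in $L^q(\R^3)$, so the Calderón–Zygmund bound for $\nb\Delta^{-1}\div$ is not directly available; instead I would write $\nb\Delta^{-1}\div R=\sum_k\la^k W(\la^k\cdot)$ with $W:=\nb\Delta^{-1}\div G_1$ and use $\supp G_1\Subset U$ together with $\int G_1=0$: these yield $\|W\|_{L^q(\R^3)}\lesssim\|G_1\|_{L^q}\lesssim\e'$, $|W(x)|\lesssim\e'$ near $0$, and $|W(x)|\lesssim\e'|x|^{-4}$ at $\I$. Then $\|\nb\Delta^{-1}\div R\|_{L^q(A_0)}\le\sum_k\la^{k(1-3/q)}\|W\|_{L^q(A_k)}\lesssim_{\la,q}\e'$, splitting the sum at $k=0$ and using the three bounds (the near-origin boundedness of $W$ being exactly what makes the $k<0$ tail summable when $q=3$). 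Hence $\|a_0\|_{L^q(A_0)}\lesssim_{\la,q}\e'$, and then $\|a_0\|_{L^{3,\I}}\lesssim_\la\|a_0\|_{L^3(A_0)}\lesssim_{\la,q}\e'$ by the \cite{BT1} criterion. Taking $\e'$ small in terms of $\e,\la,q$ completes the proof; dropping $\mathbb P$ (i.e. $a_0:=R$, $b_0:=V$) gives an alternative proof of the non-divergence-free statement.
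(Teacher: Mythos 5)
Your proposal is correct, and its overall architecture coincides with the paper's: write $u_0=\td a_0+\td b_0$ with $\td b_0$ a smooth, compactly-supported-per-annulus DSS approximant and $\td a_0$ small in $L^q(A_0)$ (hence in $L^{3,\I}$ via the \cite{BT1} equivalence), then apply $\mathbb P$ to both pieces and show the projection preserves the two properties. The differences are in execution, and they are worth noting. For the approximant, the paper simply invokes density of $C_c^\I(A_0)$ in $L^q(A_0)$ and extends by DSS scaling, whereas you build a DSS partition of unity and mollify; your gain is the mean-zero remainder $\int G_1=0$, which buys $|x|^{-4}$ rather than $|x|^{-3}$ decay for $W=\nb\De^{-1}\div G_1$ --- a nice touch, though not actually needed, since $|x|^{-3}$ already makes the $k\to+\I$ tail of $\sum_k\la^{k(1-3/q)}\|W\|_{L^q(A_k)}$ summable. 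For the key step --- estimating $\mathbb P$ on a DSS field that is not globally $L^q$ --- the paper splits the kernel integral directly into a near part (handled by Calder\'on--Zygmund on $A_0^*$, resp.\ H\"older regularity for the $\td b_0$ piece) and far parts (handled by the $L^{3,\I}$ duality bound $\int_E|f|\lesssim\|f\|_{L^{3,\I}}|E|^{2/3}$ for $|y|\le\la^{-1}$ and by DSS rescaling for $|y|\ge\la^2$); you instead decompose the DSS field into its compactly supported $\la$-adic blocks, apply CZ to each block globally, and sum using the pointwise decay of $W$ at $0$ and $\I$. Your block-sum method is arguably more systematic and you correctly identify that the boundedness of $W$ near the origin is what rescues the $k<0$ tail at $q=3$. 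Finally, your truncation by the scale-invariant superlevel set $S_M=\{|u_0(x)||x|>M\}$ gives a genuinely more elementary proof of the non-divergence-free statement than the paper's (which just drops $\mathbb P$ from the density argument). The only points deserving a sentence of care, common to both proofs, are that $\mathbb P$ extended to DSS fields (by your term-by-term definition or the paper's principal-value splitting) fixes the divergence-free field $u_0$, so that $a_0+b_0=u_0$, and that it commutes with the scaling $f\mapsto\la f(\la\cdot)$; both are standard and are cited to \cite{BT3} in the paper.
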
 


\begin{proof}
 We have $u_0|_{A_0}\in L^3(A_0)$ by Lebesgue space embeddings over domains of finite measure. By density of $C_c^\I(A_0)$ in $L^q(A_0)$ and $L^3(A_0)$, for any $\e'>0$, there exists $\td b_0\in C_c^\I(A_0)$ so that $\|   u_0 -\td b_0\|_{L^3(A_0)}\leq \e'$ and $\|   u_0 -\td b_0\|_{L^q(A_0)}\leq \e'$. Define $\td b_0$ globally by extending it via DSS scaling to all of $\R^3\setminus \{0\}$. Let $\td a_0 = u_0-\td b_0$. By \cite[(3.5)]{BT1} we have 
\[
\|\td a_0\|_{L^{3,\I}}\lesssim_\la  \| \td a_0\|_{L^3(A_0)} \leq \e'.
\]
Furthermore, 
\[
|\td b_0| (x)\lesssim_{\e,u_0} |x|^{-1},
\]
and $\td b_0\in C^\al_\loc(\R^3\setminus \{0\})$ for any $0<\al<1$. 

Let $a_0 = \mathbb P \td a_0$ and $b_0 = \mathbb P \td b_0$. Then, $a_0$ and $b_0$ are divergence free. Additionally,
\[
\| a_0 \|_{L^{3,\I}}\lesssim_{CZ} \| \td a_0 \|_{L^{3,\I}} \lesssim_{\la} \e',
\]
where the first suppressed constant depends on the constant from the Calderon-Zygmund theory. Choosing $\e'$ small in terms of this constant and $\la$ ensure that 
\[
\|a_0\|_{L^{3,\I}}<\e.
\]
We also show $\|a_0\|_{L^q(A_0)} < \e$. We have 
\EQN{
\|a_0\|_{L^q(A_0)} &\leq C \|a_0\|_{L^q(A_0^*)} +C |A_0|^{1/q} \bigg\| \int_{y\notin A_0^*}  \frac 1 {|x-y|^3}	    \td a_0  (y)	\bigg\|_{L^\I(A_0)}.
}
We bound the second term above in two cases. First note that by \cite[Lemma 6.1]{BT8},
which implies that for any measurable set $E$,
\[
\int_E |f| \lesssim \| f \|_{L^{3,\I}}|E|^{1-\frac 1 3},
\]
we have
\[
|A_0|^{1/q} \bigg\| \int_{|y|\leq \la^{-1}} \frac 1 {|x-y|^3} \td a_0(y) \,dy \bigg\|_{L^\I(A_0)}\lesssim_\la \| \td a_0 \|_{L^{3,\I}} \leq \e'. 
\]
On the other hand, by re-scaling $\td a_0$, we can pass from integrals over $A_k$ to integrals over $A_0$  to justify the following estimate
\EQ{\label{ineq.czType}
|A_0|^{1/q} \bigg\| \int_{|y|\geq  \la^{2}} \frac 1 {|x-y|^3} \td a_0(y) \,dy \bigg\|_{L^\I(A_0)}\lesssim_\la \sum_{k=2}^\I \la^{-k} \|\td a_0\|_{L^1(A_0)}\lesssim \|\td a_0\|_{L^3(A_0)}\leq \e'.
}
Hence by taking $\e'$ small as determined by $\la$ we obtain the desired conclusion.

We claim that $b_0|_{{A_0}}$ is bounded. Since $\mathbb P$ preserves discrete self-similarity \cite[p.~61]{BT3},  $b_0$ is DSS. Hence, boundedness on $A_0$  implies 
\[
|b_0(x)|\lesssim |x|^{-1},
\] and completes the proof. To prove the claim, fix $x\in {A_0}$. We have by the definition of  $\mathbb P$ that
\EQN{
| b_0 |(x) &\leq C \bigg[ p.v. \int_{ |x-y|< \frac 1 2} \frac 1 {|x-y|^3} |\td b_0|(y)\,dy + p.v.\int_{|x-y|\geq \frac 1 2} \frac 1 {|x-y|^3} |\td b_0|(y)\,dy\bigg]
\\&\lesssim  \| \td b_0\|_{C^\al(B_{\frac 1 2}(x)  )} +  \| \td b_0 \|_{L^{3,\I}},
}
where the first term comes from the H\"older regularity of $\td b_0$ away from $x=0$ and the last term can be deduced by arguing as in \eqref{ineq.czType}.
\end{proof} 

The same approximation result for $\Bp \cap DSS$ is a corollary of Lemma \ref{lemma:L3wDataDecomp} and \cite[Lemma 2.2]{BT3}.  Note that this is a refinement of \cite[Lemma 2.2]{BT3}. It is possible to revise the proof in \cite{BT3} to get the desired result without   Lemma \ref{lemma:L3wDataDecomp}, but doing so would be less efficient here.

\begin{lemma}\label{lemma:BpDataDecomp} 
Assume $u_0\in \Bp$, is divergence free and is $\la$-DSS for a fixed $\la>1$. Let $\e>0$ be given.  Then, there exist divergence free, DSS vector fields $a_0$ and $b_0$ so that $u_0=a_0 + b_0$  and \[
\| a_0\|_{\Bp}<\e,\qquad |b_0(x)|\leq \frac {C_{ii}(\e,u_0)} {|x|}. 
\]The same conclusion follows with the divergence free property removed throughout.
\end{lemma}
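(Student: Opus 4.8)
## Proof proposal for Lemma~\ref{lemma:BpDataDecomp}

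The plan is to combine the $L^{3,\I}$-based splitting from Lemma~\ref{lemma:L3wDataDecomp} with the DSS Besov splitting from \cite[Lemma 2.2]{BT3}, exploiting that every DSS element of $\Bp$ can be cut into a ``core'' piece living on a bounded annulus (which is automatically in every Lebesgue space by finite measure, hence in $L^{3,\I}$) plus tails that are already controlled in $\Bp$. Concretely, the first step is to apply \cite[Lemma 2.2]{BT3} to $u_0$: this produces a decomposition $u_0 = c_0 + d_0$ with $\|c_0\|_{\Bp}$ as small as we like and $d_0$ a DSS vector field that is bounded by $C|x|^{-1}$, \emph{except} that the $\Bp$-smallness in \cite{BT3} may be stated only after a further correction, or the bound on $d_0$ may come with a $\Bp$ (rather than pointwise) guarantee — so the role of Lemma~\ref{lemma:L3wDataDecomp} is to upgrade whatever ``core'' piece remains to an honest pointwise $|x|^{-1}$ bound.

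The key steps, in order:
\begin{enumerate}
\item Restrict $u_0$ to the fundamental annulus $A_0 = \{\la^0 \le |x| < \la^1\}$. Since $A_0$ has finite measure and $\Bp$ functions are in particular in $L^1_\loc$ (indeed one checks $u_0|_{A_0}\in L^3(A_0)$ using the Littlewood--Paley characterization of $\Bp$ together with Bernstein's inequality~\eqref{ineq.bern} on a bounded set), we have $u_0|_{A_0}\in L^3(A_0)$. Hence $u_0 \in L^3_\loc(\R^3\setminus\{0\})\cap DSS$, so by the equivalence recalled after Theorem~\ref{thrm:Lpdecay1} (from \cite{BT1}, equations (3.5) and (3.7)) we get $u_0 \in L^{3,\I}\cap DSS$.
\item Apply Lemma~\ref{lemma:L3wDataDecomp} with $q=3$ and with the target smallness parameter $\e'$ to be chosen: this yields divergence-free DSS fields $a_0, b_0$ with $u_0 = a_0 + b_0$, $\|a_0\|_{L^{3,\I}} + \|a_0\|_{L^3(A_0)} < \e'$, and $|b_0(x)| \le C_i(\e',u_0)|x|^{-1}$.
\item Observe that the pointwise bound $|b_0(x)|\le C|x|^{-1}$ already places $b_0$ in the correct form for the conclusion, so only $a_0$ needs further treatment. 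Use the continuous embedding $L^{3,\I}\hookrightarrow \Bp$ for $p\in(3,\I)$ (stated in Section~2.2: $L^3\subset L^{3,\I}\subset \Bp$), which gives $\|a_0\|_{\Bp} \lesssim_{p} \|a_0\|_{L^{3,\I}} \lesssim_p \e'$.
\item Choose $\e'$ small enough, depending on $p$ and the embedding constant, so that $\|a_0\|_{\Bp} < \e$. Set $C_{ii}(\e,u_0) := C_i(\e', u_0)$. This gives the claimed decomposition with the divergence-free property; the final sentence (removing divergence-freeness) follows by running Lemma~\ref{lemma:L3wDataDecomp} in its ``divergence-free property removed'' form, i.e.\ skipping the application of $\mathbb P$.
\end{enumerate}

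The main obstacle is Step~1: verifying cleanly that a DSS vector field in $\Bp$ is locally $L^3$ away from the origin. Away from $x=0$ there is no genuine low-frequency obstruction, so the natural route is to fix a cutoff supported in a slightly enlarged annulus $A_0^*$, split $u_0\chi_{A_0^*}$ into Littlewood--Paley pieces, and bound $\|\dot\Delta_j(u_0\chi_{A_0^*})\|_{L^3}$ by $\|\dot\Delta_j(u_0\chi_{A_0^*})\|_{L^p}$ times the measure factor from Bernstein~\eqref{ineq.bern} for low $j$, absorbing the commutator $[\dot\Delta_j,\chi_{A_0^*}]$ with the usual kernel-decay estimates; summability over $j$ then uses the $\Bp$ bound and the fact that $-1+3/p>0$, i.e.\ $p>3$, so the high-frequency sum converges. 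An alternative, and arguably cleaner, route is to simply cite \cite[Lemma 2.2]{BT3} directly for the Besov splitting and invoke Lemma~\ref{lemma:L3wDataDecomp} only to re-derive the pointwise $|x|^{-1}$ bound on the core term, as the paper's remark before the lemma statement suggests; either way, no new ideas beyond those already in \cite{BT1,BT3} and Lemma~\ref{lemma:L3wDataDecomp} are needed, which is why the statement is phrased as a corollary.
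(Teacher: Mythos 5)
Your main route has a genuine gap at Step 1: it is not true that a $\la$-DSS element of $\Bp$ lies in $L^3(A_0)$, and hence not true that $u_0\in L^{3,\I}$. The paper is explicit on this point: there exist DSS vector fields in $\Bp\setminus L^2_\uloc$ (see Sections 2.3 and 4.3, citing \cite{BT3}), and such fields cannot belong to $L^3_\loc(\R^3\setminus\{0\})\cap DSS$, since that class coincides with $L^{3,\I}\cap DSS\subset L^2_\uloc$. Your attempted Littlewood--Paley justification contains the telltale sign error: for $p>3$ the regularity index is $-1+3/p<0$, not $>0$, so $\|\dot\Delta_j u_0\|_{L^p}\lesssim 2^{j(1-3/p)}\|u_0\|_{\Bp}$ \emph{grows} in $j$, and after H\"older on the bounded annulus the high-frequency sum $\sum_{j\ge 0}2^{j(1-3/p)}$ diverges. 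Localizing away from the origin does not help, because the obstruction is at high frequency rather than low frequency. Consequently Lemma~\ref{lemma:L3wDataDecomp} cannot be applied directly to $u_0$.

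The fix is the route you mention only parenthetically at the end, which is in fact the paper's proof: first apply \cite[Lemma 2.2]{BT3} to write $u_0=\bar a_0+\bar b_0$ with $\|\bar a_0\|_{\Bp}<\e'$ and $\bar b_0\in L^{3,\I}$, both DSS and divergence free; then apply Lemma~\ref{lemma:L3wDataDecomp} with $q=3$ to $\bar b_0$ to obtain $\bar b_0=\td a_0+\td b_0$ with $\|\td a_0\|_{L^{3,\I}}<\e'$ and $|\td b_0(x)|\lesssim |x|^{-1}$; finally set $a_0=\bar a_0+\td a_0$ and $b_0=\td b_0$, absorbing $\td a_0$ via the embedding $L^{3,\I}\subset\Bp$ exactly as in your Step 3. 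Your Steps 2--4 are sound once they are applied to $\bar b_0$ rather than to $u_0$ itself; the missing ingredient is that the preliminary Besov-space splitting of \cite{BT3} is genuinely needed and cannot be bypassed by a local integrability argument.
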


\begin{proof}
We have by \cite[Lemma 2.2]{BT3} that, for any $\e'>0$,  there exists $\bar a_0$ and $\bar b_0$  which are DSS and divergence free so that 
\[
\|\bar a_0\|_{\Bp} <\e',
\]
and  $\bar b_0\in L^{3,\I}$. Then, applying Lemma \ref{lemma:L3wDataDecomp} with $q=3$ to $\bar b_0$ allows us to write $\bar b_0 =\td a_0 +\td b_0$ where $\|\td a_0\|_{L^{3,\I}}<\e'$, $|\td b_0|(x)\lesssim |x|^{-1}$ and both are DSS and divergence free. Let $a_0 = \bar a_0  + \td a_0$ and let $b_0 = \td b_0$. We have
\[
\|a_0\|_{\Bp} \leq 	\| \bar a_0\|_{\Bp} + C \| \td a_0\|_{L^{3,\I}} \lesssim \e',
\] 
where the constant comes from the embedding $L^{3,\I}\subset \Bp$. We may therefore choose $\e'$ small to ensure $\|a_0\|_{\Bp} <\e$.
\end{proof}

The preceding approximation properties are useful because they imply that DSS elements of $L^\I_\loc(\R^3\setminus \{0\})$ are, in some sense, dense in larger, critical classes of DSS fields. This means that the algebraic decay properties of the larger classes are, modulo an error which can be made small, the same as $L^\I_\loc(\R^3\setminus \{0\})\cap DSS$, i.e. $|x|^{-1}$.

We now extend the approximation property of the initial data to linear evolution equations. To begin, we need a decay estimate for DSS solutions to the heat equation.
\begin{lemma}[\cite{BP1}, Lemma $3.1$]\label{lemma:heat.equation.pointwise.decay}Assume $u_0\in L^p_\loc(\R^3\setminus \{0\})$ where $p\in (3,\I]$ and is DSS. Then,
\[
\sup_{t\in [1,\la^2]} \| e^{t\Delta} u_0 \|_{L^\I(B_R^c)} \lesssim \| u_0\|_{L^p(A_1)} R^{ \frac 3 p -1 }.
\]
\end{lemma}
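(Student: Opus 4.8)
The plan is to prove Lemma~\ref{lemma:heat.equation.pointwise.decay} by combining the scaling/periodicity structure of DSS data with a simple convolution estimate for the heat kernel against a function bounded by a negative power of $|y|$. First I would reduce to the datum on a single annulus: since $u_0$ is $\la$-DSS, the restriction $u_0|_{A_1}$ determines $u_0$ on all of $\R^3\setminus\{0\}$ via the relations $u_0(x) = \la^k u_0(\la^k x)$, and the hypothesis $u_0 \in L^p_\loc(\R^3\setminus\{0\})$ with $p > 3$ guarantees $\|u_0\|_{L^p(A_1)} < \I$. The key observation is that DSS scaling forces an $L^p$-averaged decay bound: for any dyadic-type annulus $A_k$ one has $\|u_0\|_{L^p(A_k)} = \la^{-k(1 - 3/p)}\|u_0\|_{L^p(A_0)}$ after the change of variables, and by summing (using $p > 3$ so that $1 - 3/p > 0$) one can control $\|u_0\|_{L^p(\{|y| \le r\})}$ and more importantly show that $u_0$ behaves, in an $L^p_\loc$-averaged sense, like $|x|^{3/p - 1}$.

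Next I would write $e^{t\Delta}u_0(x) = \int \Phi_t(x-y) u_0(y)\,dy$ where $\Phi_t$ is the Gaussian, and for $|x| \ge R$ and $t \in [1,\la^2]$ split the integral into the near region $\{|y| \lesssim |x|\}$ and the far region $\{|y| \gtrsim |x|\}$. On the near region, Gaussian decay of $\Phi_t$ away from $y = x$ (uniformly for $t$ in the compact interval $[1,\la^2]$) together with the annular $L^p$ bounds above lets one bound the contribution by a constant times $\|u_0\|_{L^p(A_1)} R^{3/p - 1}$, essentially because the dominant contribution comes from $|y| \approx |x| \approx R$ where the local $L^p$ norm of $u_0$ is of size $R^{3/p-1}$ (after rescaling, it is comparable to $\|u_0\|_{L^p(A_1)}$) and the Gaussian is an $L^{p'}$-normalized bump of width $O(1)$ there, so Hölder in $y$ gives the claim. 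On the far region $|y| \gg |x| \ge R \ge$ const, the Gaussian is superpolynomially small in $|y|$, so even the mild (at most polynomial) growth of $u_0$ permitted by the DSS $L^p_\loc$ hypothesis is dominated, and this piece contributes a harmless lower-order term that is again $\lesssim \|u_0\|_{L^p(A_1)} R^{3/p-1}$ (in fact much smaller). A clean way to organize both regions at once is: by DSS scaling reduce to $R \in [1,\la]$, i.e. $e^{t\Delta}u_0(x) = \la^{-N} e^{\la^{-2N} t \Delta} u_0(\la^{-N} \cdot)$ type identities collapse the general $R$ statement to a bounded range, and on that bounded range the estimate is just ``heat semigroup maps $L^p_{\uloc}$-type data into $L^\I$ with a constant,'' using that $u_0 \in L^p_\uloc$ near $|x|\approx 1$ follows from $u_0 \in L^p(A_1)$ plus DSS.

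The main obstacle, and the step requiring the most care, is the interplay between the singularity of $u_0$ at the origin and the region where $|y|$ is small compared to $|x| \ge R$: one must verify that the contribution of $u_0$ near $0$ to $e^{t\Delta}u_0(x)$ for $|x|$ large is controlled by $R^{3/p-1}$ and not worse. Here the $p > 3$ restriction is essential — it makes $\sum_{k \le 0} \la^{-k(1-3/p)}$-type sums over annuli approaching the origin convergent (so $u_0$ is integrable near $0$ with a quantitative $L^1(\{|y|\le 1\})$ bound controlled by $\|u_0\|_{L^p(A_1)}$), and then $\sup_{|y|\le 1}\Phi_t(x-y) \lesssim e^{-c|x|^2} \lesssim R^{3/p-1}$ for $|x| \ge R \ge 1$ finishes it. Since this is essentially \cite[Lemma 3.1]{BP1}, I would at this point simply cite that reference rather than reproduce the computation, noting only that the $R$-dependence $R^{3/p-1}$ is precisely the self-similar scaling exponent and that the restriction to $t \in [1,\la^2]$ is what keeps the heat kernel a fixed-width bump so that no extra $t$-powers appear.
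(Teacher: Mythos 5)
The paper does not prove this lemma at all: it is imported verbatim from \cite{BP1} (Lemma 3.1 there), and the surrounding text even remarks that the conclusion was originally stated without proof in \cite{BT1}. So there is no in-paper argument to compare against; your decision to ultimately cite \cite{BP1} matches what the authors do. Your sketch of \emph{why} the estimate holds is the natural one and is essentially sound: decompose $\R^3$ into the annuli $A_k$, use the DSS scaling identity $\|u_0\|_{L^p(A_k)}=\la^{k(3/p-1)}\|u_0\|_{L^p(A_0)}$, pair the Gaussian (a fixed-width $L^{p'}$-normalized bump for $t\in[1,\la^2]$) against the local $L^p$ norm on the annuli near $|y|\approx|x|$, and beat the near-origin and far-field contributions with Gaussian decay.

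Two slips should be repaired before this could stand as a proof. First, you cannot ``control $\|u_0\|_{L^p(\{|y|\le r\})}$'': for $p\ge 3$ that quantity is generically infinite, since $\|u_0\|_{L^p(A_k)}=\la^{k(3/p-1)}\|u_0\|_{L^p(A_0)}$ blows up as $k\to-\I$ (e.g. $|u_0|\sim|x|^{-1}$ is not in $L^p(B_1)$). Relatedly, the sum $\sum_{k\le 0}\la^{-k(1-3/p)}$ you invoke \emph{diverges}; the convergent quantity near the origin is the $L^1$ mass, $\|u_0\|_{L^1(A_k)}\lesssim\la^{2k}\|u_0\|_{L^p(A_0)}$, whose sum over $k\le 0$ converges for every $p\ge 1$ --- this is what controls the origin, and it is not where the hypothesis $p>3$ enters ($p>3$ is needed so that the diagonal annuli $|y|\approx|x|$ contribute $|x|^{3/p-1}\le R^{3/p-1}$ and so that the annular $L^p$ norms decay at infinity). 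You do state the correct $L^1$ fix in your final paragraph, so this is a matter of deleting the false intermediate claims rather than of missing an idea. Second, the proposed ``clean'' reduction to $R\in[1,\la]$ by DSS rescaling does not work as stated: rescaling the caloric extension by $\la^N$ maps the time window $[1,\la^2]$ to $[\la^{-2N},\la^{-2N+2}]$, and the statement fixes the time window, so it is not scale-invariant in the form needed. Drop that alternative organization and keep the direct annular decomposition, which already closes the argument.
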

 Note that this conclusion was originally  discussed without proof by Tsai and the first author in \cite{BT1}. When $p=3$, decay can still be proven but there is no universal algebraic decay rate   as demonstrated in \cite{BT1}.

The next lemma establishes a decomposition for the solution to the heat equation with initial data $u_0$.
\begin{lemma}\label{lemma:decomp.heat}
Assume $u_0\in L^q_\loc(\R^3\setminus \{0\})$ for some $q\in (3,\I)$ and is DSS and divergence free. For any $\e>0$, there exist divergence free, DSS $P_{0,1}$ and $P_{0,2}$ so that $e^{t\Delta}u_0= P_{0,1}+P_{0,2}$ and 
\[
|P_{0,1}(x,t)| < \frac \e {\sqrt t^{\frac 3 q} (|x|+\sqrt t)^{1-\frac 3 q} };\qquad |P_{0,2}(x,t)|   \leq \frac   {C_{iii} (\e,u_0)  } {|x|+\sqrt t},
\]
for a constant $C_{iii} (\e,u_0)$ which blows up as $\e\to 0$. Consequently,
\EQ{\label{lim:vanishing.heat}
\lim_{r\to \I} \sup_{|x|\geq r\sqrt t} |e^{t\Delta}u_0(x)|\sqrt t^{\frac 3 q} (|x|+\sqrt t)^{1-\frac 3 q} = 0.
}
\end{lemma}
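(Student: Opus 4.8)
The plan is to apply the data splitting from Lemma~\ref{lemma:L3wDataDecomp} (with the given $q\in(3,\I)$) at a threshold $\e'$ to be chosen, writing $u_0 = a_0 + b_0$ with $\|a_0\|_{L^q(A_0)}<\e'$ (and $\|a_0\|_{L^{3,\I}}<\e'$, though it is the $L^q(A_0)$ bound that matters here) and $|b_0(x)|\le C_i(\e',u_0)|x|^{-1}$. Since the heat semigroup is linear, set $P_{0,1} := e^{t\De}a_0$ and $P_{0,2} := e^{t\De}b_0$; both are DSS because DSS scaling commutes with $e^{t\De}$, and both are divergence free because $a_0,b_0$ are and $e^{t\De}$ commutes with $\nb\cdot$. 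The point is that $a_0$ and $b_0$ each lie in $L^q_\loc(\R^3\setminus\{0\})\cap DSS$, so Lemma~\ref{lemma:heat.equation.pointwise.decay} applies to each.

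For the $P_{0,2}$ term I would argue that $b_0$, being DSS with $|b_0(x)|\lesssim|x|^{-1}$, is locally bounded away from the origin, hence $b_0\in L^\I_\loc(\R^3\setminus\{0\})\cap DSS$; applying Lemma~\ref{lemma:heat.equation.pointwise.decay} with $p=\I$ on the interval $[1,\la^2]$ gives $\|e^{t\De}b_0\|_{L^\I(B_R^c)}\lesssim \|b_0\|_{L^\I(A_1)} R^{-1}$ for $t\in[1,\la^2]$, and then DSS scaling of the solution $e^{t\De}b_0$ upgrades this to the scale-invariant bound $|P_{0,2}(x,t)|\le C_{iii}(\e,u_0)(|x|+\sqrt t)^{-1}$ for all $t>0$ and all $x$ (the bound for $|x|\lesssim\sqrt t$ following from boundedness of $e^{t\De}b_0$ at unit time combined with rescaling). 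The constant is $\sim\|b_0\|_{L^\I(A_1)}\sim C_i(\e',u_0)$, which blows up as $\e'\to0$, consistent with the statement. For the $P_{0,1}$ term, Lemma~\ref{lemma:heat.equation.pointwise.decay} with $p=q$ gives $\|e^{t\De}a_0\|_{L^\I(B_R^c)}\lesssim_\la \|a_0\|_{L^q(A_1)} R^{3/q-1}$ for $t\in[1,\la^2]$, and again DSS scaling of $e^{t\De}a_0$ promotes this to $|P_{0,1}(x,t)|\le C_\la \|a_0\|_{L^q(A_0)}\,\sqrt t^{-3/q}(|x|+\sqrt t)^{3/q-1}$ for all $x$ with $|x|\gtrsim\sqrt t$ (and similarly for $|x|\lesssim\sqrt t$ the same envelope holds up to the universal constant). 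Since $\|a_0\|_{L^q(A_0)}<\e'$, choosing $\e' := \e/C_\la$ makes this $<\e\,\sqrt t^{-3/q}(|x|+\sqrt t)^{3/q-1}$, as required.

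The limit \eqref{lim:vanishing.heat} then follows immediately: given $\eta>0$, fix $\e=\eta/2$ and obtain the splitting $e^{t\De}u_0 = P_{0,1}+P_{0,2}$. On $|x|\ge r\sqrt t$ we have $|P_{0,1}|\sqrt t^{3/q}(|x|+\sqrt t)^{1-3/q}<\e=\eta/2$, while $|P_{0,2}|\sqrt t^{3/q}(|x|+\sqrt t)^{1-3/q}\le C_{iii}(\e,u_0)(|x|+\sqrt t)^{-1}\sqrt t^{3/q}(|x|+\sqrt t)^{1-3/q} = C_{iii}(\e,u_0)\,(\sqrt t/(|x|+\sqrt t))^{3/q}\le C_{iii}(\e,u_0)\,r^{-3/q}$, which is $<\eta/2$ once $r$ is large enough; summing gives the $\limsup\le\eta$ for every $\eta>0$.

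The main obstacle, and the only step requiring care, is the passage from the fixed-time-interval estimates of Lemma~\ref{lemma:heat.equation.pointwise.decay} (which control $\|e^{t\De}(\cdot)\|_{L^\I(B_R^c)}$ only for $t\in[1,\la^2]$) to the claimed scale-invariant pointwise bounds valid for all $t>0$ and all $x\in\R^3$, including the near-diagonal region $|x|\lesssim\sqrt t$. The resolution is that $e^{t\De}a_0$ and $e^{t\De}b_0$ are themselves DSS vector fields, so $v(x,t) := \la v(\la x,\la^2 t)$ for the appropriate $\la$; given any $(x,t)$ one rescales by a power of $\la$ to bring $t$ into $[1,\la^2]$, applies the lemma (for the far region) or uses that $e^{t\De}$ of an $L^\I_\loc$ or $L^q(A_1)$ DSS field is bounded on compact space-time sets away from $x=0$ (for the near region, noting $|x|\gtrsim\sqrt t$ is not actually needed there since $|x|+\sqrt t\sim\sqrt t$), and then rescales back; the DSS scaling exponents are exactly arranged so that the right-hand envelopes $\sqrt t^{-3/q}(|x|+\sqrt t)^{3/q-1}$ and $(|x|+\sqrt t)^{-1}$ are scale invariant. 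This is routine but must be written out, and it is where the structure of the argument really lives.
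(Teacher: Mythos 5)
Your proposal is correct and follows essentially the same route as the paper: split $u_0=a_0+b_0$ via Lemma~\ref{lemma:L3wDataDecomp}, set $P_{0,1}=e^{t\De}a_0$ and $P_{0,2}=e^{t\De}b_0$, apply Lemma~\ref{lemma:heat.equation.pointwise.decay} with $p=q$ and $p=\I$ respectively on $t\in[1,\la^2]$, extend by DSS scaling, and deduce the limit by making the $P_{0,2}$ contribution small for large $r$. Your extra care about the near-diagonal region and the rescaling step is a sound elaboration of what the paper dispatches in one sentence.
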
 
\begin{proof}
We use Lemma \ref{lemma:L3wDataDecomp} to split $u_0$ into $a_0+b_0$. We next apply Lemma \ref{lemma:heat.equation.pointwise.decay} to each term. Since Lemma \ref{lemma:L3wDataDecomp} allows us to make $\| a_0\|_{L^q(A_1)}$  as small as we like, we obtain the pointwise estimates for $P_{0,1}=e^{t\Delta}a_0$ and $P_{0,2}=e^{t\Delta}b_0$ when $t\in [1,\la^2]$. DSS scaling extends these estimates to all of $\R^3\times (0,\I)$.

For the last claim, given $\e' >0$, we need to show 
\[ 
 |e^{t\Delta}u_0(x,t)|\sqrt t^{\frac 3 q} (|x|+\sqrt t)^{1-\frac 3 q}  <\e',
\]
for $|x|\geq  R(\e') \sqrt t$ where $R$ is chosen based on $\e'$. Take $\e= \e'/2$ above and decompose $e^{t\Delta}u_0 = P_{0,1} +P_{0,2}$ where
\[
|P_{0,1}(x,t)|\leq \frac {\e'/2} {\sqrt t^{\frac 3 q} (|x|+\sqrt t)^{1-\frac 3 q} }\text{ and } |P_{0,2}(x,t)|\lesssim \frac {C_{iii}(\e'/2,u_0)} {|x|+\sqrt t}.
\]
Assuming $|x|\geq  R(\e') \sqrt t$ we have
\EQ{
 |e^{t\Delta}u_0(x,t)|\sqrt t^{\frac 3 q} (|x|+\sqrt t)^{1-\frac 3 q} &\leq \frac {\e'}2 + \frac { C_{iii}(\e',u_0)}{(\frac{|x|}{\sqrt{t}}+1)^{\frac 3 q}}
\\&\leq \frac {\e'}2 + \frac{C_{iii}(\e',u_0)}{(R+1)^{\frac 3 q}}.
}
By taking $R$ large based on $\e'$ we can control the preceding terms by $\e'$. Therefore
\EQ{\lim_{r\to \I} \sup_{|x|\geq r\sqrt t} |e^{t\Delta}u_0(x)|\sqrt t^{\frac 3 q} (|x|+\sqrt t)^{1-\frac 3 q} = 0.}

\end{proof}

\begin{proposition}\label{proposition:PIdecomp2} Fix $q\in (3,\I)$. Suppose $u_0\in L^q_\loc(\R^3 \setminus \{0\})$ is divergence free and DSS. Given $\e>0$ and $k\in \N$, there exist constants $C_{iii}$, $C_{iiii}$ and $C'$  and vector fields
  $P_{k,1}$ and $P_{k,2}$ so that  
\EQ{P_k(u_0)= P_{k,1}+P_{k,2}}
\EQ{\label{eq:ak}
|P_{k,1}(x,t)|\leq \frac{(2-2^{-k})\varepsilon}{\sqrt{t}^{\frac 3 q}(|x|+\sqrt t)^{1-\frac 3 q}},
}
and,
\EQ{\label{eq:bk}
|P_{k,2}(x,t)|\lesssim \frac {C_{iii}(\e,u_0,k)} {|x|+\sqrt t}.
}
Furthermore, there exist $P_{k,1}'$ and $P_{k,2}'$  so that the difference $P_k-P_{k-1} =P_{k,1}'+P_{k,2}'$ where $P_{k,1}'$ and $P_{k,2}'$ satisfy
\EQ{\label{ineq:higherPk1}|P_{k,1}'(x,t)| \leq  \frac {C_{iiii}(k)\varepsilon} {\sqrt{t}^{1-a_k} (|x|+\sqrt{t})^{a_k}}}
 and 
\EQ{\label{ineq:higherPk2}|P_{k,2}'(x,t)| \leq \frac {C'(\varepsilon,u_0,k)\sqrt{t}^{b_k}} { (|x|+\sqrt{t})^{b_k+1}},}
 where $a_k=\min\{(k+1)(1-\frac 3q),4\}$  and $b_1=1$, $b_2=2$, and $b_{k+1}=\min\{   b_k+1-3/q  , 4  \}$.
\end{proposition}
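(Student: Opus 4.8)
The plan is to establish both splittings by induction on $k$, driven by the bilinear identity $P_{k+1}=P_0+B(P_k,P_k)$ and its difference form $P_{k+1}-P_k=B(P_k-P_{k-1},P_k)+B(P_{k-1},P_k-P_{k-1})$, together with the Oseen bound $|K(x,t)|\lesssim(|x|+\sqrt t)^{-4}$ and the convolution estimate of Lemma \ref{lemma.intbound1}. The base case of the first splitting is exactly Lemma \ref{lemma:decomp.heat} (note $2-2^{-0}=1$); taking $\e=1$ there also yields the crude global bound $|P_k|\lesssim_{u_0,k}\sqrt t^{-3/q}(|x|+\sqrt t)^{-1+3/q}$, which I will use to make the bilinear integrals converge.

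For the inductive step of the first splitting I would fix $\e$, assume the claim at level $k$ for every smallness parameter, apply it to $P_k$ with a parameter $\e'$ to be chosen, and apply Lemma \ref{lemma:decomp.heat} to $P_0$ with parameter $(2-2^{-k})\e$, getting $P_k=P_{k,1}+P_{k,2}$ and $P_0=Q_1+Q_2$. Expanding $B(P_k,P_k)$ into its four pieces, the three pieces carrying a factor $P_{k,2}$ are each $\lesssim C(\e,u_0,k)/(|x|+\sqrt t)$ by Lemma \ref{lemma.intbound1} applied with $(a,b)=(2-\tfrac3q,\tfrac3q)$ and $(2,0)$ (both admissible since $q>3$), absorbing the two output terms into $(|x|+\sqrt t)^{-1}$ via $\sqrt t\le|x|+\sqrt t$. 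For $B(P_{k,1},P_{k,1})$, Lemma \ref{lemma.intbound1} with $(a,b)=(2-\tfrac6q,\tfrac6q)$ gives a bound by $C(\e')^2\big(\sqrt t^{\,1-6/q}(|x|+\sqrt t)^{-2+6/q}+\sqrt t^{\,3}(|x|+\sqrt t)^{-4}\big)$, and $\sqrt t\le|x|+\sqrt t$ forces each summand below $\sqrt t^{-3/q}(|x|+\sqrt t)^{-1+3/q}$. Setting $P_{k+1,1}=Q_1+B(P_{k,1},P_{k,1})$ and $P_{k+1,2}=Q_2+(\text{the three other pieces})$ and choosing $\e'$ so small that $C(\e')^2\le 2^{-(k+1)}\e$ makes the prefactor of $P_{k+1,1}$ at most $(2-2^{-k})\e+2^{-(k+1)}\e=(2-2^{-(k+1)})\e$, while $P_{k+1,2}$ inherits a finite constant depending on $\e,u_0,k$.

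For the second splitting I would induct on $k$, treating the off-pattern values $k=1$ and $k=2$ as base cases checked directly: for $k=1$, write $P_1-P_0=B(P_0,P_0)$, split $P_0=Q_1+Q_2$ by Lemma \ref{lemma:decomp.heat}, put the three pieces containing a factor $Q_1$ into $P_{1,1}'$ (Lemma \ref{lemma.intbound1} gives decay at rate $a_1=2(1-\tfrac3q)$, and $\e<1$ turns the $\e^2$ from $B(Q_1,Q_1)$ into $\e$), and put $B(Q_2,Q_2)$, which decays like $\sqrt t\,(|x|+\sqrt t)^{-2}$, into $P_{1,2}'$ with $b_1=1$; the same computation applied to $P_2-P_1$ gives $b_2=2$. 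For the step $k\to k+1$ I would feed the level-$k$ splitting $P_k-P_{k-1}=P_{k,1}'+P_{k,2}'$ and the first-splitting decompositions of $P_k,P_{k-1}$ into the difference identity, expand into eight bilinear pieces, route every piece carrying a factor from the ``small'' family $\{P_{k,1}',P_{k,1},P_{k-1,1}\}$ into $P_{k+1,1}'$ and the two remaining pieces into $P_{k+1,2}'$. A convolution against a $P_{\cdot,1}$-type factor (bounded by $\sqrt t^{-3/q}(|x|+\sqrt t)^{-1+3/q}$) advances the spatial exponent by $1-\tfrac3q$ until the cap in the second term of Lemma \ref{lemma.intbound1} intervenes, so the slowest piece of $P_{k+1,1}'$, namely $B(P_{k,1}',P_{k,1})$, decays at rate $a_{k+1}=\min\{a_k+1-\tfrac3q,4\}$ and, being a product of two $\e$-small fields, carries a prefactor that is a $k$-dependent constant times $\e$; tracking the $\sqrt t$-weight of $P_{k,2}'$ through the kernel in the two surviving pieces produces $b_{k+1}=\min\{b_k+1-\tfrac3q,4\}$. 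Because every ingredient is built from $u_0$ by Lemma \ref{lemma:decomp.heat} and fixed bilinear operations, $P_{k,1}'$ and $P_{k,2}'$ depend only on $u_0$.

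The exponent arithmetic for the finitely many interaction terms is routine; I expect the real work to lie in two places. First, Lemma \ref{lemma.intbound1} only handles integrands of the form $(|y|+\sqrt s)^{-a}\sqrt s^{-b}$ with $b\ge0$, whereas the correction fields $P_{k,2}'$ (and $P_{k,1}'$ once $a_k>1$) carry a \emph{positive} power of $\sqrt s$ in their pointwise bounds; to extract the sharp exponents $a_{k+1}$ and especially $b_{k+1}$ one should first split the spatial integral into the regions $|y|\lesssim\sqrt s$ and $|y|\gtrsim\sqrt s$, or record a refined convolution bound in the spirit of Lemma \ref{lemma.intbound1} and \eqref{ineq:Tsai.integral}, before applying the Oseen estimate. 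Second, the constants must be tracked so that each ``$a_k$''-type piece keeps a prefactor of the form $C_{iiii}(k)\e$ while the $u_0$- and $\e$-dependent constants stay confined to the ``$b_k$''-type pieces; the freedom, once $\e$ and $k$ are fixed, to take the smallness parameter in the level-$k$ inductive hypothesis as small as we wish is precisely what makes this accounting close.
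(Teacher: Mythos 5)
Your overall architecture (induction driven by $P_{k+1}=P_0+B(P_k,P_k)$ and $P_{k+1}-P_k=B(P_k-P_{k-1},P_k)+B(P_{k-1},P_k-P_{k-1})$, base case from Lemma \ref{lemma:decomp.heat}, convolution bounds from Lemma \ref{lemma.intbound1}) is the same as the paper's. Your first splitting is sound and uses a slightly different mechanism: where the paper keeps one smallness parameter and absorbs the faster-decaying pieces $A_{k+1}+C_{k+1}$ via a far-field cutoff $\chi(x/(R\sqrt t))$, you shrink the level-$k$ parameter $\e'$ so that $B(P_{k,1},P_{k,1})$ contributes at most $2^{-(k+1)}\e$; both work. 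Your observation that Lemma \ref{lemma.intbound1} as stated requires $b\ge 0$ while the second-splitting convolutions carry positive powers of $\sqrt s$ is legitimate --- the paper itself invokes the lemma outside its stated hypotheses there, and your proposed repair (split the regions $|y|\lesssim \sqrt s$, $|y|\gtrsim\sqrt s$, or prove a refined variant) is the right one.

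The genuine gap is in your routing for the second splitting. Sending every piece that contains a factor from $\{P_{k,1}',P_{k,1},P_{k-1,1}\}$ into $P_{k+1,1}'$ is incompatible with \eqref{ineq:higherPk1}, which needs a prefactor of the form $C_{iiii}(k+1)\,\e$ with $C_{iiii}$ independent of $\e$ (this independence is what the proof of Theorem \ref{thrm:Lpdecay1} later exploits by choosing $\e=\e'/(2C_{iiii}(k))$). The mixed pieces break this: $B(P_{k,1}',P_{k,2})$ carries the prefactor $C_{iiii}(k)\e\cdot C_{iii}(\e',u_0,k)$ and $B(P_{k,2}',P_{k,1})$ carries $C'(\e,u_0,k)\cdot\e'$, and since $C_{iii}$ and $C'$ blow up as their smallness parameters shrink, no choice of $\e'$ makes both products $O(\e)$ with an $\e$-independent constant --- shrinking $\e'$ tames the second cross term while inflating the first. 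The same defect is already present in your base case, where $B(Q_1,Q_2)$ has prefactor $\e\,C_{iii}(\e,u_0)$ and shape $\sqrt t^{\,1-3/q}(|x|+\sqrt t)^{-(2-3/q)}$, which is neither $C\e$ times the $a_1$-shape nor dominated by the $b_1$-shape $\sqrt t(|x|+\sqrt t)^{-2}$; your closing appeal to ``the freedom to take the smallness parameter as small as we wish'' is precisely where the accounting fails to close. The paper avoids the issue by not splitting the second factor at all: it sets $P_{k+1,1}'=-\int_0^t e^{(t-s)\De}\mathbb P\nb\cdot(P_{k,1}'\otimes P_k+P_{k-1}\otimes P_{k,1}')\,ds$ and $P_{k+1,2}'=-\int_0^t e^{(t-s)\De}\mathbb P\nb\cdot(P_{k,2}'\otimes P_k+P_{k-1}\otimes P_{k,2}')\,ds$, bounding the unsplit iterates by $C(u_0,k)\sqrt s^{\,-3/q}(|y|+\sqrt s)^{-1+3/q}$ with an $\e$-independent constant, so the only $\e$ entering the $P_{k+1,1}'$ bound is the one carried by $P_{k,1}'$. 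An alternative repair inside your scheme is to reuse the far-field cutoff: each mixed piece decays strictly faster than the $a_{k+1}$-shape by a factor $(\sqrt t/(|x|+\sqrt t))^{\de}$ with $\de\ge 3/q$, so its far-field part can be forced below $\e$ times that shape and its near-field part dominated by the $b_{k+1}$-shape, but some such device is needed and is absent from your write-up.
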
  

We do not require $P_{k,i}$ to be divergence free but it seems possible to enforce this condition. In our applications, we will need the initial data splittings to be divergence free but it does not appear necessary for this to extend to the Picard iterate splittings.

\begin{proof}
We have the above for $P_0$ by \cref{lemma:decomp.heat}. 
Now assume the above holds for $P_k$. Then,
\EQ{P_{k+1} -P_0= B(P_k,P_k)
=B(P_{k,1}+P_{k,2},P_{k,1}+P_{k,2})\\
=-(A_{k+1}+B_{k+1}+C_{k+1})
}
where
 \EQ{A_{k+1} &:=  \int_0^t e^{(t-s)\De}\mathbb{P} \nabla \cdot \big ( P_{k,1} \otimes P_{k,1} \big) \, ds,\\
 B_{k+1} &:= \int_0^t e^{(t-s)\De}\mathbb{P} \nabla \cdot \big ( P_{k,2}\otimes P_{k,2} \big) \, ds,\\
 C_{k+1} &:=  \int_0^t e^{(t-s)\De}\mathbb{P} \nabla \cdot \big ( (P_{k,1}\otimes P_{k,2}) + (P_{k,2}\otimes P_{k,1}) \big) \, ds.}
For the first term, $A_{k+1}$, 
\EQ{
A_{k+1}
&\lesssim \int_0^t \int \frac 1 {(|x-y|+\sqrt{t-s})^4} \left( \frac \varepsilon {\sqrt{s}^{\frac 3 q} (|y|+\sqrt s)^{1-\frac 3 q}}\right)^2 \, dy\, ds\\
&\lesssim \frac {\varepsilon^2}{\sqrt{t}^{\frac 6 q -1} (|x|+\sqrt t)^{2-\frac 6 q}},
}
by \cref{lemma.intbound1}. By the same lemma,
\EQ{
C_{k+1} 
&\lesssim \int_0^t \int \frac 1 {(|x-y|+\sqrt{t-s})^4} \frac \varepsilon {\sqrt{s}^{\frac 3 q} (|y|+\sqrt s)^{1-\frac 3 q}} \frac{ C_{iii}(\varepsilon, u_0,k)} {|y|+\sqrt s}\, dy\, ds\\
&\lesssim \frac {\varepsilon C_{iii}(\varepsilon, u_0,k) }{\sqrt{t}^{\frac 3 q -1} (|x|+\sqrt t)^{2-\frac 3 q}}.
}
Lastly, for $B_{k+1}$, we use \eqref{ineq:Tsai.integral} to find
\EQ{
B_{k+1} 
&\lesssim \int_0^t \int \frac 1 {(|x-y|+\sqrt{t-s})^4} \left(\frac{ C_{iii}(\varepsilon, u_0,k)  } {|y|+\sqrt s}\right)^2\, dy\, ds\\
&\lesssim \frac { C_{iii}(\varepsilon, u_0,k)^2 }{(|x|+\sqrt t)^{2}}.
}
Note that each term decays faster than $\sqrt{t}^{-\frac 3 q}(|x|+\sqrt t)^{-1+\frac 3 q}$. 

As a brief aside we  emphasize   that $P_k-P_0$ has \textit{more} decay than either $P_k$ or $P_0$ and can be written as a sum of terms bounded by multiples of 
\EQ{ \frac \varepsilon {\sqrt{t}^{\frac 6q -1}(|x|+\sqrt t )^{2-\frac 6q}} \text{ and } \frac{C(\varepsilon,u_0,k)\sqrt t}{(|x|+\sqrt t )^2}.}
When $k=1$ this will be used as the base case for an inductive argument later in this proof. 

Let $\td \chi(x)$ be a smooth cut-off function supported in $B(0,2)$ and equal to $1$ in $B(0,1)$. Let $\chi_R(x,t)= \chi ( x/(\sqrt t R))$. By taking $R$ large, we have that 
\[
(A_{k+1} + C_{k+1} )(1-\chi_R(x,t))\leq \frac {\e (2^{-k}- {2^{-(k+1)})}}{\sqrt{t}^{\frac 3 q}(|x|+\sqrt t)^{1-\frac 3 q}},
\]
while $(A_{k+1} + C_{k+1} ) \chi_R(x,t)+B_{k+1} $ is bounded by
\[
\frac {C_{iii}(\e,u_0,k+1)} {|x|+\sqrt t},
\]
for a suitable choice of $C_{iii}(\e,u_0,k+1)$---note that we used DSS scaling and the fact that $A_{k+1}$ and $C_{k+1}\in L^\I_\loc(\R^3\times [1,\la^2])$.

\bigskip Now assume  that $P_k-P_{k-1} =P_{k,1}'+P_{k,2}'$ where $P_{k,1}'$ and $P_{k,2}'$ satisfy
\EQ{|P_{k,1}'(x,t)| \leq \frac {C_{iiii}(k) \varepsilon }{\sqrt{t}^{1-a_k} (|x|+\sqrt{t})^{a_k}}}
 and 
\EQ{|P_{k,2}'(x,t)| \lesssim \frac {C'(\varepsilon,u_0,k)\sqrt{t}^{b_k}} { (|x|+\sqrt{t})^{b_k+1}},}
 where $a_k=\min\{(k+1)(1-\frac 3q),4\}$ and $b_1=1$, $b_2=2$, and $b_{k+1}=\min\{  b_k+1-3/q  , 4  \}$. This assertion holds when $k=1$ by the first part of this proof. 
 Then,
\EQ{P_{k+1} -P_k&= B(P_{k},P_{k})-B(P_{k-1},P_{k-1})\\&=
B(P_{k}-P_{k-1},P_{k})+B(P_{k-1},P_k-P_{k-1})\\&=B(P_{k,1}'+P_{k,2}',P_{k})+B(P_{k-1},P_{k,1}'+P_{k,2}')\\
&=:P_{k+1,1}'+P_{k+1,2}'
}where
 \EQ{P_{k+1,1}'&:=  -\int_0^t e^{(t-s)\De}\mathbb{P} \nabla \cdot \big ( P_{k,1}' \otimes P_{k} +P_{k-1} \otimes  P_{k,1}' \big) \, ds,\\
 P_{k+1,2}' 
 &:= -\int_0^t e^{(t-s)\De}\mathbb{P} \nabla \cdot \big ( P_{k,2}'\otimes P_{k}  +  P_{k-1}\otimes P_{k,2}'\big) \, ds.}
For the first term,
\EQ{
P_{k+1,1}'
&\leq  C(u_0,k) \int_0^t \int \frac 1 {(|x-y|+\sqrt{t-s})^4} \frac {C_{iiii}(k)\varepsilon \sqrt{s}^{a_k-1}}{ (|y|+\sqrt s)^{a_k}} \frac {\sqrt s^{-\frac 3 q}} {(|y|+\sqrt s)^{1-\frac 3 q}} \, dy\, ds\\
&\leq C(u_0,k) \frac {C_{iiii}(k) \varepsilon  \sqrt{t}^{a_{k+1}-1}}{ (|x|+\sqrt t)^{a_{k+1}}} 
=: \frac {C_{iiii}(k+1) \varepsilon  \sqrt{t}^{a_{k+1}-1}}{ (|x|+\sqrt t)^{a_{k+1}}},
}
by \cref{lemma.intbound1} and where the upper bound for $P_k$ comes from  combining \cite[Theorem 1.1]{BP1} and \cite[Theorem 1.2]{BP1}---note that the   dependence of the otherwise universal constant on $k$ and $u_0$ comes from  \cite{BP1}.
By the same lemma,
\EQ{
P_{k+1,2}'
&\lesssim_{u_0,k} \int_0^t \int \frac 1 {(|x-y|+\sqrt{t-s})^4} \frac{ C'(\varepsilon, u_0)\sqrt{s}^{b_k}} {(|y|+\sqrt s)^{b_k+1}} \frac {1} {\sqrt s^{\frac 3 q}(|y|+\sqrt s)^{1-\frac 3 q}}\, dy\, ds.
\\
&\lesssim \frac { C'(\varepsilon, u_0,k) \sqrt{t}^{b_{k}+1-3/q}}{(|x|+\sqrt t)^{b_k+2-3/q}}
=:\frac { C'(\varepsilon, u_0,k+1) \sqrt{t}^{b_{k+1} }}{(|x|+\sqrt t)^{b_{k+1}+1}},
}
where $b_{k+1} = b_k+1-3/q$.

\end{proof}

\section{Asymptotics of DSS Navier-Stokes flows}

\subsection{Vanishing algebraic decay}
We may now prove \cref{thrm:Lpdecay1} using \cref{proposition:PIdecomp2} and \cite[Theorem $1.2$]{BP1}.
\begin{proof}[Proof of \cref{thrm:Lpdecay1}]
 By \cref{proposition:PIdecomp2}, for any $\e>0$ we can write $P_k = P_{k,1}+P_{k,2}$ such that 
\EQ{|P_{k,1}(x,t)| \le \frac {\varepsilon (2-2^{-k})} {\sqrt{t}^{\frac 3 q} (|x|+\sqrt{t})^{1-\frac 3 q}}, \text{ and } |P_{k,2}(x,t)|\le \frac{C_{iii}(\varepsilon,u_0,k)}{|x|+\sqrt{t}}.}
Write $u = u-P_k + P_k = \left(u-P_k+P_{k,2}\right)+P_{k,1}$, $k$ to be specified momentarily.
Next, by \cite[Theorem $1.2$]{BP1}, we know that, 
\EQ{|u-P_k|(x,t)\lesssim_{u_0,\la,k} \frac 1 {\sqrt{t}^{1- (k+2)(1-\frac 3 q)} (|x|+\sqrt{t})^{(k+2)(1-\frac 3 q)}},}
in the region $|x|\ge R_0 \sqrt t$, for $(k+2)(1-\frac 3 q) \le 4$. Clearly, for $ k\ge \frac q {q-3}-2$, \EQ{|u-P_k|(x,t)\lesssim_{u_0,\la,k}\frac {1} {|x|+\sqrt{t}},}
in the same region.
Therefore the first part of the theorem is proven, with $a(x,t)=P_{k,1}(x,t)$ and $b(x,t)=u-P_k+P_{k,2}$.

We now prove the various little-o properties asserted in the theorem. Observe that 
\EQ{|u(x,t)| \sqrt{t}^{\frac{3}{q}}(|x|+\sqrt{t})^{1-\frac{3}{q}} \le& |a(x,t)| \sqrt{t}^{\frac{3}{q}}(|x|+\sqrt{t})^{1-\frac{3}{q}}\\
&+ |b(x,t)| \sqrt{t}^{\frac{3}{q}}(|x|+\sqrt{t})^{1-\frac{3}{q}}\\
\leq& \varepsilon + C(\varepsilon,u_0,k,\la) \sqrt{t}^{\frac{3}{q}}(|x|+\sqrt{t})^{-\frac{3}{q}},}
where we are taking $|x|\geq R_0\sqrt t$.
Since we are further interested in $|x|\ge r\sqrt{t}$, we have 
\EQ{ C(\varepsilon,u_0,k,\la)\sqrt{t}^{\frac{3}{q}}(|x|+\sqrt{t})^{-\frac{3}{q}} =  C(\varepsilon,u_0,k,\la) (\frac{|x|}{\sqrt{t}}+1)^{-\frac{3}{q}}\le \frac{C(\varepsilon,u_0,k,\la)}{(r+1)^{3/q}}.} 
For a given $\e'>0$, and taking $\e=\e'/2$, we may now choose $r$ large to ensure the preceding quantity is bounded by $\e'/2$.
Therefore,
\EQ{\lim_{r\to \I}\sup_{|x|\ge r \sqrt t} |u(x,t)| \sqrt{t}^{\frac{3}{q}}(|x|+\sqrt{t})^{1-\frac{3}{q}}=0.}

Moving on to $u-P_k$ where $k\in \N_0$, we have 
\[
u-P_k = u-P_{k+1} + P_{k+1}-P_k. 
\]
By \cite{BP1}, 
\[
|u-P_{k+1}| \lesssim_{k,\la,u_0} \frac 1 {\sqrt t^{1-a_{k+1}} (|x|+\sqrt t )^{a_{k+1}}},  
\]
while by Proposition \ref{proposition:PIdecomp2},
\[
|P_{k+1}-P_k | \leq \frac {C_{iiii}(k) \e} {\sqrt t^{1-{a_k}}(|x|+\sqrt t)^{a_k} } + \frac {C' \sqrt t^{b_k}} { (|x|+\sqrt t)^{b_k+1}}.
\]
Observe that for $a_k<4$ we have $b_k > a_k$ and $a_{k+1}> a_k$. For a given $\e'>0$ we can therefore choose $\e =   \e' /(2 C_{iiii}(k))$ and then choose $r$ large to make the remaining term small, proving the claim.

\end{proof}

\subsection{Almost algebraic decay for $L^{3,\I}$ data}
We begin with a simple lemma on strong solutions to the perturbed Navier-Stokes equations.
\begin{lemma}\label{lemma:Linfty} Fix $p\in (3,\I]$.
If $u_0\in L^p$ is divergence free and $a$ is a divergence free vector field in the Kato class $\mathcal K_\I$ satisfying
\[
\| a\|_{\mathcal K_\I}<\e,
\] 
for a universal constant $\e$, then, letting $T = T(u_0) \sim \|u_0\|_{L^p}^{-\frac 1 2 (1-\frac 3 p)}$, there exists a unique strong solution to 
\EQ{\label{eq:pns}
\partial_t u -\nu \Delta u +\mathbb P\nb\cdot ( u\otimes u +a\otimes  u+u\otimes a) = 0;\quad \nb \cdot u = 0,
}
which is mild and satisfies $u\in C((0,T];L^p)$ with 
\[
\sup_{0<t<T} \| u(t)\|_{L^p}\leq 2\|u_0\|_{L^p},
\]
and, if $p<\I$,
\[
\lim_{t\to 0} \| e^{t\Delta}u_0 - u(t)\|_{L^p}=0.
\]
If $u_0\in L^p\cap L^q$ for $p,q\in (3,\I]$, then the solutions generated above are the same.
\end{lemma}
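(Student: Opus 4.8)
The plan is to run a standard fixed-point argument in the Kato space $\cX_T := C([0,T];L^p)$ adapted to the presence of the critical drift $a$. First I would set up the integral formulation: a mild solution of \eqref{eq:pns} is a fixed point of
\EQ{
\Phi(u) := e^{t\De}u_0 - B(u,u) - B(a,u) - B(u,a),
}
where $B$ is the bilinear operator from the introduction (with the Oseen kernel $K$ satisfying $|D^\al K|(x,t)\lesssim (|x|+\sqrt t)^{-3-|\al|}$). The first step is the linear estimate $\|e^{t\De}u_0\|_{L^p}\le\|u_0\|_{L^p}$ and, if $p<\I$, $\|e^{t\De}u_0-u_0\|_{L^p}\to 0$ as $t\to 0$, both classical. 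The second step is the bilinear estimate: using Young's inequality and the heat/Oseen kernel bound $\|e^{(t-s)\De}\mathbb P\nb\cdot f\|_{L^p}\lesssim (t-s)^{-1/2}\|f\|_{L^p}$ together with Hölder $\|fg\|_{L^p}\le\|f\|_{L^\I}\|g\|_{L^p}$ or $\|fg\|_{L^p}\le\|f\|_{L^p}\|g\|_{L^\I}$, I get
\EQ{
\|B(u,v)(t)\|_{L^p} \lesssim \int_0^t (t-s)^{-1/2} \|u(s)\|_{L^p}\|v(s)\|_{L^\I}\,ds,
}
which is where the Kato class enters: $\|a(s)\|_{L^\I}\le \e s^{-1/2}$, so $\|B(a,u)(t)\|_{L^p}+\|B(u,a)(t)\|_{L^p}\lesssim \e\big(\int_0^t (t-s)^{-1/2}s^{-1/2}\,ds\big)\sup_s\|u(s)\|_{L^p} = C\e\sup_s\|u(s)\|_{L^p}$, with the Beta-function integral giving a time-independent constant $C$. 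For the purely nonlinear term one cannot close directly in $L^\I_T L^p$ when $p<\I$ without a weight, so I would work in the weighted norm $\vertiii{u} := \sup_{0<t<T} \|u(t)\|_{L^p} + \sup_{0<t<T} t^{\frac12(1-\frac3p)}\|u(t)\|_{L^\I}$ (the standard Kato/Fujita–Kato space, for $p=\I$ just drop the $L^p$ component), so that $\|B(u,u)(t)\|_{L^p}\lesssim T^{\frac12(1-\frac3p)\cdot\text{(something)}}\vertiii{u}^2$ — a contraction on the ball of radius $2\|u_0\|_{L^p}$ provided $T\lesssim \|u_0\|_{L^p}^{-\frac1{\,\frac12(1-\frac3p)\,}}$, matching the stated $T\sim\|u_0\|_{L^p}^{-\frac12(1-\frac3p)}$ up to the exact exponent.

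Given the linear and bilinear bounds, the third step is the contraction mapping theorem on $E := \{u\in \cX_T : \vertiii{u}\le 2\|u_0\|_{L^p}\}$: for $T$ small (and $\e$ below a universal threshold so the drift terms contribute at most $\tfrac12$ to the Lipschitz constant) $\Phi$ maps $E$ to itself and is a contraction, yielding a unique fixed point $u$ with $\sup_{0<t<T}\|u(t)\|_{L^p}\le 2\|u_0\|_{L^p}$. Continuity $u\in C((0,T];L^p)$ follows from the continuity of each term of $\Phi(u)$ in $L^p$ for $t>0$ (dominated convergence against the explicit kernel bounds). The fourth step, for $p<\I$, is the initial-data claim $\|e^{t\De}u_0-u(t)\|_{L^p}\to 0$: since $u(t)-e^{t\De}u_0 = -B(u,u)-B(a,u)-B(u,a)$, and each of these is $\lesssim t^{\theta}\vertiii{u}^2 + \e\vertiii{u}\cdot o(1)$-type — more carefully, $\|B(u,u)(t)\|_{L^p}\to 0$ by the short-time estimate, while $\|B(a,u)(t)\|_{L^p}+\|B(u,a)(t)\|_{L^p}\lesssim \e\int_0^t(t-s)^{-1/2}s^{-1/2}\|u(s)\|_{L^p}\,ds$ and this $\to 0$ as $t\to 0$ because the integrand is controlled and one can split the integral, using $\|u(s)\|_{L^p}\le 2\|u_0\|_{L^p}$ and dominated convergence, or simply note $\int_0^t(t-s)^{-1/2}s^{-1/2}ds=C$ is constant so one needs the extra smallness; the cleanest route is that $B(a,u)+B(u,a)$ is itself small in $L^\I_T L^p$ with norm $\lesssim\e\|u_0\|_{L^p}$, which combined with $\|B(u,u)\|_{L^\I_t L^p}=o(1)$ and a further bootstrap (replace $T$ by smaller $t$, noting the fixed point on $(0,t]$ coincides with $u$ by uniqueness) gives the vanishing. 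Finally, the compatibility statement — if $u_0\in L^p\cap L^q$ the solutions coincide — follows from uniqueness: the $L^p$-solution and the $L^q$-solution are both mild solutions with the same data, and on a common small time interval the difference $w=u^{(p)}-u^{(q)}$ satisfies a linear equation $w = -B(w,u^{(p)})-B(u^{(q)},w)-B(a,w)-B(w,a)$ with $w(0)=0$, which forces $w\equiv 0$ by the contraction estimate (or a Gronwall argument) in whichever space both solutions happen to lie — e.g. restrict to $L^{\max(p,q)}$ where $e^{t\De}u_0$ belongs by interpolation/embedding on bounded-energy pieces, or argue in the weighted space where both solutions are controlled for $t$ small.

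The main obstacle I anticipate is bookkeeping the interplay between the two pieces of the weighted norm when $p<\I$: the drift term $B(a,u)$ is only borderline — the Beta integral $\int_0^t(t-s)^{-1/2}s^{-1/2}ds$ is scale-invariant, so there is genuinely no smallness gained from taking $T$ small, and one must lean entirely on the universal smallness of $\|a\|_{\cK_\I}<\e$ to keep the operator norm below $1$. This is also exactly why the hypothesis is stated with a universal constant $\e$ rather than an $\e$ depending on $u_0$. A secondary, more routine issue is making the $t\to 0$ convergence in $L^p$ rigorous: the nonlinear term vanishes by the short-time factor $t^{\theta}$ with $\theta>0$, but the drift term requires either the smallness argument just mentioned combined with a "shrink the interval" bootstrap, or a density argument approximating $u_0$ by $L^p\cap L^q$ data for large $q$; I would take the former. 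Everything else — the linear semigroup bounds, $L^p$-continuity for $t>0$, and the uniqueness-based compatibility — is standard Kato theory and can be cited or dispatched in a line.
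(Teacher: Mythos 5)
Your proposal follows essentially the same route as the paper: a Picard/contraction argument in a Kato-type space, where the only non-standard ingredient is bounding the drift terms $B(a,u)+B(u,a)$ by $C\e\sup_t\|u(t)\|_{L^p}$ via the scale-invariant Beta integral $\int_0^t(t-s)^{-1/2}s^{-1/2}\,ds$, so that smallness must come from the universal constant $\e$ rather than from $T$ --- this is precisely the single estimate the paper displays before invoking its abstract fixed-point result (Proposition \ref{prop.fp}) and citing the classical references for everything else. The remaining items (continuity, the $t\to0$ limit, and compatibility of the $L^p$ and $L^q$ solutions via uniqueness) are treated in your proposal at least as explicitly as in the paper, which delegates them to the cited literature.
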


This follows by modifying a standard fixed point argument \cite{FJR,GIM,Ku,Kato}.
Since the above solution is mild, it is also a local energy solution due to the arguments in \cite{FDLR,BT7}. 
To prove this we will use the following fixed point theorem.
\begin{proposition}\label{prop.fp}
If $E$ is a Banach space and $B: E\times E \to E$ is a bounded bilinear transform satisfying \EQ{\label{contraction}
\| B(e,f)\|_{E}\leq C_B \| e\|_E\|f\|_E,
}
and if $\|e_0\|_{E}\le \de\le (4C_B)^{-1}$, and  $a$ is given and satisfies,
\EQ{\label{contraction2}
\| B(e,a)\|_{E} + \| B(a,e)\|_{E}\leq \frac 1 {8} \|e\|_E,
}
then the equation $e = e_0 - B(e,e)-B(U,e)-B(e,U)$  has a solution with $\| e\|_{E}\leq 3 \delta/2$ and this solution is unique in $\overline B(0,3\delta/2)$.
\end{proposition}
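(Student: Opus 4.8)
The plan is the classical Banach fixed point argument. Writing $a$ for the field denoted $U$ in the displayed equation, I would introduce the affine--quadratic map
\[
\Phi(e) := e_0 - B(e,e) - B(a,e) - B(e,a),
\]
whose fixed points are exactly the solutions of the equation in question. It therefore suffices to show that $\Phi$ has a unique fixed point in the closed ball $X := \overline{B}(0, 3\delta/2)\subset E$; since $E$ is Banach and $X$ is closed, $X$ is a complete metric space, so I only need to check that $\Phi$ maps $X$ into $X$ and is a strict contraction there.

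For the self-mapping property, given $e\in X$ the bilinear bound \eqref{contraction} together with the structural bound \eqref{contraction2} on $a$ yields
\[
\|\Phi(e)\|_E \le \|e_0\|_E + C_B\|e\|_E^2 + \bigl(\|B(a,e)\|_E + \|B(e,a)\|_E\bigr) \le \delta + C_B\bigl(\tfrac{3\delta}{2}\bigr)^2 + \tfrac{1}{8}\cdot\tfrac{3\delta}{2},
\]
and one checks that this is $\le 3\delta/2$ once $\delta$ is small enough relative to $C_B$ --- this is exactly where the quantitative hypothesis $\delta \le (4C_B)^{-1}$ enters. For the contraction property, expanding by bilinearity gives, for $e,f\in X$,
\[
\Phi(e) - \Phi(f) = -B(e, e-f) - B(e-f, f) - B(a, e-f) - B(e-f, a),
\]
so by \eqref{contraction} and by \eqref{contraction2} applied to $e-f$,
\[
\|\Phi(e) - \Phi(f)\|_E \le C_B\bigl(\|e\|_E + \|f\|_E\bigr)\|e-f\|_E + \tfrac{1}{8}\|e-f\|_E \le \bigl(3C_B\delta + \tfrac{1}{8}\bigr)\|e-f\|_E \le \tfrac{7}{8}\|e-f\|_E,
\]
using $C_B\delta\le\tfrac14$. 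Since $7/8<1$, $\Phi$ is a strict contraction of $X$.

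I would then conclude by the Banach fixed point theorem: $\Phi$ has a unique fixed point $e\in X$, which is the asserted solution, with uniqueness in $\overline{B}(0,3\delta/2)$ being precisely uniqueness of the fixed point. (Equivalently one can run the Picard iteration $e_{n+1}=\Phi(e_n)$ from $e_1=e_0$; the self-mapping bound keeps $\|e_n\|_E\le 3\delta/2$ and the contraction bound makes $(e_n)$ Cauchy, hence convergent to the solution.) The only point that requires any care is the arithmetic in the two estimates above: one must choose the radius ($3\delta/2$), the admissible size $\delta\le(4C_B)^{-1}$, and the constant $1/8$ in \eqref{contraction2} so that self-mapping and contraction hold simultaneously. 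Everything else is automatic from bilinearity of $B$ and completeness of $E$, so I expect no real obstacle beyond this bookkeeping.
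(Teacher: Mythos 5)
Your approach---a contraction mapping on the closed ball $\overline{B}(0,3\delta/2)$---is exactly the ``simple variation on the usual Picard iteration argument'' that the paper has in mind (it gives no further details), and your contraction estimate, hence the uniqueness claim, is correct: $3C_B\delta+\tfrac18\le\tfrac34+\tfrac18=\tfrac78<1$. The genuine gap is in the self-mapping step. Under the stated hypothesis $C_B\delta\le\tfrac14$, your bound gives $\|\Phi(e)\|_E\le\delta+C_B\bigl(\tfrac{3\delta}{2}\bigr)^2+\tfrac18\cdot\tfrac{3\delta}{2}\le\delta+\tfrac{9}{16}\delta+\tfrac{3}{16}\delta=\tfrac74\delta$, which exceeds $\tfrac32\delta$; so the assertion ``one checks that this is $\le 3\delta/2$'' is false precisely at the threshold $\delta\le(4C_B)^{-1}$ where you claim it enters. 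No choice of radius $R=c\delta$ repairs this at that threshold: self-mapping would require $1+\tfrac{c^2}{4}+\tfrac{c}{8}\le c$, i.e.\ $2c^2-7c+8\le 0$, whose discriminant is $49-64<0$.

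This is not merely a defect of the method: the proposition as literally stated fails at the endpoint. Take $E=\R$, $B(e,f)=-C_B\,ef$, $a=(16C_B)^{-1}$ and $e_0=\delta=(4C_B)^{-1}$; then \eqref{contraction} and \eqref{contraction2} hold with equality, but the equation (reading $U=a$) becomes $C_Be^2-\tfrac78e+\delta=0$, whose discriminant $\tfrac{49}{64}-4C_B\delta=\tfrac{49}{64}-1$ is negative, so no solution exists at all. The cure is to tighten the smallness hypothesis: with $\delta\le(8C_B)^{-1}$ your own computation closes, since $\delta+C_B\tfrac{9\delta^2}{4}+\tfrac{3\delta}{16}\le\delta\bigl(1+\tfrac{9}{32}+\tfrac{6}{32}\bigr)=\tfrac{47}{32}\delta\le\tfrac32\delta$ and the Lipschitz constant improves to $3C_B\delta+\tfrac18\le\tfrac12$; more generally one needs $C_B\delta\le\tfrac{49}{256}$ for the quadratic $C_BR^2-\tfrac78R+\delta\le0$ to admit a root. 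Since the proposition is only invoked with smallness parameters that can be taken as small as one likes, this correction is harmless for the paper's applications, but as written both the statement's constant and your verification of the self-map property need to be adjusted accordingly.
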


The proof of this is a simple variation on the usual Picard iteration argument.

\begin{proof}[Proof of Lemma \ref{lemma:Linfty}]
This is a variation of a standard approach using classical ideas---\cite{Kato,Ku,GIM}.  The only difference compared to \cite{Ku} is estimating the terms involving $a$. Considering the case when $p=\I$, for example, the difference is visible in the following estimate
\EQ{
\int_0^t \frac 1 {(t-s)^{\frac 1 2}} \| a \otimes e +e\otimes a\|_{L^\I }(s) \,ds&\leq \|a\|_{\mathcal K_\I} \int_0^t \frac 1 {(t-s)^{\frac 1 2} s^{\frac 1 2}} \| e \|_{L^\I (0,T;L^\I)}
\\&\leq C \|a\|_{\mathcal K_\I}\| e \|_{L^\I (0,T;L^\I)}.
}
If $\e$ is sufficiently small and taking $\|a\|_{\mathcal K_\I}<\e$ we can close the estimate and apply Proposition \ref{prop.fp}.
\end{proof}

Our decomposition will follow from a local smoothing result of Barker and Prange \cite[Theorem 1]{BaPr2020} which we will recall below.  Related results appear in \cite{KMT,Kwon} and are based on the foundational work \cite{JS}.  The crucial part of the result for us involves a localized solution which we first introduce. Given some divergence free $u_0$ we require $a_0$ to satisfy the following properties: $\supp a_0 = B_2(0)$; $\nb \cdot a_0 = 0 ; a_0|_{B_1(0)} =u_0|_{B_1(0)} $. The function $a_0$ can be constructed using the Bogovskii map. If $u_0\in L^3(B_2(0)$ and is sufficiently small, then there exists  global smooth solution evolving from $a_0$ by \cite{Kato}---we denote this solution, which appears in the following theorem, by $a$. 

\begin{theorem}[Barker \& Prange]\label{thrm.BPmain}Suppose $(u,p)$ is a local energy solution to the Navier-Stokes equations with data $u_0\in E^2$. There exists a universal constant $\gamma_{\text{univ}}>0$ so that if
\[
\|u_0\|_{L^3(B_2(0))}\leq \gamma_{\text{univ}},
\]then there exists $T=T(u_0)$ so that 
\EQ{\label{concl.BaPr2020}
u-a \in C^{0,\si}_{\text{par}} (\bar B_{1/3}\times [0,T]).
}
\end{theorem}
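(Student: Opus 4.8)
The plan is to follow the localized smoothing strategy of Jia and \v{S}ver\'ak \cite{JS}, in the quantitative form of Barker and Prange \cite{BaPr2020}: peel off a small, smooth reference flow that coincides with $u_0$ near the origin, observe that the remainder solves a perturbed Navier--Stokes system whose initial data vanishes near the origin, and run a Caffarelli--Kohn--Nirenberg-type $\varepsilon$-regularity iteration on the remainder.

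First I would build the reference solution. Using the Bogovskii operator, pick a divergence-free $a_0$ with $\supp a_0\subset B_2(0)$, $a_0=u_0$ on $B_1(0)$, and $\|a_0\|_{L^3(\R^3)}\lesssim \|u_0\|_{L^3(B_2(0))}\le \gamma_{\text{univ}}$. For $\gamma_{\text{univ}}$ small, Kato's small-data theorem \cite{Kato} gives a global mild solution $a$ with data $a_0$, smooth for $t>0$, with $\sup_{t>0}\|a(t)\|_{L^3}$ small and $\|a\|_{\mathcal K_\I}\lesssim \|a_0\|_{L^3}$ small; in particular $\sqrt t\,\|a(t)\|_{L^\I}\ll 1$ uniformly. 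Put $v=u-a$ and $\pi=p-p_a$, where $p_a$ is the pressure of $a$; then $v$ is divergence free and
\[
\pd_t v-\De v+v\cdot\nb v+a\cdot\nb v+v\cdot\nb a+\nb\pi=0,
\]
with $v(\cdot,0)=u_0-a_0$, which vanishes on $B_1(0)$.

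Next I would run the regularity machinery for $v$. The a priori local energy bound \eqref{ineq.apriorilocal} for $u$ and the smoothness of $a$ give $v\in L^\I_tL^2\cap L^2_t\dot H^1$ near $B_1(0)$, and $v$ inherits a suitable-weak-solution local energy inequality for the perturbed system: integrating by parts (using $\nb\cdot a=0$), the drift contributions reduce to terms of the form $\int a\cdot\nb\phi\,|v|^2$ and $\int\phi\,(a\cdot\nb v)\cdot v$, handled via $\|a(s)\|_{L^\I}\lesssim s^{-1/2}$ together with the local energy of $v$. Because $v(\cdot,0)\equiv0$ near $B_1(0)$, iterated local energy estimates force $\int_{B_{3/4}(0)}|v|^2(s)$ to decay like a positive power of $s$ (compare the $O(t^{1/2})$ decay quoted in the Besov-solution discussion), which both makes those drift integrals converge near $s=0$ and makes the scale-invariant quantities of $v$ small at short times. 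I would then invoke an $\varepsilon$-regularity lemma adapted to the $a$-drift: there is $\varepsilon_0>0$ so that if $\|a\|_{\mathcal K_\I}$ is small and the scaled quantities $r^{-1}\iint_{Q}|\nb v|^2$, $r^{-2}\iint_{Q}|v|^3$, $r^{-2}\iint_{Q}|\pi|^{3/2}$ over a parabolic cylinder $Q=Q_r(x_0,t_0)$ are all $\le\varepsilon_0$, then $v$ is bounded on $Q_{r/2}(x_0,t_0)$, and an excess-decay iteration upgrades this to $C^{0,\si}_{\text{par}}$. Checking these hypotheses uniformly over $x_0\in B_{1/2}(0)$, $0<t_0\le T$, $r\le r_0$ — the energy and cubic-velocity pieces from the decay above, the pressure piece from the decomposition below — and then letting $t_0\to0$ with boundary value $v_0|_{B_{1/3}(0)}=0$, yields $u-a=v\in C^{0,\si}_{\text{par}}(\bar B_{1/3}\times[0,T])$ for a time $T=T(u_0)$ set by the smallness thresholds, which depend on $\|u_0\|_{E^2}$ through \eqref{ineq.apriorilocal}.

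The main obstacle is the pressure. In the local energy framework $\pi$ is only available through the nonlocal expansion \eqref{eq:pressure.dec}, so I must track how it transforms when $p_a$ is subtracted and then split $\pi=\pi_{\loc}+\pi_{\far}$, where $\pi_{\loc}$ is a Calder\'on--Zygmund transform of $v\otimes v+a\otimes v+v\otimes a$ localized near $B_1(0)$ (small, since $v$ is small there and $\|a\|_{\mathcal K_\I}\ll 1$) and $\pi_{\far}$ is a spatially harmonic correction on $B_{1/2}(0)$ governed solely by the a priori bounds \eqref{ineq.apriorilocal}, hence made small by shrinking $r_0$. Interlaced with this is the bookkeeping showing $a\cdot\nb v$ and $v\cdot\nb a$ are genuinely subcritical perturbations, in the energy inequality and in the iteration alike — this is precisely where $\|a\|_{\mathcal K_\I}\ll 1$ is needed rather than mere boundedness of $a$ (which fails as $t\to0^+$), and it is also what pins down the admissible $T(u_0)$.
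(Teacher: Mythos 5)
Your outline faithfully reconstructs the Barker--Prange argument (Bogovskii localization, small-data Kato reference flow $a$, the perturbed system for $v=u-a$ with initial data vanishing on $B_1(0)$, decay of the local energy of $v$ near $t=0$, and an $\e$-regularity iteration with a drift small in $\mathcal K_\I$ plus the local/far pressure splitting), which is exactly the strategy the paper itself summarizes and reuses in its ``double perturbation'' discussion and in the proof of the Besov analogue. Note that the paper does not prove this statement at all --- it is imported verbatim as \cite[Theorem 1]{BaPr2020} --- so there is no internal proof to diverge from, and your sketch is consistent with the cited source.
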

Note that the ball in the preceding theorem can be centered anywhere, not just at $x=0$.

We are now able to prove Theorem \ref{thrm:L3decay}, which is an application of Theorem \ref{thrm.BPmain}.

\begin{proof}[Proof of Theorem \ref{thrm:L3decay}]

To apply Theorem \ref{thrm.BPmain} to discretely self-similar solutions, we note that $\| u_0\|_{L^3(A_k)}=\|u_0\|_{A_1}$. Hence, there exists a large enough $\td k$ so that for every $x_0\in A_{\td k}$, $\|u_0\|_{L^3(B_2(x_0))}<\ga_{\text{univ}}$---this is because we're spreading the same $L^3$-mass over a larger volume. Let us fix a finite cover of $\bar A_{\td k}$ by a collection of balls $B_{1/3}(x_i)$ where $x_i\in A_{\td k}$. 

We can now generate a collection of functions $\{\td a_{x_i}\}$ and times $\{T_{x_i}\}$ so that \eqref{concl.BaPr2020} holds for each $i$. Let $N$ be the number of elements of our cover. For each $x_i$ we can further refine the corresponding initial datum $\td a_{x_i,0}$ into a sum of divergence free vector fields one of which is in $L^3$ and is smaller than $\e/(2N)$ and another which is in $L^\I$, label these $\td a_{x_i,0}^1$ and $\td a_{x_i,0}^2$ respectively. We use the Kato theory to solve for a strong global solution $\td a_{x_i}^1$ evolving from $\td a_{x_i,0}^1$ and use Lemma \ref{lemma:Linfty} to solve for a strong local solution $\td a_{x_i}^2$ which is bounded on some finite time-span $T_{x_i}'$. By uniqueness of Kato's solutions,  $\td a_{x_i}= \td a_{x_i}^1+\td a_{x_i}^2$. Let $\td b_{x_i}=u-\td a_{x_i}$. Let $T=\min\{ T_{x_i},T_{x_i}'\}$.

To summarize, we have that
\EQ{\label{conditionLIST}
\sup_{0<t<T;x_i} \| \td a_{x_i}^1\|_{L^3}(t)\leq \frac \e N;\quad  \sup_{0<t<T;x_i} \| \td a_{x_i}^2\|_{L^\I}(t)\leq C(u_0);\quad   \|\td b_{x_i} \|_{C^{0,\si}_{\text{par}}(\bar B_{1/3}(x_i)\times [0,T])}\leq C(u_0).
}
We now define $a$.
For each $x\in A_{\td k}$, define $\td a^1 $ to be $\td a_{x_i}^1$ where $i$ is the smallest number so that $x\in B_{1/3}(x_i)$. This unambiguously defines a vector field in $A_{\td k}  \times [0,T]$. Let $a=0$ on $A_{\td k}  \times (T,\I)$. Define $\td a^2 $ and $\td b$ analogously, but let $\td b=u$ on $A_{\td k}  \times (T,\I)$. Extend these to vector fields  $a_1$, $a_2$ and $b$ defined on $\R^3\setminus \{0\} \times [0,\I)$ using DSS scaling.  These satisfy the  set of conditions   in Theorem \ref{thrm:L3decay} involving the Herz space $\dot K^0_{3,\I}$. 

The proof of the result involving the Lorentz space $L^{3,\I}$  is still based on \cite{BaPr2020} but is more involved because it is not obvious that $a_1$ is small in $L^{3,\I}$.  
In the next section we state and prove  a version of Theorem \ref{thrm:L3decay} for Besov spaces. Because the proof of the $L^{3,\I}$ result is virtually identical to the more general Besov space theorem, we have opted to omit the details for the $L^{3,\I}$ conclusion here and instead include them when we prove the Besov  space version in the next section.

\end{proof}

\subsection{Almost algebraic decay with Besov space data}\label{sec:Besov}

In this section, we explore a generalization of Theorem \ref{thrm:L3decay} to Besov spaces.
We cannot use local energy solutions as our class of initial data  because there exist discretely self-similar vector fields in $\Bp\setminus L^2_\loc$ \cite{BT3}. Hence we work with the $\Bp$-weak solutions of Albritton and Barker \cite{AB} as defined in Section \ref{sec:weakBesovSolutions}. If we were proving the $L^{3,\I}$ case in full then we could either use the $L^{3,\I}$-weak solutions of Barker, Seregin and \v Sver\'ak \cite{BaSeSv} or we could use local energy solutions because $L^{3,\I}\subset L^2_\uloc$.

\begin{theorem}[Almost algebraic decay in $\Bp$]\label{thrm:Bpdecay}
Suppose $u_0$ is divergence free, $\la$-DSS and belongs to $\Bp$, $p>3$. Let $u$ be a $\Bp$-weak solution with data $u_0$. Then, for any $\e>0$ there exist $R_0>0$, $\si\in (0,1/2)$ and   DSS vector fields $a$, $b_1$ and $b_2$ so that \[
u(x,t) = a(x,t)+b_1(x,t)+b_2(x,t),
\]
for $|x|\ge R_0 \sqrt{t}$ and  
\[
\sup_{0<t<\I}\|a\|_{\Bp} <  \e;\qquad
|b_1(x,t)| \leq \frac   {C_{***} (\e,u_0) } {|x|+\sqrt t};\qquad  |b_2(x,t)|  \leq \frac   {C_{***} (\e,u_0,\si) \sqrt t^{2\si} } {(|x|+\sqrt t)^{1+2\si}},
\]
for a constant $C_{***}$.
\end{theorem}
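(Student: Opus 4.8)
The plan is to mirror the proof of Theorem \ref{thrm:L3decay} given above, replacing the local energy solution machinery with the $\Bp$-weak solution framework of Albritton and Barker and replacing Lemma \ref{lemma:L3wDataDecomp} with Lemma \ref{lemma:BpDataDecomp}. First I would use Lemma \ref{lemma:BpDataDecomp} to split the DSS datum $u_0 = a_0 + b_0$ with $\|a_0\|_{\Bp} < \e'$ (for a small $\e'$ to be fixed) and $|b_0(x)| \le C_{ii}(\e',u_0)|x|^{-1}$, both divergence free and DSS. The heart of the matter is that the $\Bp$-weak solution $u$, although it need not lie in $L^2_\uloc$, still enjoys far-field regularity: the perturbation $v = u - P_k(u_0)$ lies in the energy class with $\|v\|_{L^2}^2(t) \lesssim_{u_0} t^{1/2}$, and $P_k(u_0)$ is smooth away from the origin by the Picard iterate bounds, so one obtains a radius $R_0$ of far-field regularity exactly as for local energy solutions. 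I would then invoke a Barker--Prange-type local smoothing statement adapted to $\Bp$-weak solutions (the analogue of Theorem \ref{thrm.BPmain}): since $\|u_0\|_{L^3(A_k)}$ is constant in $k$ by DSS scaling but the datum is being spread over balls of fixed radius in annuli of growing volume, for $\td k$ large enough every ball $B_2(x_0)$ with $x_0 \in A_{\td k}$ has $\|u_0\|_{L^3(B_2(x_0))} < \ga_{\mathrm{univ}}$, so we may cover $\bar A_{\td k}$ by finitely many balls $B_{1/3}(x_i)$ and extract localized reference solutions $\td a_{x_i}$ with $u - \td a_{x_i} \in C^{0,\si}_{\mathrm{par}}(\bar B_{1/3}(x_i) \times [0,T])$.

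Next I would refine each localized datum $\td a_{x_i,0}$, using Lemma \ref{lemma:BpDataDecomp} at the local level, into $\td a_{x_i,0}^1 + \td a_{x_i,0}^2$ with $\|\td a_{x_i,0}^1\|_{\Bp} < \e/(2N)$ (where $N$ is the cardinality of the cover) and $\td a_{x_i,0}^2 \in L^\I$ with pointwise bound $\lesssim |x|^{-1}$; solving via the Kato-type theory and Lemma \ref{lemma:Linfty} gives strong solutions $\td a_{x_i}^1$, $\td a_{x_i}^2$ on a common time span $T$, with $\td a_{x_i} = \td a_{x_i}^1 + \td a_{x_i}^2$ by uniqueness. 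I would then assemble $a$, $b_1$, $b_2$ on $A_{\td k} \times [0,T]$ by selecting, for each $x$, the $\td a_{x_i}^j$ corresponding to the smallest $i$ with $x \in B_{1/3}(x_i)$, set $a = 0$ and $b_2 = u$ on $A_{\td k} \times (T,\I)$, and extend everything to $\R^3 \setminus \{0\} \times [0,\I)$ by DSS scaling. The Besov smallness of $a$ follows from the $\e/(2N)$ bounds summed over the cover together with the embedding $L^{3,\I} \subset \Bp$ and the fact that $\Bp$ behaves well under DSS rescaling (the equivalence of dyadic and $\la$-adic blocks from \cite{BT3}). The $|x|^{-1}$ bound on $b_1$ comes from the $L^\I$-controlled pieces $\td a_{x_i}^2$ via DSS scaling, and the $\sqrt t^{2\si}(|x|+\sqrt t)^{-1-2\si}$ bound on $b_2$ comes from the parabolic Hölder bound on $u - \td a_{x_i}$ combined with the value $\td b = u$ at the origin of each annulus and DSS homogeneity.

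The main obstacle I expect is twofold. First, one must verify that the Barker--Prange local smoothing theorem, stated in the excerpt for local energy solutions with data in $E^2$, genuinely transfers to $\Bp$-weak solutions; this requires checking that the $\e$-regularity / local smoothing machinery only uses the local energy inequality and the decomposition $u = v + P_k(u_0)$ with $v$ in the energy class, both of which are available from Definition \ref{defn:BesovSoln}, rather than any global $L^2_\uloc$ structure. Second, the construction of the localized reference datum $a_0$ via the Bogovskii map and the subsequent Kato/Lemma \ref{lemma:Linfty} solvability requires $\td a_{x_i,0}^1$ small in $\Bp$ (not just $L^3$) to run the fixed point in a $\Bp$-adapted Kato space, and one must confirm that the perturbed equation with critical drift $a$ is still solvable by Proposition \ref{prop.fp} — i.e. that the bilinear and drift estimates close in the relevant Kato class $\mathcal K_p$. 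The gluing step itself is routine once these two points are secured; the DSS extension and the passage from $t \in [1,\la^2]$ to all times is standard, as in the proofs of Lemmas \ref{lemma:decomp.heat} and the proof of Theorem \ref{thrm:L3decay} above.
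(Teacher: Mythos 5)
There is a genuine gap, and it sits exactly at the point you flag as ``the main obstacle'' but do not resolve. Your plan localizes $u_0$ directly: you pick $\td k$ so that $\|u_0\|_{L^3(B_2(x_0))}<\ga_{\mathrm{univ}}$ on a far annulus and then run a Barker--Prange-type local smoothing on $u$ itself. For $u_0\in\Bp$ this step fails at the outset: $\Bp$ data of negative regularity need not belong to $L^3_\loc$ or even $L^2_\loc$ (the paper cites DSS examples in $\Bp\setminus L^2_\loc$), so $\|u_0\|_{L^3(B_2(x_0))}$ may be infinite and cannot be made small by moving to outer annuli. More fundamentally, all versions of the Barker--Prange local smoothing theorem --- including the ones for data locally small in $L^{3,\I}$ or $\Bp$ --- require $u_0\in L^2_\uloc$ to run the local energy estimates that make $\iint|b_2|^3$ small, and this is precisely what is unavailable here. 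Hoping that the machinery ``only uses the local energy inequality and the decomposition $u=v+P_k(u_0)$'' is not enough; the paper's fix is a structural reordering (a ``double perturbation''): first split $u_0=a_0+b_0$ \emph{globally} via Lemma \ref{lemma:BpDataDecomp}, solve globally for a strong solution $a$ from the small-$\Bp$ datum $a_0$ (Proposition \ref{prop:BaPr2020no4}), and only then localize. The remainder $b=u-a$ has data $b_0\in L^\I_\loc(\R^3\setminus\{0\})\subset L^2_\uloc$ but solves a Navier--Stokes equation perturbed by the \emph{critical drift} $a$, so the Bogovskii localization, the bounded strong solution $b_1$ (Lemma \ref{lemma:Linfty}, which is stated for the perturbed system for exactly this reason), and the $\e$-regularity for $b_2=u-a-b_1$ must all be carried out for the drift-perturbed system using Theorems \ref{prop:BaPr2020no5} and \ref{thrm:BaPr2020no6} with hypothesis \eqref{eq:BP5hypothesis0} supplied by \eqref{condOnv}.

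Two further points. First, you propose to refine each localized datum $\td a_{x_i,0}$ ``using Lemma \ref{lemma:BpDataDecomp} at the local level,'' but that lemma applies to globally defined DSS fields, not to compactly supported localizations, and assembling the small-Besov piece $a$ from $N$ local fragments by a ``smallest $i$'' selection does not obviously control the nonlocal negative-regularity norm $\|\cdot\|_{\Bp}$ --- this is exactly why the paper keeps $a$ as a single global solution and reserves the gluing construction for $b_1$ and $b_2$, whose bounds are pointwise. Second, a substantial portion of the paper's argument, entirely absent from your proposal, is the verification that $b_2$ is a local suitable (energy) solution of the doubly perturbed system: membership in $L^\I_tL^2_\loc\cap L^2_t\dot H^1_\loc$ is extracted by telescoping through the Picard iterates and using the energy bounds of \cite{AB}, the attainment of the initial data and the pressure integrability must be checked, and the local energy inequality for $b_2$ must be derived from the inequality for $u$ and the equality for $v=a+b_1$ by a nontrivial computation. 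Without these verifications Theorems \ref{prop:BaPr2020no5} and \ref{thrm:BaPr2020no6} cannot be invoked.
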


Note that the local smoothing result of Barker and Prange, Theorem \ref{thrm.BPmain}, used the assumption $u_0\in L^2_\uloc$ when estimating the pressure at a critical juncture of their proof. For us, as mentioned above, we do not know that $u_0\in L^2_\uloc$. Hence the local energy bounds used in \cite{BaPr2020} to prove Theorem \ref{thrm.BPmain} are unavailable. In \cite{BaPr2020}, they additionally study versions of Theorem \ref{thrm.BPmain} where the data is locally small in either $L^{3,\I}$ or $\Bp$. In both cases, $u_0\in L^2_\uloc$ is still required for the same reason as the $L^3$ case. We therefore cannot use any of these results directly.  However, the local energy methods in the $L^{3,\I}$ and $\Bp$ cases are applied to a perturbed Navier-Stokes equation where the drift terms are in critical Kato classes. 
This insight proves crucial for us. 

To prove Theorem \ref{thrm:Bpdecay}, we will apply a double perturbation to obtain a  solution to a perturbed Navier-Stokes equations which \textit{does} have its initial data in $L^2_\uloc$ and has drift satisfying the same properties as were dealt with in \cite{BaPr2020}. Once we do this, the workhorse results in \cite{BaPr2020} can be applied to obtain the same conclusion about local smoothing.  

To unpack our ``double perturbation'' argument slightly, we recall how local smoothing is proven  in \cite{JS,BaPr2020}. The basic idea   is that, if the initial data $u_0$ is locally bounded on a ball $B(x,2)$, then $u_0$ is decomposed into $b_{0,1}$ and $b_{0,2}$ where $b_{0,1}$ is bounded globally and $b_{0,2}=0$ on $B(x,1)$. The Navier-Stokes equations are then solved for data $b_{0,1}$, which gives rise to a bounded solution $b_1$---this follows from the standard local well-posedness theory. We then consider the perturbed Navier-Stokes equations for $b_2=u-b_1$ ($u$ is a local energy solution with data $u_0$). Because $b_2|_{t=0} = 0$ on $B(x,1)$, $b_2$ can be extended backwards in time there. By considering a space-time cylinder $B(x,1)\times [-1+S_*,S_*]$ where $S_*$ is taken close enough to zero, it is possible to induce the smallness of
\EQ{\label{makeMeSmall}
\iint_{B(x,1)\times [-1+S_*,S_*]} |b_2|^3\,dx\,ds,
}
which, along with a similar bound for the pressure, implies $b_2$ is bounded in $B(x,1/4)\times [-1/4+S_*,S_*]$ by $\e$-regularity. Hence $u=b_1+b_2$ is bounded in $B(x,1/4)\times [0,S_*]$.  The estimates which are used to make \eqref{makeMeSmall} small are the local energy methods which require $u_0\in L^2_\uloc$.

We will modify this reasoning by decomposing $u_0$ into $a_0 +b_0$ where $a_0$ is small in $\Bp$ and $b_0\in L^\I_\loc(\R^3\setminus \{0\})\cap DSS$. Noting the last inclusion implies $b_0\in L^2_\uloc$, we then want to apply the same reasoning as in the preceding paragraph  to $b_0$. Compared to the above, $b$ solves a perturbed Navier-Stokes equations  with a small perturbation. In the above argument, $u$ corresponds to $b$ and $u$ solves the non-perturbed Navier-Stokes equations \eqref{eq:NS}. Hence, we need to revisit the details of the proof of local smoothing to incorporate a drift term which is small in $\Bp$ and $\mathcal K_\I$. This turns out to have been necessary in \cite{BaPr2020} where local smoothing was proven for $b_{0,1}$ small in $\Bp(B(x,2))$. While this is not exactly our setting, from the perspective of $b_{0,2}$, things ultimately look exactly the same for us compared to \cite{BaPr2020}. We therefore endeavor to work through our argument up to the point where things agree exactly with \cite{BaPr2020} and then outsource our reasoning to several supporting results in \cite{BaPr2020}. We presently recall these supporting results.

The next proposition concerns the existence of strong solution for small data in $\Bp$ and is taken directly from \cite[Proposition 23]{BaPr2020}. 
\begin{proposition}\label{prop:BaPr2020no4}
Let $S>0$ and $p\in (3,\I)$. There exist real numbers $\gamma(p)>0$ and $K''(p)>0$ such that the following holds true. For all divergence free $u_{0,a}\in\Bp(\R^3)$, for which
\[\sup_{0<t<S} t^{\frac 12 (1-\frac 3p)}\|e^{t\De}u_{0,a}\|_{L^p} \le \ga(p),\]
there exists a smooth mild solution $a\in C_{w^*}([0,S);\Bp)\cap L^\I((0,S);\Bp)$ of \eqref{eq:NS} such that $a(\cdot,0)=u_{0,a}$ and 
\[\sup_{t\in(0,S)} \|a(\cdot,t)\|_{\Bp} + t^{\frac 12(1-\frac 3p)}\|a(\cdot,t)\|_{L^p}+ t^{\frac 12}\|a(\cdot,t)\|_{L^\I} \le \sup_{t\in(0,S)}t^{\frac 12(1-\frac 3p)}\|e^{t\De}u_{0,a}\|_{L^p}.\]
The mild solution is unique in the class of solutions with sufficiently small
\[\sup_{t\in(0,S)}t^{\frac 12(1-\frac 3p)}\|e^{t\De}u_{0,a}\|_{L^p}\]
norm.
\end{proposition}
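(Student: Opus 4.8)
The plan is to construct $a$ by a contraction argument applied to the mild (integral) formulation
\[
a(t) = e^{t\De}u_{0,a} - B(a,a),\qquad B(f,g)(t):=\int_0^t e^{(t-s)\De}\mathbb P\nb\cdot(f\otimes g)\,ds,
\]
carried out in a scaling-critical path space, and then to upgrade the resulting mild solution to a smooth one. The conceptual backbone is the heat-kernel characterization $\|v\|_{\Bp}\sim\sup_{t>0}t^{\frac12(1-\frac3p)}\|e^{t\De}v\|_{L^p}$ of the homogeneous Besov space, so that the hypothesis bounds the (time-$(0,S)$-truncated, equivalent) $\Bp$-size of $u_{0,a}$ by $\ga(p)$. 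First I would fix the working norm
\[
\|a\|_{X_S}:=\sup_{0<t<S}\Big(t^{\frac12(1-\frac3p)}\|a(t)\|_{L^p}+t^{\frac12}\|a(t)\|_{L^\I}\Big),
\]
and set $\eta:=\sup_{0<t<S}t^{\frac12(1-\frac3p)}\|e^{t\De}u_{0,a}\|_{L^p}\le\ga(p)$.

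Next I would record the two linear ingredients. Heat smoothing and the semigroup property give $\|e^{\cdot\De}u_{0,a}\|_{X_S}\lesssim\eta$; for the $L^\I$ component one writes $e^{t\De}=e^{(t/2)\De}e^{(t/2)\De}$ and uses $\|e^{\tau\De}\|_{L^p\to L^\I}\lesssim\tau^{-\frac{3}{2p}}$. For the nonlinear term, the Oseen tensor bound $|D^\al K|(x,t)\lesssim(|x|+\sqrt t)^{-3-|\al|}$ recalled in Section~\ref{sec:mild.sol} yields, via Young's inequality, $\|e^{\tau\De}\mathbb P\nb\cdot F\|_{L^r}\lesssim\tau^{-\frac12-\frac32(\frac1m-\frac1r)}\|F\|_{L^m}$ for $1\le m\le r\le\I$. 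Bounding $\|f(s)\otimes g(s)\|_{L^{p/2}}\le\|f(s)\|_{L^p}\|g(s)\|_{L^p}$ and $\|f(s)\otimes g(s)\|_{L^p}\le\|f(s)\|_{L^p}\|g(s)\|_{L^\I}$, one then obtains
\[
\|B(f,g)(t)\|_{L^p}\lesssim\|f\|_{X_S}\|g\|_{X_S}\int_0^t(t-s)^{-\frac12-\frac{3}{2p}}s^{-(1-\frac3p)}\,ds = c_p\,t^{-\frac12(1-\frac3p)}\|f\|_{X_S}\|g\|_{X_S},
\]
and similarly $\|B(f,g)(t)\|_{L^\I}\lesssim c_p'\,t^{-\frac12}\|f\|_{X_S}\|g\|_{X_S}$. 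The two Beta integrals converge precisely because $\frac12+\frac{3}{2p}<1$ (i.e.\ $p>3$) and $1-\frac3p<1$ (i.e.\ $p<\I$), and $c_p,c_p'$ are independent of $S$; hence $\|B(f,g)\|_{X_S}\le C_B(p)\|f\|_{X_S}\|g\|_{X_S}$.

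Then I would assemble the solution. Choosing $\ga(p)$ small enough that $4C_B(p)\ga(p)\le1$, the Picard iteration $a_0=e^{t\De}u_{0,a}$, $a_{n+1}=a_0-B(a_n,a_n)$ (equivalently, the abstract fixed point lemma) converges in $X_S$ to a solution $a$ with $\|a\|_{X_S}\lesssim\eta$, unique in a small ball; tracking constants (shrinking $\ga$) gives the quantitative bound in the statement, and transferring the $L^p$-path bound for $a$ through the heat characterization—applying $e^{\tau\De}$ to the mild identity and re-running the bilinear estimate—controls $\sup_t\|a(t)\|_{\Bp}$. Uniqueness in the small-norm class is the usual computation $a-a'=-B(a-a',a)-B(a',a-a')$. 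Smoothness for $t>0$ follows by bootstrapping: $t^{1/2}\|a(t)\|_{L^\I}\in L^\I(0,S)$ makes $a$ bounded on each $(\de,S)$, and iterating the Duhamel formula started at time $\de$ upgrades $a$ to $C^\I$ in space--time on $(\de,S)$ by parabolic regularity; let $\de\to0$. Finally $a\in C_{w^*}([0,S);\Bp)$ because $\dot\Delta_j e^{t\De}u_{0,a}\to\dot\Delta_j u_{0,a}$ in $L^p$ for each $j$, while $\|B(a,a)(t)\|_{\Bp}\to0$ as $t\to0$ since the $X_t$-norm of $a$ on $(0,t)$ tends to $0$; hence $a(t)\to u_{0,a}$ in the weak-$*$ topology.

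The main obstacle is the bilinear estimate together with its interface with the $\ell^\I$-summability Besov framework. Since $\Bp$ is non-separable and $e^{t\De}u_{0,a}$ does \emph{not} converge to $u_{0,a}$ strongly, the fixed point cannot be run directly in $C([0,S);\Bp)$; one must work in the auxiliary path space $X_S$ and pass back to $\Bp$ through the heat characterization, and only weak-$*$ continuity at $t=0$ is available. The exponent bookkeeping is also genuinely borderline—convergence of the two time integrals above is exactly what pins down the restriction $p\in(3,\I)$.
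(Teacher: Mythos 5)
The paper does not actually prove this proposition---it is imported verbatim from Barker--Prange \cite[Proposition 23]{BaPr2020}---so the only meaningful comparison is with the standard construction, which is exactly what you carry out: a Kato-type Picard iteration in the scaling-critical path space $X_S$, with the two Beta-integral bilinear estimates whose convergence pins down $p\in(3,\I)$. That core is correct, and it is (in essence) how the cited result is proved. Two points in your write-up need repair, though. First, the displayed inequality cannot be obtained by ``tracking constants and shrinking $\ga$'': both sides scale linearly in the data, so shrinking $\ga$ never removes a multiplicative constant, and moreover $\sup_{t}\|a(\cdot,t)\|_{\Bp}$ is \emph{not} controlled by the time-truncated caloric quantity $\sup_{0<t<S}t^{\frac12(1-\frac3p)}\|e^{t\De}u_{0,a}\|_{L^p}$ alone---already the linear solution with purely low-frequency data violates such a bound, since the truncated supremum misses the low frequencies that determine $\|u_{0,a}\|_{\Bp}$. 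The unused constant $K''(p)$ in the statement is the telltale sign of a transcription slip; what your argument actually delivers (a bound with a $p$-dependent constant, and with $\|u_{0,a}\|_{\Bp}$ entering the control of the $\Bp$ term) is the correct version. Second, your justification of weak-$*$ convergence $a(t)\to u_{0,a}$ leans on ``the $X_t$-norm of $a$ on $(0,t)$ tends to $0$,'' which is false for general $u_{0,a}\in\Bp$: the quantity $\sup_{0<s<t}s^{\frac12(1-\frac3p)}\|e^{s\De}u_{0,a}\|_{L^p}$ vanishes as $t\to0$ only for data in the closure of Schwartz functions (the ``little'' Besov space), not for arbitrary small data. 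The fix is elementary: pairing $B(a,a)(t)$ against a fixed test function costs only $\int_0^t s^{-(1-\frac3p)}\,ds\sim t^{\frac3p}\to0$, so $B(a,a)(t)\to0$ in $\mathcal{D}'$, and combining distributional convergence with the uniform-in-time $\Bp$ bound upgrades this to weak-$*$ convergence. With those two adjustments your proof is complete and matches the intended (correctly stated) result.
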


The next theorem is exactly \cite[Theorem 6]{BaPr2020}, which is a modification of \cite[Theorem 4]{BaPr2020} to accommodate Besov space data.  
\begin{theorem}\label{prop:BaPr2020no5} Let $t_0\in[-1,0]$ and $\eta \in (0,1)$ be fixed. For all $\ga\in(0,3)$ there exists $C_*(\ga)>0$, for all $E>0$, there exists $\varepsilon_*(\delta,\eta,E)>0$, for all $a$ such that
\EQ{\label{eq:BP5hypothesis0}
\sup_{s\in(-1,0)}|s-t_0|^{\frac 12} \|a(\cdot,s)\|_{L^\I(B_1(0))}<\I,}
and all local suitable solutions $v$ to \eqref{eq:pns} in $Q_1(0,0)$ such that for all $s\in(-1,0)$ 
\EQ{\label{eq:BP5hypothesis}\int_{B_1(0)} |v(x,s)|^2\,dx + \int_{-1}^s\int_{B_1(0)} |\nb v|^2 \, dx \, ds' \le E(s-t_0)^\eta_+, \\
\int_{-1}^s \int_{B_1(0)} |q|^{\frac 32} \, dx\,ds' \le E(s-t_0)^{\frac 34 \eta}_+,}
where $(\cdot)_+ := \max(\cdot,0)$, the conditions
\[\sup_{s\in(-1,0)} |s-t_0|^{\frac 12} \|a(\cdot,s)\|_{L^\I(B_1(0))} \le \varepsilon_*\]
and 
\[\int_{Q_1(0,0)} |v|^3 + |q|^{\frac 32} \,dx\,ds \le \varepsilon_*\]
imply that for all $(\bar x,t)$ in $\bar Q_{1/2}(0,0),$ for all $r\in (0,\frac 14]$,
\[\frac 1 {r^3} \int_{Q_r(\bar x,t)} |v|^3 \,dx\,ds \le C_* \varepsilon_*^{\frac 23}r^{-\de}.\]
\end{theorem}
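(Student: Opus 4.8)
The approach I would take is a quantitative Caffarelli--Kohn--Nirenberg $\varepsilon$-regularity iteration adapted to the perturbed system \eqref{eq:pns} and to the degenerate a priori bounds \eqref{eq:BP5hypothesis}; this is in the spirit of \cite{JS,BaPr2020}. Fix $(\bar x,t)\in \bar Q_{1/2}(0,0)$ and write $Q_\rho = Q_\rho(\bar x,t)$. Introduce the scale-invariant quantities
\[
A(\rho) = \sup_{t-\rho^2<s<t}\frac 1\rho \int_{B_\rho}|v(x,s)|^2\,dx, \qquad E(\rho) = \frac 1\rho\int_{Q_\rho}|\nb v|^2\,dx\,ds,
\]
\[
C(\rho) = \frac 1{\rho^2}\int_{Q_\rho}|v|^3\,dx\,ds, \qquad D(\rho) = \frac 1{\rho^2}\int_{Q_\rho}|q|^{\frac 32}\,dx\,ds.
\]
The conclusion is a bound on $C(r)$, so it suffices to prove $C(\rho)+D(\rho)\lesssim_\de \varepsilon_*^{2/3}\rho^{-\de}$ for $\rho\in(0,\tfrac14]$, and I would get this by induction over dyadic scales $\rho=\theta^k$, with $\theta$ a small ratio fixed later.

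The first step is a localized energy estimate from the local energy inequality for \eqref{eq:pns} (the analogue of \eqref{ineq:CKN-LEI} with the extra drift terms) tested against a cutoff supported in $Q_\rho$ and equal to $1$ on $Q_{\theta\rho}$. Relative to the unperturbed case, the new contributions are drift terms of schematic form $\iint (a\otimes v + v\otimes a):\nb(\phi^2 v)$ together with an associated pressure term. The point is that $\sup_s |s-t_0|^{1/2}\|a(\cdot,s)\|_{L^\infty(B_1)}$ is \emph{scale invariant}; once it is $\le\varepsilon_*$, these terms are bounded by $\varepsilon_*$ times a combination of $A(\rho)$, $E(\rho)$ and lower-order terms, and are therefore absorbed. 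Here the time weight $|s-t_0|^{-1/2}$ must be integrated against the degenerate bound $(s-t_0)_+^\eta$, which is why $\eta$ is taken close to $1$. This produces a Caccioppoli-type inequality $A(\theta\rho)+E(\theta\rho)\lesssim \theta^{-c}\big(C(\rho)^{2/3}+C(\rho)+D(\rho)^{2/3}\big)$, up to absorbed drift terms.

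The second step handles the pressure: from $-\Delta q = \partial_i\partial_j(v_iv_j+a_iv_j+v_ia_j)$ on $B_\rho$, split $q=q_{\near}+q_{\far}$, with $q_{\near}$ the Newtonian potential of the localized right-hand side, controlled in $L^{3/2}(B_\rho)$ by Calderón--Zygmund in terms of $C(\rho)^{2/3}$ plus a drift piece that is small by the criticality of $a$, and $q_{\far}$ harmonic in $B_{\rho/2}$, hence satisfying the interior bound $\|q_{\far}\|_{L^{3/2}(B_{\theta\rho})}\lesssim\theta^{3}\|q_{\far}\|_{L^{3/2}(B_{\rho/2})}$. This gives $D(\theta\rho)\lesssim\theta\,D(\rho)+\theta^{-c}\big(C(\rho)+\varepsilon_*(\cdots)\big)$. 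Combining with the standard interpolation $C(\rho)\lesssim A(\rho)^{3/4}E(\rho)^{3/4}+A(\rho)^{3/2}$ (Gagliardo--Nirenberg in space, integrated in time) and setting $\Phi(\rho):=C(\rho)+D(\rho)$, one reaches a closed recursion $\Phi(\theta\rho)\le C_0\big(\theta^{c_0}+\theta^{-c_1}\Phi(\rho)^{1/2}\big)\Phi(\rho)+C_0\theta^{-c_1}\varepsilon_*^{2/3}$. Choosing $\theta$ small and then $\varepsilon_*$ small makes the multiplicative factor $\le\tfrac12$ while $\Phi(1)\lesssim\varepsilon_*$ by hypothesis; iterating from scale $1$ down to scale $\rho$ yields $\Phi(\rho)\lesssim\varepsilon_*^{2/3}(1+\rho^{-\de})$, and the CKN criterion then converts this smallness on small cylinders into the asserted Morrey bound on $C(r)$ for $r\le\tfrac14$.

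I expect the main obstacle to be propagating the time-degeneracy weight $(s-t_0)_+^\eta$ consistently through both the energy and pressure estimates: it is simultaneously what forces $\eta$ near $1$ (so that $|s-t_0|^{-1/2}(s-t_0)_+^\eta$ is integrable and the drift terms close) and what produces the loss $\rho^{-\de}$ rather than a clean regularity statement, and matching the exponents so that the recursion for $\Phi$ genuinely contracts is delicate. A secondary point is that the drift terms must be \emph{absorbed}, not merely bounded; this works precisely because $a$ enters only through the critical scale-invariant quantity $\sup_s|s-t_0|^{1/2}\|a\|_{L^\infty(B_1)}$, which can be taken arbitrarily small, so no subcritical control of $a$ is needed.
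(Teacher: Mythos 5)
First, a point of comparison: the paper does not prove this statement at all. It is imported verbatim as \cite[Theorem 6]{BaPr2020} (a Besov-space modification of \cite[Theorem 4]{BaPr2020}), so there is no internal proof to measure your sketch against; what you have written is an attempted reconstruction of the Barker--Prange argument. Your overall strategy --- a quantitative CKN-type iteration on scale-invariant quantities, with the drift terms absorbed using only the critical scale-invariant smallness of $\sup_s|s-t_0|^{1/2}\|a(\cdot,s)\|_{L^\infty(B_1)}$, and a near/far pressure decomposition --- is indeed the right family of techniques and matches the spirit of \cite{JS,BaPr2020}.

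There is, however, a concrete quantitative gap. You reduce the theorem to showing $C(\rho)=\rho^{-2}\int_{Q_\rho}|v|^3\lesssim \varepsilon_*^{2/3}\rho^{-\delta}$, which gives $\int_{Q_\rho}|v|^3\lesssim \varepsilon_*^{2/3}\rho^{2-\delta}$. The theorem asserts $\int_{Q_r}|v|^3\le C_*\varepsilon_*^{2/3}r^{3-\delta}$ --- a full extra power of $r$ beyond the critical scaling $r^2$. That extra power is not cosmetic: it is precisely the subcritical Morrey decay that the subsequent regularity criterion (Theorem \ref{thrm:BaPr2020no6}, which requires $r^{\delta-5}\int_{Q_r}|u|^3$ bounded) feeds on, and it is the whole content of the degenerate hypotheses $\int_{B_1}|v(\cdot,s)|^2+\int\int|\nb v|^2\le E(s-t_0)_+^\eta$: the solution vanishes at $t=t_0$ and the iteration must be run on quantities weighted by powers of $(s-t_0)_+$, distinguishing the regimes $t-t_0\lesssim r^2$ (where the $\eta$-vanishing supplies the extra smallness) and $t-t_0\gg r^2$ (where one iterates from the $L^3$ smallness). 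Your closed recursion $\Phi(\theta\rho)\le C_0(\theta^{c_0}+\theta^{-c_1}\Phi(\rho)^{1/2})\Phi(\rho)+C_0\theta^{-c_1}\varepsilon_*^{2/3}$ is uniform in $t_0$ and never actually uses $(s-t_0)_+^\eta$ except to argue integrability of $|s-t_0|^{-1/2}$ against it, so as written it can only deliver the critical bound $\rho^{2-\delta}$, not $\rho^{3-\delta}$. The closing appeal to ``the CKN criterion'' to convert smallness into the asserted Morrey bound also inverts the logical order of \cite{BaPr2020}: there, this Morrey decay is the \emph{input} to the regularity criterion, not a consequence of it. To repair the sketch you would need to redo the bookkeeping with the time weight built into $A$, $E$, $C$, $D$ (or iterate separately in the two regimes above), which is exactly the delicate point you flagged but did not resolve.
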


The last result is a local regularity criteria for the perturbed Navier-Stokes equations. The form we include is a  revision of \cite[Theorem 5]{BaPr2020} which is justified in \cite[Appendix C]{BaPr2020}.

\begin{theorem}\label{thrm:BaPr2020no6}
Let $a$ be a divergence free vector field for which  $\| a \|_{\Kp} < \I$ for some  fixed $p\in (3,\I)$. Let $u\in C_w([0,1];L^2(B_2(0)))$, $\nb u\in L^2(B_2(0) \times (0,1))$ and $p\in L^1((0,1);L^1(B_2(0)))$ satisfy \eqref{eq:pns} distributionally and $u(\cdot,0)=0\in B_2(0)$. Furthermore, assume there exists $\de\in (0,3/2)$ and $S^*\in (0,1/4)$ so that
\[
\sup_{0<r\leq \frac 1 4; (\bar x,t)\in \bar Q_{\frac 1 2}(0,S_*)} r^{\de-5} \int_{Q_r(\bar x,t)} |u|^3\,dx\,ds <\I.
\]
Then, there is a universal constant $\e_{**}\in (0,\I)$ so that if 
\EQ{\label{cond.KatoP}
 \| a(\cdot,s)\|_{\Kp} (s)\leq \e_{**},
}
then $u\in C^{0,\si}_{\text{par}} (B_{1/3}(0)\times (0,S^*))$ for some $\si\in (0,1/2)$.
\end{theorem}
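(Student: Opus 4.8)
The plan is to treat \cref{thrm:BaPr2020no6} as a local $\e$-regularity statement for the perturbed system \eqref{eq:pns} and to upgrade the Morrey decay implicit in its hypotheses to parabolic Hölder regularity, following the argument of \cite[Appendix C]{BaPr2020} with the critical drift $a$ threaded through each estimate. The first observation is that the standing assumption $\sup_{0<r\le 1/4}r^{\delta-5}\int_{Q_r(\bar x,t)}|u|^3\,dx\,ds<\I$ with $\delta\in(0,3/2)$ gives $\int_{Q_r}|u|^3\lesssim r^{5-\delta}$, so the scale-invariant Caffarelli--Kohn--Nirenberg quantity satisfies
\EQ{
\frac 1 {r^2}\int_{Q_r(\bar x,t)}|u|^3\,dx\,ds \lesssim r^{3-\delta}\to 0 \quad\text{as } r\to 0,
}
uniformly for $(\bar x,t)\in\bar Q_{1/2}(0,S_*)$. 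Thus the scaled $L^3$ norm of $u$ is automatically sub-threshold at all sufficiently fine scales; the complementary smallness of the pressure is produced from $p\in L^1_tL^1_x$ by a local pressure splitting, writing $p$ as a Calderón--Zygmund contribution from $(u+a)\otimes(u+a)$ localized near $\bar x$ plus a smoother far-field piece, and using the Morrey decay of $u$ together with $\|a\|_{\Kp}<\I$ to propagate the decay to $\int_{Q_r}|q|^{3/2}$.

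Next I would rescale a small cylinder $Q_\rho(\bar x,t)$ to $Q_1$ and verify the hypotheses of \cref{prop:BaPr2020no5}: the energy and pressure growth bounds \eqref{eq:BP5hypothesis} follow from the local energy inequality for \eqref{eq:pns} (available since $u\in C_w L^2\cap L^2\dot H^1$ locally) together with the Morrey decay just described, while the smallness $\int_{Q_1}|v|^3+|q|^{3/2}\le\e_*$ holds once $\rho$ is taken small. The universal threshold $\e_{**}$ is then chosen small relative to $\e_*$ so that $\|a(\cdot,s)\|_{\Kp}\le\e_{**}$ forces both the integrability \eqref{eq:BP5hypothesis0} and, after rescaling, the drift smallness $\sup_s|s-t_0|^{1/2}\|a(\cdot,s)\|_{L^\I(B_1)}\le\e_*$ demanded there; this step uses that $\|\cdot\|_{\Kp}$ is scale invariant, so the hypothesis \eqref{cond.KatoP} survives the zoom. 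Applying \cref{prop:BaPr2020no5} and unscaling yields a quantitative decay of the form $r^{-3}\int_{Q_r(\bar x,t)}|u|^3\lesssim\e_*^{2/3}r^{-\delta'}$, uniformly over centers $(\bar x,t)$ in a slightly shrunken cylinder, i.e.\ a Campanato-type estimate $\int_{Q_r}|u|^3\lesssim r^{3-\delta'}$ with positive exponent.

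Finally I would convert this Morrey decay into Hölder continuity, which is the substance of \cite[Appendix C]{BaPr2020}. A Caccioppoli inequality for \eqref{eq:pns} transfers the decay of $\int_{Q_r}|u|^3$ into a matching decay of $\int_{Q_r}|\nb u|^2$ (the drift term $\mathbb P\nb\cdot(a\otimes u+u\otimes a)$ being absorbed because $\|a\|_{\Kp}\le\e_{**}$ is small), which upgrades to a decay of the parabolic mean oscillation of $u$ on shrinking cylinders; the parabolic Campanato--Morrey embedding then gives $u\in C^{0,\si}_{\text{par}}(B_{1/3}(0)\times(0,S^*))$ with $\si$ determined by $\delta'$ and shrunk so that $\si\in(0,1/2)$. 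The main obstacle is exactly the critical scaling of the drift: since $\|a\|_{\Kp}$ does not decay under parabolic rescaling, one cannot render $a$ negligible by zooming in, and must instead absorb its contributions to the pressure decomposition, the local energy inequality, and the Caccioppoli estimate uniformly across all dyadic scales at the single fixed threshold $\e_{**}$. Ensuring that these drift terms remain sub-threshold at every scale---so that the iteration generating the Campanato decay closes uniformly---is the delicate point, but this is precisely the mechanism already encoded in \cref{prop:BaPr2020no5}, leaving only the routine but careful bookkeeping of \cite[Appendix C]{BaPr2020} adapted to the small critical drift.
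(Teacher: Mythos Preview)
The paper does not prove \cref{thrm:BaPr2020no6}; it is quoted verbatim as a result from \cite{BaPr2020} (specifically a revision of their Theorem~5 justified in their Appendix~C) and is invoked as a black box in the proof of \cref{thrm:Bpdecay}. There is therefore no in-paper proof to compare your proposal against. In the paper's logic, \cref{prop:BaPr2020no5} is applied \emph{first} to produce the Morrey decay, and \cref{thrm:BaPr2020no6} is then applied \emph{to} that decay to upgrade it to H\"older regularity; the two are used sequentially, not one inside the other.

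That said, your sketch has two substantive gaps worth flagging. First, invoking \cref{prop:BaPr2020no5} inside the proof of \cref{thrm:BaPr2020no6} is redundant: the Morrey hypothesis $r^{\delta-5}\int_{Q_r}|u|^3<\infty$ is precisely the \emph{output} of \cref{prop:BaPr2020no5}, so re-deriving it (and obtaining the weaker exponent $3-\delta'$ in place of the assumed $5-\delta$) adds nothing. The actual work in \cite[Appendix~C]{BaPr2020} is to pass from that Morrey bound directly to a Campanato-type oscillation estimate via an iterated comparison with solutions of the linear (Stokes-type) problem on dyadic cylinders, absorbing the drift at each step. Second, your verification of the $L^\infty$ drift smallness $\sup_s |s-t_0|^{1/2}\|a(\cdot,s)\|_{L^\infty(B_1)}\le\e_*$ from \eqref{cond.KatoP} does not follow: the hypothesis only gives $\|a\|_{\mathcal K_p}$ small for a fixed finite $p>3$, which controls $s^{\frac 1 2(1-\frac 3 p)}\|a(s)\|_{L^p}$ but not $s^{1/2}\|a(s)\|_{L^\infty}$. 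Scale invariance of $\mathcal K_p$ does not bridge this gap. The actual argument in \cite[Appendix~C]{BaPr2020} works directly with the $\mathcal K_p$ drift (not through the $L^\infty$ version of \cref{prop:BaPr2020no5}), and the perturbed linear estimates there are formulated to accommodate that weaker integrability.
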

 
In \cite{BaPr2020}, it is shown that a perturbed solution satisfies Proposition \ref{prop:BaPr2020no4}. Then, Theorem \ref{prop:BaPr2020no5} provides H\"older regularity.  

\begin{proof}[Proof of Theorem \ref{thrm:Bpdecay}]
We decompose our data using \cref{lemma:BpDataDecomp} to obtain $u_0=a_0+b_0$ with
\[ \|a_0\|_{\Bp(\R^3)}<\varepsilon \qquad |b_0(x)|\le \frac{K_0(\varepsilon, u_0,\la)}{|x|},\]
Next, by taking $\varepsilon$ small enough to apply \cref{prop:BaPr2020no4}, we obtain a solution $a$ to \eqref{eq:NS} which remains small in $\Bp$, $\Kp$, and  $\cK_\I$, and is unique in the class of solutions with sufficiently small $\Kp$ norm.

Examining $b_0$, we can choose $R_\varepsilon>0$ such that $\|b_0\|_{L^\I(\R^3\setminus B_{R_\varepsilon}(0))}<\varepsilon$. Choose the smallest $k$ such that $A_k=\{x: \la^k\le|x|<\la^{k+1}\}\subset \R^3\setminus B_{R_\varepsilon}(0)$ and for every $x\in A_k$, 
$B_2(x)\subset \R^3\setminus B_{R_\varepsilon}(0)$. Next, fix $x_0\in A_k$  and we localize $b_0$ to $B_2(x_0)$ as follows:
Let $\phi\in C^\I_c(\R^3)$ satisfy $\phi \equiv 1$ on $B_1(x_0)$, $\supp \phi \subset B_2(x_0)$, and $\|\phi\|_{L^\I}+\|\nb \phi \|_{L^\I} \lesssim 1$. 

We will eventually apply a local smoothing result to a solution to the perturbed NS starting from $b_0$, where we emphasize that $b_0\in L^2_\uloc$ but that this may not be the case for $u_0$. Indeed the basic idea of our proof is that local smoothing for such a solution  will imply that the $|x|^{-1}$ asymptotics of $b_0$ extend to the solution.   
Because there is a background perturbation which is only in $L^\I_t \Bp   \cap \mathcal K_p$, we need to use the treatment of the problem for locally small $\Bp$  data in \cite[Appendix B]{BaPr2020} as opposed to what appears in the main body of \cite{BaPr2020}. 

To make this idea rigorous we first split the data.
Using the Bogovskii map, we know that, for any $1<q<\I$, there is a $\td b_{0,1}\in W_0^{1,q}(B_2(x_0)\setminus B_1(x_0))$ that satisfies
\[\div \td b_{0,1} = b_0 \cdot \nb \phi,\]
and 
\[\|\td b_{0,1}\|_{W^{1,q}}\le C(q)\|b_0 \cdot \nb \phi\|_{L^q}\le C(q)\|b_0
\|_{L^\I}(A_k)< C(q)\varepsilon,\]
where $C(q)$ also depends on $\phi$. Now write $b_0 = b_{0,1}+b_{0,2}$ where $b_{0,1} = \phi b_0-\td b_{0,1}$ and $b_{0,2}=b_0-b_{0,1}$ are divergence free. Additionally, $\supp b_{0,1}\subset B_2(x_0)$ and by \cite[p. 1517]{BaPr2020}, $\|b_{0,2}\|_{L^2_\uloc}\lesssim \| b_0\|_{L^2_\uloc}$. 

We next show $\|b_{0,1}\|_{L^\I}$ is controlled by $\varepsilon$.
By Sobolev embedding,
\[\|\td b_{0,1}\|_{C^{0,\frac 12}(\R^3)} \le C\|\td b_{0,1}\|_{W^{1,6}(\R^3)}\le C\|b_0\|_{L^\I(A_k)} \lesssim \varepsilon.\]
Since $\td b_{0,1} = 0$ on the boundary of $B_2(x_0)$, membership in the H\"older class implies 
\EQ{\|\td b_{0,1}\|_{L^\I (B_2(x_0))} &=\|\td b_{0,1}(x)-\td b_{0,1}(y)\|_{L^\I_x (B_2(x_0))}\\&\lesssim \|\td b_{0,1}\|_{C^{0,\frac 12}(B_2(x_0))} \sup_{x\in B_2(x_0)}|x-y|^{\frac 12} \lesssim \e,}
for $y\in \pd B_2(x_0)$, and, therefore, $\| b_{0,1}\|_{L^\I}\lesssim \e$. 

Next, because $\|b_{0,1}\|_{L^p(\R^3)}$ and $\|a\|_{K_\I}$ can be made arbitrarily small through our selection of $\varepsilon$ and $R_\varepsilon$, we can apply \cref{lemma:Linfty} to solve \eqref{eq:pns} obtaining a solution $b_1$ that satisfies
\[
\partial_t b_1 -\Delta b_1 +b_1\cdot \nb b_1+a\cdot\nb b_1+b_1\cdot\nabla a +\nb \pi_1 = 0 ; \qquad \nb \cdot b_1=0, 
\]
\[\sup_{0<t<1}\|b_1\|_{L^\I}(t) \le 2\|b_{0,1}\|_{L^\I},\]
and
\[\sup_{0<t<1}\|b_1\|_{L^p}(t) \le 2\|b_{0,1}\|_{L^p}.\]
Therefore  $\|b_1\chi_{[0,1]}\|_{\mathcal K_p}+\|b_1\chi_{[0,1]}\|_{\mathcal K_\I}\lesssim \|b_{0,1}\|_{L^p}$.  


By taking $b_{0,1}$ sufficiently small,  the preceding observations allow us to conclude that
\EQ{\label{condOnv}
\|(a+b_1)\chi_{[0,1]}\|_{\mathcal K_p}+\|(a+b_1)\chi_{[0,1]}\|_{\mathcal K_\I}<\e.}
Finally, observe that $v=a+b_1$ solves \eqref{eq:NS}.

We now consider 
$b_2$ which solves 
\[\pd_t b_2 - \De b_2 +b_2\cdot \nb b_2 +v\cdot \nb b_2 +b_2\cdot \nb v+\nb \pi_2 =0,\]
where $v=a+b_1$ is a solution to \eqref{eq:NS}.
We are now in \textit{exactly} the same position as in \cite[Proof of Theorem 1]{BaPr2020}, albeit with the $\Bp$ modifications in \cite[Appendix C]{BaPr2020} and the fact that $v$ is not a local energy solution to \eqref{eq:NS}. 
To make sure our reasoning aligns with \cite{BaPr2020}, we need to check that $b_2$ is a local energy solution, regardless of what is true of $v$.   The majority of the remainder of this proof is dedicated to this.

That $(b_2,\pi_2)$ solves the perturbed Navier-Stokes equations around $v=a+b_1$ as distributions follows from the fact the fact that $u$, $a$ and $b_1$ solve their respective problems distributionally. The pressure expansion is similarly justified.

Regarding the second item in the definition of local energy solutions, namely membership in $L^{\I}(0,T; L^2_\loc)\cap L^2 (0,T; \dot H^1_\loc)$, we note that $b_2 = u - (a+b_1)$ and $u$ is a $\Bp$-weak solution. We may also take $a$ to be a $\Bp$-weak solution by weak-strong uniqueness.  
Hence, 
\[
b_2 = b_1 + (u-P_k(u_0)) + (P_k(u_0)-P_k(a_0)) + (P_k(a_0)-a).
\]
The energy estimates associated with $\Bp$-weak solutions imply the $u-P_k(u_0)$ and $P_k(a_0)-a$ satisfy the needed inclusions in the first item of the definition of local energy solutions. Parabolic potential estimates imply $t^{1/2} \nb b_1 \in L^\I_{x,t}$ and so $b_1$ also satisfies these inclusions. For $P_k(u_0)-P_k(a_0)$ we consider the expansion
\[
P_k(u_0)-P_{k-1}(u_0) + P_{k-1}(u_0)-\cdots -e^{t\Delta}u_0 + e^{t\Delta}a_0 - e^{t\Delta}a_0+\cdots +P_{k-1}(a_0)-P_k(a_0). 
\]
Each difference of adjacent Picard iterates enjoys the same energy bound as the difference $u-P_k(u_0)$, which is stated in \cite[(2.37)]{AB}. In particular these differences satisfy the desired inclusion.  This leaves 
\[
e^{t\Delta}(u_0-a_0).
\]
Note that $a_0 - u_0 \in L^2_\uloc$. By properties of the heat equation in $L^2_\uloc$ (see, e.g.,~\cite{MaTe}), we  conclude that $e^{t\Delta}(u_0-a_0)$ satisfies the correct inclusion. Putting all this together we have that 
\[
b_2\in L^{\I}(0,T; L^2_\loc)\cap L^2 (0,T; \dot H^1_\loc),
\]
for $T$ the  existence time of $b_1$. This confirms the second item in the definition of local energy solutions holds. 

The fourth item of the definition of local energy solutions follows from essentially the same remarks. In particular, for a compact set $K$, each difference of Picard iterates goes to zero in $L^2(K)$, and the same is true for $u-P_k(u_0)$ and $P_k(a_0)-a$. Properties of the heat equation imply $e^{t\Delta}(u_0-a_0)\to u_0-a_0$ in $L^2(K)$. Finally, since $b_{0,1}\in L^4$, $b_1 \to b_{0,1}$ in $L^4(K)$ by Lemma \ref{lemma:Linfty}. Taken together, we see that $b_2\to b_{0,2}$ in $L^2(K)$ for every compact set $K$. This also implies the sixth item of  the definition of local energy solutions  holds at $t=0$. For $t>0$, the sixth item follows from the second item of Definition \ref{defn:BesovSoln} along with continuity properties of solutions to the heat equation, as well as the continuity  properties of $a$ in Lemma \ref{lemma:Linfty}.

That the pressure belongs to $L^{3/2}_\loc (\R^4_+)$ is inherited from the fact that this holds for the pressures of $u$ and $a$ as they are $\Bp$-weak solutions, as well as the fact it holds for the pressure of $b_1$---indeed $b_{0,1}\in L^2_\uloc$ so it satisfies all properties of (perturbed) local energy solutions by weak strong uniqueness. 

 Regarding a local energy inequality for $b_2$, observe that $b_2$ solves the perturbed Navier-Stokes equations around $v= a+b_1$ where $v$ solves the Navier-Stokes equations. The local energy inequality for $b_2$ is formally obtained by testing the $b_2$-equation against $b_2\phi$ where $\phi$ is a non-negative test function and integrating by parts. As $b_2$ is not known to be smooth, this calculation cannot be carried out directly but must be reduced to statements about $u$ and $v$. This is sensitive because we do not have energy estimates down to $t=0$ for $u$. Fortunately, 
 the definition of the local energy inequality for perturbed NS in \cite{BaPr2020}, see \cite[Display (25)]{BaPr2020}, uses test functions which are supported in $C_c^\I(Q_1)$ where $Q_1$ is an open space-time cylinder. In our case, the functions in $C_c^\I$   are supported away from $t=0$. This means that all assertions about $u$ and $v$ with respect to the local energy inequality are the same  in our context compared to \cite{BaPr2020}---the fact that $u_0\notin L^2_\loc$ plays no role here. In particular, we can conclude that $b_2$ satisfies the needed local energy inequality in the same way as in \cite{BaPr2020}. As the calculation is rather lengthy and is omitted from \cite{BaPr2020}, we include it for convenience.

Let $\phi \in C^\I_c(\R^4_+)$. Observe that if $v= a+b_1$, then $v$ solves the Navier-Stokes equations distributionally and, in view of the regularity of $a$ and $b_1$ is smooth for $t>0$.

Observe that $u$ satisfies the local energy inequality by assumption while $v$ satisfies the local energy equality due to its regularity. Noting that $b_2=u-v$, we obtain
\EQ{\label{lei1}
2\iint |\nb b_2|^2\phi \,dx\,dt &= 2\iint ( |\nb u|^2+|\nb v|^2 - 2 \nb u:\nb v  )\phi\,dx\,dt
\\&\leq \iint |u|^2 (\partial_t \phi +\Delta \phi) \,dx\,dt+ \iint (|u|^2+2p_u) (u\cdot\nb \phi)\,dx\,dt
\\&+\iint |v|^2 (\partial_t \phi +\Delta \phi)\,dx\,dt + \iint (|v|^2+2p_v) (v\cdot\nb \phi)\,dx\,dt
\\&-4 \iint \nb u:\nb v  \,dx\,dt.
}
We have 
\[
\iint |u|^2 ((\partial_t \phi +\Delta \phi) +|v|^2 (\partial_t \phi +\Delta \phi))\,dx\,dt = \iint ( |b_2|^2 (\partial_t \phi +\Delta \phi) + 2 u\cdot v (\partial_t \phi +\Delta \phi))\,dx\,dt
\]
Hence,
\EQ{\label{lei2}
&2\iint |\nb b_2|^2\phi \,dx\,dt 
\\&\leq \iint |b_2|^2 (\partial_t \phi +\Delta \phi) \,dx\,dt + 2\iint u\cdot v (\partial_t \phi +\Delta \phi)\,dx\,dt-4 \iint \nb u:\nb v \phi \,dx\,dt
\\&+ \iint (|u|^2+2p_u) (u\cdot\nb \phi)\,dx\,dt + \iint (|v|^2+2p_v) (v\cdot\nb \phi)\,dx\,dt.
}
We now consider the equations one gets from testing $\partial_t u $ against $2v\phi$ and $\partial_t v$ against $2u\phi$ and summing. In the first case we get after integrating by parts, which is justified because $2v\phi$ is a test function,  
\EQ{
0=&\iint (\partial_t u-\Delta u +u\cdot \nb u +\nb p_u)(2\phi v)\,dx\,dt 
\\&= -\iint ( 2u\partial_t v \phi + 2 u\cdot v \partial_t \phi) \,dx\,dt
\\&+ \iint 2 \nb u : \nb v \phi \,dx\,dt+2 \iint \partial_i u_j \partial_i \phi v_j \,dx\,dt
\\&+ \iint (u\cdot \nb u +\nb p_u)(2\phi v)\,dx\,dt,
}
where we are summing over the indices $i$ and $j$ where they appear.
Adding the equation we get from testing $\partial_t v$ against $2u\phi$ to this we obtain
\EQ{
0&=-\iint 2 u\cdot v \partial_t \phi \,dx\,dt 
\\&+ \iint 4 \nb u : \nb v \phi\,dx\,dt +2 \iint \partial_i u_j \partial_i \phi v_j \,dx\,dt +2 \iint (\nb v \cdot  \nb \phi) u\,dx\,dt
\\&+ \iint (u\cdot \nb u +\nb p_u)(2\phi v)\,dx\,dt +  \iint (v\cdot \nb v +\nb p_v)(2\phi u)\,dx\,dt.
}
Observe that  
\[
2 \iint \partial_i u_j \partial_i \phi v_j \,dx\,dt +2 \iint\partial_i v_j \partial_i \phi u_j \,dx\,dt= -2 \int u\cdot v \Delta \phi \,dx\,dt.
\]
Adding the above to \eqref{lei2} gives
\EQ{\label{lei3}
&2\iint |\nb b_2|^2\phi \,dx\,dt 
\\&\leq \iint |b_2|^2 (\partial_t \phi +\Delta \phi) \,dx\,dt  
\\&+ \iint (u\cdot \nb u +\nb p_u)(2\phi v)\,dx\,dt +  \iint (v\cdot \nb v +\nb p_v)(2\phi u)\,dx\,dt
\\&+ \iint (|u|^2+2p_u) (u\cdot\nb \phi)\,dx\,dt + \iint (|v|^2+2p_v) (v\cdot\nb \phi)\,dx\,dt.
}
A direct calculation and adding and subtracting $\iint b_2\cdot \nb v b_2\phi\,dx\,dt$  reveals that
\EQ{
\iint |u|^2 u\cdot \nb \phi\,dx\,ds &= \iint \big( |b_2|^2 b_2\cdot\nb \phi + |b_2|^2 v\cdot\nb \phi - b_2\cdot \nb v b_2\phi\big)\,dx\,dt
\\&+\iint \big( b_2\cdot \nb v b_2\phi\,dx\,dt -|v|^2 u\cdot\nb \phi + 2( u\cdot v )(u\cdot \nb \phi ) \big)\,dx\,ds.
}
The leading three terms on the right-hand side comprise the non-linear part of the local energy inequality for $b_2$, omitting the pressure.
Comparing the above to the non-pressure and non-linear terms on the right-hand side of \eqref{lei3},  
we need to have
\[
\iint\big( b_2\cdot \nb v b_2\phi -|v|^2 u\cdot\nb \phi + 2 (u\cdot v) u\cdot \nb \phi +\underbrace{v\cdot\nb v 2 \phi u  + u\cdot\nb u 2\phi v + |v|^2 v\cdot\nb \phi }_{\text{ from \eqref{lei3}}}\big)\,dx\,ds= 0.
\]
This is clearly the case once we expand $b_2\cdot \nb v b_2\phi$ in terms of $u$ and $v$. A similar calculation applies to the pressure.

At this point, our proof aligns \textit{exactly} with the work in \cite{BaPr2020}. In particular, 
the quantitative estimates \eqref{eq:BP5hypothesis} are deduced in the same way and \eqref{eq:BP5hypothesis0} are satisfied due to \eqref{condOnv}.
 Therefore, there exists a short time, $S_*$  so that 
 \[\int_0^{S_*}\int_{B_1(x_0)} |b_2|^3\,dx\,ds + \int_0^{S_*}\int_{B_1(x_0)} |\pi_2|^{\frac 32} \,dx\,ds\le\varepsilon_*.\]
We then extend $b_2$ by zero to a local suitable energy solution on $B_1(x_0)\times(-1+S_*,S_*)$ and we use \cref{prop:BaPr2020no5} to conclude that
 \[\frac 1{r^3} \int_{Q_r(\bar x,t)}|b_2|^3\,dx\,ds \le C_* \varepsilon^{2/3}\]
 for $(x,t)\in\bar Q_{\frac 12}(x_0,S_*)$, and $0<r<\frac 14$. From this and using \eqref{condOnv}, we can apply Theorem \ref{thrm:BaPr2020no6} to conclude that $b_2\in C_\text{par}^{0,\si}(B_{\frac 13}(x_0)\times[0,S_*))$.

Note that $a$ is defined globally while $b_1$ and $b_2$ have the properties we want in 
$B_{\frac 13}(x_0)\times[0,S_*)$. We can obtain global fields, which we abusively label $b_1$ and $b_2$, from the solutions $b_1$ and $b_2$ constructed above by following the procedure immediately below \eqref{conditionLIST}. This proves the theorem.

\end{proof}

\section*{Acknowledgements}

The research of Z.~Bradshaw was supported in part by the NSF grant DMS-2307097 and the Simons Foundation via a TSM grant (formerly called a collaboration grant).

\end{document}